

\documentclass[12pt,oneside]{article}
\usepackage{times,amsmath,amsbsy,amssymb,amscd,mathrsfs,graphicx}
\usepackage{amssymb}
\usepackage{amsmath,bm}
\usepackage{amsmath,amsthm}

\usepackage{subcaption}
\usepackage{framed}
\usepackage{xcolor,color}
\usepackage[font=small,labelfont=md,textfont=it]{caption}
\usepackage{floatrow,float}
\usepackage[titletoc, title]{appendix}
\usepackage[colorlinks,linkcolor=blue,citecolor=blue]{hyperref}
\usepackage{longtable}
\usepackage{pgfplots}
\usepackage{diagbox}
\usepackage{booktabs,makecell,multirow}
\usepackage[capitalise, nosort]{cleveref}
\usepackage{algorithm}
\usepackage{algorithmic}
\usepackage{amsmath,bm}
\usepackage{cases}
\usepackage{geometry}
\usepackage{extarrows}
\usepackage{tcolorbox}
\usepackage{bbm}
\usepackage{syntonly}
\usepackage[figuresright]{rotating}
 \usepackage{epstopdf}

\geometry{left = 3cm,right=3cm,top =3cm,bottom = 3.5cm }

\crefname{equation}{}{}
\crefname{lem}{Lemma}{Lemmas}
\crefname{thm}{Theorem}{Theorems}

\newcommand{\whk}[0]{\widehat{\xi}_k}
\newcommand{\proj}[0]{ {\bf proj}}

\newcommand{\dd}{\,{\rm d}}

\newcommand{\R}{\,{\mathbb R}}

\newcommand{\brk}[0]{\bar{\xi}_k}
\newcommand{\brxk}[0]{\bar{x}_k}
\newcommand{\dual}[1]{\left\langle {#1} \right\rangle}
\newcommand{\rg}[0]{ {{\bf  range}\,}}
\newcommand{\diam}[1]{ {{\bf  diam}\left({#1}\right)}}

\newcommand{\argmin}[0]{ {\mathop{{\rm  argmin}}\,}}

\newcommand{\st}[0]{ {{\rm  s.t.}\,}}
\newcommand{\argmax}[0]{ {\mathop{{\rm  argmax}}\,}}

\newcommand{\nm}[1]{\left\lVert {#1} \right\rVert}
\newcommand{\snm}[1]{\left\lvert {#1} \right\rvert}

\newcommand{\ssnm}[1]
{
	\left\vert\kern-0.25ex
	\left\vert\kern-0.25ex
	\left\vert
	{#1}
	\right\vert\kern-0.25ex
	\right\vert\kern-0.25ex
	\right\vert
}


\makeatletter
\def\spher@harm#1{%
	\vbox{\hbox{%
			\offinterlineskip
			\valign{&\hb@xt@2\p@{\hss$##$\hss}\vskip.2ex\cr#1\crcr}%
		}\vskip-.36ex}%
}
\def\gshone{\spher@harm{.}}
\def\gshtwo{\spher@harm{.&.}}
\def\gshthree{\spher@harm{.&.&.}}
\let\gsh\spher@harm
\makeatother

\newtheorem{corollary}{Corollary}[section]

\newtheorem{definition}{Definition}[section]
\newtheorem{assum}{Assumption}

\newtheorem{lemma}{Lemma}[section]

\newtheorem{remark}{Remark}[section]

\newtheorem{theorem}{Theorem}[section]

\newtheorem{example}{Example}

\newcolumntype{I}{!{\vrule width 1,5pt}}
\newlength\savedwidth

\newlength\savewidth

\newcounter{mnote}
\setcounter{mnote}{0}

\let\oldmarginpar\marginpar
\renewcommand\marginpar[1]
{\-\oldmarginpar[\raggedleft\footnotesize #1]{\raggedright\footnotesize #1}}

\makeatletter\def\@captype{table}\makeatother

\newfloatcommand{capbtabbox}{table}[][\FBwidth]
\floatsetup[table]{captionskip=2pt}

\usepackage[T1]{fontenc}
\usepackage[utf8]{inputenc}
\usepackage{authblk}
\usepackage[all]{xy}

\newcommand{\conv}[1]{{\bf conv}\left\{ {#1} \right\}}

\usepackage{graphicx}
\usepackage{tikz}
\tikzset{elegant/.style={smooth,thick,samples=50,black}}
\tikzset{eaxis/.style={->,>=stealth}}

\usepackage[misc]{ifsym}

\begin{document}
\title{
  \Large \bf An accelerated primal-dual flow for linearly constrained multiobjective optimization\thanks{This work was supported by the National Natural Science Foundation of China (Grant Nos. 12401402, 12431010, 11991024), NSFC-RGC (Hong Kong) Joint Research Program  (Grant No. 12261160365), the Science and Technology Research Program of Chongqing Municipal Education Commission (Grant Nos. KJZD-K202300505, KJQN202401624), the Natural Science Foundation of Chongqing (Grant No. CSTB2024NSCQ-MSX0329) and the Foundation of Chongqing Normal University (Grant No. 22xwB020).}}

\author[,1,3]{Hao Luo\thanks{Email: luohao@cqnu.edu.cn; luohao@cqbdri.pku.edu.cn}}
\affil[1]{National Center for Applied Mathematics in Chongqing, Chongqing Normal University, Chongqing, 401331, China}
\affil[2]{School of Mathematical Sciences, Chongqing Normal University, Chongqing, 401331, China} 
\affil[3]{Chongqing Research Institute of Big Data, Peking University,  Chongqing, 401121, China}

\author[,2]{Qiaoyuan Shu\thanks{Email: shuqy@cque.edu.cn}}

\author[,1]{Xinmin Yang\thanks{Email: xmyang@cqnu.edu.cn}}

\date{\today}
\maketitle

\begin{abstract}
 In this paper, we propose a continuous-time primal-dual approach for linearly constrained multiobjective optimization problems. A novel dynamical model, called accelerated multiobjective primal-dual flow, is presented with a second-order equation for the primal variable and a first-order equation for the dual variable. It can be viewed as an extension of the accelerated primal-dual flow by Luo [arXiv:2109.12604, 2021] for the single objective case. To facilitate the convergence rate analysis, we introduce a new merit function, which motivates the use of the feasibility violation and the objective gap to measure the weakly Pareto optimality.  By using a proper Lyapunov function, we establish the exponential decay rate in the continuous level.
After that, we consider an implicit-explicit scheme, which yields an accelerated multiobjective primal-dual method with a quadratic subproblem, and prove the sublinear rates of the feasibility violation and the objective gap, under the convex case and the strongly convex case, respectively. Numerical results are provided to demonstrate the performance of the proposed method.
\end{abstract}


\section{Introduction}
\label{sec:intro}
Multiobjective optimization problems arise from many practical fields such as engineering \cite{rangaiah2013multi}, economics \cite{fliege2014robust} and machine learning \cite{liu2024stochastic}, which aim to
to identify the so-called Pareto efficiency with multiple conflicting objectives. In some applications, including the portfolio optimization \cite{armananzas2005multiobjective}, the urban bus scheduling 
\cite{ma2021shift}, and the energy saving optimization
\cite{cui2017multi},
there are also constraints, which significantly increase the difficulty of solving such problems. Among these, a special class is the linearly constrained multiobjective optimization problem (LCMOP), which involves linear constraints and reads as follows
\begin{equation}\label{eq:lcmop}
	\tag{LCMOP}
	\min_{x\in\Bbb{R}^n}\,F(x) = \left(		f_1(x),\cdots,f_m(x)\right)^\top\quad \st Ax = b,
\end{equation}
where $A\in\Bbb{R}^{r\times n}$ and $b\in\Bbb{R}^r$ are given and each $f_j:\Bbb{R}^n\to\Bbb{R}\cup\{+\infty\}$ is proper, closed and convex. Throughout, we assume that $\Omega:=\left\{x\in\Bbb{R}^n:\,Ax = b\right\}$  is nonempty. 

When $m=1$, \cref{eq:lcmop} reduces to the standard linearly constrained optimization problem, for which Luo \cite{luo2021accelerated} proposed an accelerated primal-dual (APD) flow 
\begin{equation}\label{eq:apd}
	\tag{APD}
	\left\{
	\begin{aligned}
		{}&		\gamma x'' + (\mu+\gamma)x' +A^\top\xi+ \nabla f(x)=0,\\
		{}&\theta\xi' = A(x+x') - b,
	\end{aligned}
	\right.
\end{equation}
where $\theta'=-\theta$ and $\gamma'=\mu-\gamma$ are tailored scaling parameters and $\mu\geq0$ is the convexity parameter of the objective $f$. This work \cite{luo2021accelerated} not only establishes the exponential decay rate for the continuous level but also develops several accelerated primal-dual methods based on proper implicit, semi-implicit and explicit numerical schemes.
Motivated by this, we introduce the following accelerated multiobjective primal-dual (AMPD) flow for solving \cref{eq:lcmop}:
\begin{equation}\label{eq:intro-ampd}
	\left\{
	\begin{aligned}
		{}&		\gamma x'' + (\mu+\gamma)x'+A^\top\xi +\proj_{C(x)}(-\gamma x''-A^\top\xi)= 0,\\
		{}&\theta\xi' = A(x+x') - b,
	\end{aligned}
	\right.
\end{equation}
where $\proj_{C(x)}(\cdot)$ denotes the orthogonal projection operator onto the convex hull $C(x):=\conv{\nabla f_1(x),\cdots,\nabla f_m(x)}$. In this paper, we aim to establish the convergence rates of the feasibility violation and the objective gap for both the continuous flow \cref{eq:intro-ampd} and its proper discretization, which leads to an accelerated multiobjective primal-dual method. To our best knowledge, this constitutes the first continuous-time primal-dual framework for \cref{eq:lcmop}. 
\subsection{Single objective optimization problems}
The dynamical approach provides an alternate perspective for solving unconstrained single objective optimization problems and has tight connections with first-order methods.
This can be dated back to Polyak \cite{polyak_methods_1964}, who investigated the well-known heavy ball model that connects to the heavy ball method. About fifty years later, Su et al. \cite{su_dierential_2016} discovered the continuous analogue to Nesterov's accelerated gradient method \cite{Nesterov1983} and provided a tailored Lyapunov analysis. In recent years, this topic has attracted more attentions and we refer to \cite{Attouch2018,chen2019first, chen_luo_unified_2021,Chen2025,li2024linear,luo_differential_2021,Shi2022,Siegel2019,wibisono_variational_2016} and references therein.

Notably, such a continuous-time approach has also been extended to the  primal-dual setting. Zeng et al. \cite{zeng2022dynamical} proposed a continuous-time primal-dual dynamical system which generalizes the continuous model of Nesterov acceleration to linearly constrained optimization problems. Right after, He et al. \cite{He2021} and Attouch et al. \cite{attouch2022fast} further extended the model in \cite{zeng2022dynamical} to separable problems. Following that, fast primal-dual first-order methods based on different continuous models were proposed by Bot et al. \cite{boct2023fast}, Chen and Wei \cite{Chen2023,chen_transformed_2023}, He et al. \cite{he2022fast} and Luo \cite{luo2021accelerated,luo_universal_2024,luo_unified_2025}, and more related works \cite{chen2025-trans,he_inertial_2022,he2022second,Hu2023,luo2022primal,luo2024accelerated,zhao2023accelerated}.
\subsection{Multiobjective optimization problems}
As for the multiobjective case, apart from the extensions of classical optimization methods (first-order or second order) \cite{Ansary2015,Assuncao2021,Chen2023e,chen2024scaled,Chen2023g,ElMoudden2020,fliege_steepest_2000,tanabe2019,Tanabe2023a}, there are also several dynamical models for solving unconstrained multiobjective optimization problems. 

In \cite{Attouch2014a}, Attouch and Goudou proposed a gradient like dynamic system
\[
x'+\proj_{C(x)}(0)=0,
\]
which can be regarded as the continuous-time counterpart of the multiobjective steepest descent method \cite{fliege_steepest_2000}. Later, Attouch and Garrigos \cite{attouch2015multiibjective} introduced a second order dynamic model called inertial multiobjective gradient system
\[
x''+\gamma x'+\proj_{C(x)}(0)=0,\quad \gamma>0.
\]
Very recently, Sonntag and Peitz \cite{Sonntag2024a} proposed a multiobjective inertial gradient-like dynamical system with asymptotic vanishing damping 
\begin{equation}\label{eq:mavd}	
	x''+\frac{\alpha}{t}x' +\proj_{C(x)}(-x'')=0,\quad \alpha>0,
	\tag{MAVD}
\end{equation}
and established the convergence rate $\mathcal{O}(1/t^{2})$ for a merit function. In addition, they  \cite{Sonntag2024} considered a discrete version incorporating Nesterov acceleration  and achieving a fast rate $\mathcal{O}(1/k^{2})$. Later, Bo\c{t} and Sonntag \cite{boct2026inertial} considered an extension of \cref{eq:mavd} called multiobjective Tikhonov regularized inertial gradient system (MTRIGS)
\begin{equation}\label{key}
	\tag{MTRIGS}
	x''+\frac{\alpha}{t^q}x'+\proj_{C(x)+\frac{\beta}{t^p}x}(-x'')=0,
\end{equation}
where $\alpha,\beta>0$ and $0<q\leq 1,\,0<p\leq 2$. From a continuous perspective, Luo et al. \cite{luo_accelerated_2025} derived the continuous-time limit of the multiobjective accelerated proximal gradient method proposed by Tanabe et al. \cite{Tanabe2023a}. Building on this, Luo et al. \cite{luo_accelerated_2025} further introduced
a novel accelerated multiobjective gradient (AMG) flow  with adaptive time scaling
\begin{equation}\label{eq:amg}
	\tag{AMG}
	\gamma x''+(\mu+\gamma)x'+\proj_{C(x)}(-\gamma x'')=0.
\end{equation}
They also developed an accelerated multiobjective gradient method with an adaptive residual restart strategy and established  the sublinear rate $\mathcal{O}(L/k^{2})$ and the linear rate $\mathcal{O}\big((1-\sqrt{\mu/L})^k\big)$  for convex and strongly convex problems, respectively.

For \cref{eq:lcmop}, however, it is rare to see efficient numerical methods from the literature. In \cite{el2018multiple,el2018new}, El Moudden and El Ghali developed the multiple reduced gradient algorithm, which is based on eliminating the basic variables from the linear constraint with the full row rank assumption on $A$. Later, Cocchi and Lapucci \cite{cocchi2020} extended the augmented Lagrangian method to the multiobjective setting with general nonlinear constraints. In this work, inspired by \cite{luo2021accelerated,luo_accelerated_2025}, we are interested in developing a continuous-time primal-dual approach for \cref{eq:lcmop}. Our main contributions are summarized as follows.
\begin{itemize}
	\vskip0.1cm
	\item {\bf New merit function} Firstly, based on the standard Lagrangian gap, we introduce a merit function (cf.\cref{eq:merit}) for \cref{eq:lcmop}, which is nonnegative and vanishes at weakly Pareto optimal points. Especially, we find that this merit function can be characterized by the feasibility violation and the objective gap. This motivates the concept of an  approximate solution to the weakly Pareto optimality.
	\vskip0.1cm
	\item{\bf Novel dynamical model} Secondly, we propose an accelerated multiobjective primal-dual flow. By using the Lyapunov analysis, we show that both the feasibility violation and the objective gap decrease with an exponential rate. With proper time rescaling, our AMPD flow results in a family of dynamical models including the continuous-time primal-dual accelerated model \cite{zeng2022dynamical} for linearly constrained single objective optimization and the \cref{eq:mavd} model  \cite{Sonntag2024a} for unconstrained multiobjective optimization.
	\vskip0.1cm
	\item{\bf Multiobjective primal-dual method} Thirdly, we consider an implicit-explicit discretization scheme for our AMPD flow. This leads to an accelerated multiobjective primal-dual method with a quadratic subproblem that arises from many multiobjective gradient methods \cite{fliege_steepest_2000,tanabe2019,Tanabe2023a}. We show that,  both the feasibility violation and the objective gap admit the convergence rates $\mathcal{O}(1/k)$ and $\mathcal{O}(1/k^{2})$ for convex and strongly convex cases, respectively.
\end{itemize}
\subsection{Organization}
The rest of this paper is organized as follows. In \cref{sec:pre}, we present some preliminary results that will be used throughout the paper. In \cref{sec:ampd,sec:rate-ampd}, we introduce an accelerated multiobjective primal-dual flow and establish the exponential decay rate via the Lyapunov analysis approach. In \cref{sec:semi-ampd}, we consider an implicit-explicit discretization scheme and establish the convergence rates of the feasibility violation and objective gap. In \cref{sec:num}, we present several numerical experiments  to demonstrate the performance of the proposed method. Finally, in \cref{sec:conclu}, we provide a conclusion of our work.

\section{Preliminary}
\label{sec:pre}
\subsection{Notation}
Let $\R^d$ be the $d$-dimensional Euclidean space with the usual inner product $\dual{\cdot,\cdot}$ and the induced norm  $\nm{\cdot}:=\sqrt{\dual{\cdot,\cdot}}$. For any nonempty subset $K\subset\R^d$, define $\diam{K}:=\sup\{\nm{x}:\,x\in K\}$. Given $A\in\R^{m\times n}$, denote by $\rg A$ the column space of $A$ and  $\sigma_{\min}^+(A)$ the smallest nonzero singular value of $A$. The $d$-dimensional unit simplex is
$
\Delta_d:=\left\{\lambda\in \R^d_{+}:\, \lambda_1+\cdots+\lambda_d=1\right\}
$, where $\R^d_{+}$ is the nonnegative orthant of $\R^d$. For a collection of vectors $\{p_j\}_{j=1}^m\subset\R^d$, its convex hull is $\conv{p_j}_{j=1}^m:=\{p=\lambda_1p_1+\cdots +\lambda_mp_m\in\R^d:\,\lambda\in\Delta_m\}$.

The collection of all continuous differentiable functions from $[0,\infty)$ to $\R^d$ is denoted by the set $C^1([0,\infty);\R^d)$, and
$AC([0,\infty);\R^d)$ consists of all absolutely continuous functions from $[0,\infty)$ to $\R^d$.
Let $\mathcal{F}_L^1(\R^d)$ be the set of all $C^1$ convex functions on $\mathbb{R}^d$ with $L$-Lipschitz continuous gradients. For any $f\in\mathcal{F}_L^1(\R^d)$ we have $\nm{\nabla f(x)-\nabla f(y)}\leq L\nm{x-y}$ and
\begin{equation}
	\label{eq:L-ineq}
	0\leq    f(y)-	f(x)-\dual{\nabla f(x),y-x}\leq \frac{L}{2}\nm{x-y}^2\quad\forall\,x,y\in\R^d.
\end{equation}
All $C^1$ functions on $\R^d$ that are $\mu$-strongly convex with some $\mu \geq 0$ constitute another important function class $\mathcal S_\mu^1(\R^d)$:
\begin{equation}\label{eq:mu-ineq}
	\frac{\mu}{2}\nm{x-y}^2\leq    f(y)-	f(x)-\dual{\nabla f(x),y-x}
	\quad\forall\,x,y\in\R^d.
\end{equation}
For later use, define
$
\mathcal S^{1,1}_{\mu,L} (\R^d):= \mathcal S_\mu^1(\R^d)\cap \mathcal{F}_L^1(\R^d)$. The result given below is trivial (cf. \cite[Lemma 2.1]{luo_accelerated_2025}) but provides a useful property for the subsequent convergence rate analysis.
\begin{lemma}\label{lem:gd-lem}
	If $f\in\mathcal S_{\mu,L}^{1,1}(\R^d)$, then
	\[
	-	\dual{\nabla f(y),x-z}\leq f(z)-f(x)-\frac{\mu}{2}\nm{y-z}^2+\frac{L}{2}\nm{y-x}^2\quad\forall\,x,y,z\in\R^d.
	\]
\end{lemma}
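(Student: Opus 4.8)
The plan is to exploit the additive decomposition of the left-hand side over the intermediate point $y$. First I would rewrite the single inner product as $-\dual{\nabla f(y),x-z}=\dual{\nabla f(y),z-y}+\dual{\nabla f(y),y-x}$, splitting it into one piece that pairs the gradient at $y$ against the displacement $z-y$ and another that pairs it against $y-x$. This splitting over the common point $y$ is really the only idea in the proof; once it is in place, each summand is controlled by exactly one of the two defining inequalities of the class $\mathcal S^{1,1}_{\mu,L}(\R^d)$, namely the lower bound from strong convexity and the upper bound from the Lipschitz gradient.

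For the first summand I would invoke the $\mu$-strong convexity inequality \eqref{eq:mu-ineq} at the pair $(y,z)$ (that is, with the roles $x\mapsto y$ and $y\mapsto z$), which rearranges to $\dual{\nabla f(y),z-y}\le f(z)-f(y)-\frac{\mu}{2}\nm{y-z}^2$. For the second summand I would use the right-hand, descent-type inequality in \eqref{eq:L-ineq} applied at the pair $(y,x)$, giving $\dual{\nabla f(y),y-x}\le f(y)-f(x)+\frac{L}{2}\nm{y-x}^2$. Adding these two bounds, the $f(y)$ terms cancel and the two left-hand sides telescope back to $-\dual{\nabla f(y),x-z}$ through the decomposition above, producing exactly $f(z)-f(x)-\frac{\mu}{2}\nm{y-z}^2+\frac{L}{2}\nm{y-x}^2$. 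Since $x,y,z$ are arbitrary, this yields the claim.

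There is no genuinely hard step: the inequality is a direct consequence of concatenating the strong-convexity lower bound with the Lipschitz upper bound across the auxiliary point $y$. The only point requiring care is the bookkeeping — matching the correct inequality (lower versus upper) to the correct summand and performing the variable substitutions consistently so that the cross terms $f(y)$ cancel rather than accumulate. Accordingly, before concatenating I would double-check the sign and orientation conventions in \eqref{eq:mu-ineq} and \eqref{eq:L-ineq}, since a misplaced role assignment there is the only way the argument could go wrong.
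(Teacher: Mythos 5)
Your proof is correct: the split $-\dual{\nabla f(y),x-z}=\dual{\nabla f(y),z-y}+\dual{\nabla f(y),y-x}$, bounding the first term by the strong-convexity inequality \eqref{eq:mu-ineq} at $(y,z)$ and the second by the descent inequality in \eqref{eq:L-ineq} at $(y,x)$, is exactly the standard argument behind this lemma, which the paper itself treats as trivial and delegates to \cite[Lemma 2.1]{luo_accelerated_2025}. The variable substitutions and signs all check out, and the $f(y)$ terms cancel as claimed.
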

Throughout, we impose the following assumption on the objectives of \cref{eq:lcmop}. Note that in our setting, the minimal strong convexity constant is not necessarily positive. In other words, we focus on not only the convex case $\mu=0$ but also the strongly convex case $\mu>0$, in a unified way.
\begin{assum}\label{assum:Lj-muj}
	Assume $f_j\in\mathcal S_{\mu_j,L_j}^{1,1}(\R^n)$ for $1\leq j\leq m$ with $0\leq \mu_j\leq L_j<+\infty$. For simplicity, we also denote $\mu:=\min_{1\leq j\leq m} \mu_j$ and $L:=\max_{1\leq j\leq m} L_j$.
\end{assum}
\subsection{Pareto optimality}
Given any $p,\,q\in\R^m$, we say $p$ is less than $q$ or equivalently $p<q$, if $p_j<q_j$ for all $1\leq j\leq m$. Likewise, the relation $p\leq q$ can be defined as well. A vector $y\in\Omega$ is called {\it dominated} by $x\in\Omega$ with respect to \cref{eq:lcmop} if
$F(x)\leq F(y)$ and $ F(x)\neq F(y)$.
Alternatively, when $y$ is dominated by $x$, we say $x$ dominates $y$.

A point $ x^* \in\Omega$ is called {\it  weakly Pareto optimal} or a {\it weakly Pareto optimal solution (point)} to \cref{eq:lcmop} if there does not exist
$x\in\Omega$ such that $F(x)<F( x^* )$. The weak Pareto set, denoted by $\mathcal{P}_w$, consists of all weakly Pareto optimal solutions, and the image $F(\mathcal P_w)$ of the weak Pareto set is called the {\it weak Pareto front}.

A point $ x^*\in\Omega$ is called {\it Pareto optimal} or a {\it Pareto optimal solution (point)} to \cref{eq:lcmop} if there does not exist $x\in\Omega$ that dominates $ x^* $. Denote by $\mathcal{P}$ the set of all Pareto optimal solutions, and its image $F(\mathcal P)$ is called the {\it Pareto front}. Clearly, we have $\mathcal P\subset \mathcal P_w$ by definition but the converse is not true in general.
\subsection{Optimality condition}
When each $f_j$ is smooth, the necessary optimality condition of \cref{eq:lcmop} is (cf.\cite[Theorem 3.21]{ehrgott2005multicriteria})
\begin{equation*}
	\label{eq:1st-opt-cond-conv}
	\begin{aligned}
		0={}A x^*-b,\quad
		0\in{}	A^\top\xi^*+\conv{\nabla f_j(x^*)}_{j=1}^m.
	\end{aligned}
\end{equation*}
This is equivalent to the Karush-Kuhn-Tucker (KKT) condition
\begin{equation}
	\label{eq:kkt}
	\begin{aligned}
		0={}A x^* -b,\quad
		0={}A^\top\xi^*+\proj_{C( x^*)}(-A^\top\xi^*),
	\end{aligned}
\end{equation}
where $C(x): = \conv{\nabla f_j(x)}_{j=1}^m$. If $(x^* ,\xi^*)$ satisfies the KKT condition \cref{eq:kkt}, then we call $ x^* $ a {\it Pareto critical} point of \cref{eq:lcmop}. All Pareto critical points constitute the {\it Pareto critical set}:
\[
\mathcal P_c := \{ x^*\in \Omega:\,  x^*\text{ is a Pareto critical point of \cref{eq:lcmop} }\}.
\]

Analogously to the single objective case ($m=1$), Pareto criticality is a necessary condition for the weak Pareto optimality. In the convex setting, the KKT condition \cref{eq:kkt} is also sufficient for the weak Pareto optimality. That is, for smooth and convex objectives, we have $\mathcal P_c=\mathcal P_w$; see  \cite[Corollary 3.23]{ehrgott2005multicriteria}.

\subsection{New merit function}
For $1\leq j\leq m$, denote by $Q_j(x,\xi): = f_j(x)+\dual{\xi,Ax-b}$ the usual Lagrangian function. For the single objective case $(m=1)$, the Lagrangian gap is usually used to measure the optimality of a pair $(x,\xi)$. For the multiobjective case \cref{eq:lcmop}, however, we shall consider a merit function that is nonnegative and attains zero only at weakly Pareto optimal solutions.

In this work, we introduce the following merit function
\begin{equation}\label{eq:merit}
	\Pi(x,\xi) := \sup_{z\in\Omega,\,\zeta\in\R^r} \min_{1\leq j\leq m} \pi_j(x,\xi;z,\zeta)\quad\forall\,(x,\xi)\in\R^n\times\R^r,
\end{equation}
where $
\pi_j(x,\xi;z,\zeta): =Q_j(x,\zeta)-Q_j(z,\xi)$.
Clearly, when $x\notin\Omega$, we have for any $z\in\Omega$,
\[
\sup_{\zeta\in\R^r}  \min_{1\leq j\leq m} \pi_j(x,\xi;z,\zeta) = \min_{1\leq j\leq m} [f_j(x)-f_j(z)]+ \sup_{\zeta\in\R^r}   \dual{\zeta,Ax-b}= +\infty,
\]
which implies $\Pi(x,\xi)=+\infty$ for all $(x,\xi)\in\R^n\backslash\Omega\times\R^r$.
On the other hand, for any $x\in\Omega$, we find $\min_{1\leq j\leq m} \pi_j(x,\xi;z,\zeta) =\min_{1\leq j\leq m}  [f_j(x) -f_j(z) ]$ and taking $z=x$ gives $\Pi(x,\xi)  \geq 0$. Hence, we conclude that $\Pi$ is nonnegative. Moreover, we can show that $\Pi$ attains zero at the weak Pareto points.
\begin{lemma}\label{thm:opt-Pi}
	The merit function $\Pi:\R^n\times\R^r\to\R\cup\{+\infty\}$ defined by \cref{eq:merit} is nonnegative and lower semicontinuous. Moreover, $ x^* \in\mathcal P_w$ if and only if there exists $\xi^*\in\R^r$ such that $\Pi(x^* ,\xi^*) = 0$.
\end{lemma}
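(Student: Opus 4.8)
The plan is to treat the three assertions separately. Nonnegativity has already been verified in the discussion preceding the statement, so I would concentrate on the lower semicontinuity and on the characterization of weak Pareto optimality.

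For lower semicontinuity, the key observation is that for each fixed pair $(z,\zeta)\in\Omega\times\R^r$, the map $(x,\xi)\mapsto \min_{1\le j\le m}\pi_j(x,\xi;z,\zeta)$ is continuous. Indeed, writing out $\pi_j(x,\xi;z,\zeta)=f_j(x)+\dual{\zeta,Ax-b}-f_j(z)-\dual{\xi,Az-b}$, each $\pi_j$ is the sum of the continuous term $f_j(x)$ with terms that are affine in $(x,\xi)$, hence continuous in $(x,\xi)$; a finite minimum of continuous functions is continuous. Since $\Pi$ is by definition the pointwise supremum over $(z,\zeta)$ of this family, and a supremum of lower semicontinuous functions is always lower semicontinuous, the lower semicontinuity of $\Pi$ follows at once.

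The heart of the argument is a simplification of $\Pi$ on the feasible set. I would first record that, as already noted, $\Pi(x,\xi)=+\infty$ whenever $x\notin\Omega$. When $x\in\Omega$, the cross terms $\dual{\zeta,Ax-b}$ and $\dual{\xi,Az-b}$ (the latter for $z\in\Omega$) both vanish, so $\pi_j(x,\xi;z,\zeta)=f_j(x)-f_j(z)$ and therefore
\[
\Pi(x,\xi)=\sup_{z\in\Omega}\ \min_{1\le j\le m}\left[f_j(x)-f_j(z)\right],
\]
which is independent of both $\xi$ and $\zeta$. This decoupling is the main structural fact, and it is what reduces the ``there exists $\xi^*$'' in the statement to a condition on $x^*$ alone.

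With this in hand the equivalence is immediate, up to bookkeeping. For the forward direction, if $x^*\in\mathcal P_w$ then $x^*\in\Omega$ and no $z\in\Omega$ satisfies $F(z)<F(x^*)$; equivalently $\min_{1\le j\le m}[f_j(x^*)-f_j(z)]\le0$ for every $z\in\Omega$, so the supremum above is $\le 0$, while the choice $z=x^*$ shows it is $\ge 0$, whence $\Pi(x^*,\xi^*)=0$ for any $\xi^*$. Conversely, if $\Pi(x^*,\xi^*)=0$ for some $\xi^*$, then finiteness forces $x^*\in\Omega$, and the vanishing supremum means there is no $z\in\Omega$ with $\min_{1\le j\le m}[f_j(x^*)-f_j(z)]>0$, i.e.\ no $z\in\Omega$ with $F(z)<F(x^*)$, which is exactly weak Pareto optimality. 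The only point requiring care is translating the strict inequality $F(z)<F(x^*)$ into the sign of the inner minimum and correctly excluding the case $x^*\notin\Omega$ on both sides; I expect no serious obstacle here beyond separating the two regimes $x\in\Omega$ and $x\notin\Omega$.
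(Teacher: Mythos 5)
Your proposal is correct and follows essentially the same route as the paper: nonnegativity and the value $+\infty$ off the feasible set are taken from the preceding discussion, lower semicontinuity comes from viewing $\Pi$ as a supremum of continuous functions of $(x,\xi)$ (the paper simply cites Bauschke--Combettes for this), and the equivalence is obtained by reducing $\Pi$ on $\Omega$ to $\sup_{z\in\Omega}\min_{1\le j\le m}[f_j(x)-f_j(z)]$. The only cosmetic difference is in the converse direction, where the paper re-derives $Ax^*=b$ inside the proof by letting $\zeta$ range over $\R^r$, while you invoke the already-established fact that $\Pi\equiv+\infty$ off $\Omega$ --- the same argument, packaged differently.
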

\begin{proof}
	The nonnegativity has been verified by the above discussions, and by \cite[Lemmas 1.26 and 1.29]{Bauschke2011}, $\Pi$ is lower semicontinuous since $f_j$ is continuous differentiable and as well as closed (i.e. lower semicontinuous) for all $1\leq j\leq m$.
	
	It is easy to check that for $x^*\in\mathcal P_w$ and any $\xi^*\in\R^r$, we have $\Pi(x^*,\xi^*) = 0$. Let us focus on the reverse side. Assume there is a pair $ (x^*,\xi^*)\in\R^n\times \R^r$ such that $\Pi(x^* ,\xi^*)=0$. Then for all $z\in\Omega$ and $\zeta\in\R^r$, it is clear that
	\begin{equation}\label{eq:z-zeta}
		\min_{1\leq j\leq m}[f_j(x^*)-f_j(z)]+\dual{\zeta,Ax^*-b}=	 		\min_{1\leq j\leq m}\pi_j(x^*,\xi^*;z,\zeta) \leq \Pi(x^* ,\xi^*)= 0.
	\end{equation}
	Letting $\zeta=0$ in \cref{eq:z-zeta} gives  $\min_{1\leq j\leq m}[f_j( x^*)-f_j(z)]\leq  0$ for all $z\in\Omega$.
	Then by fixing $z=z_0\in\Omega$, it follows from \cref{eq:z-zeta} that
	\[
	\dual{\zeta,Ax^*-b}\leq -    \min_{1\leq j\leq m}[f_j(x^*)-f_j(z_0)]<+\infty\quad \forall \,\zeta\in \R^r.
	\]
	Since $\zeta\in \R^r$ is arbitrary, we get $Ax^*-b=0$, which implies $x^*\in\Omega$. Thus, $ x^*\in\Omega$ is a weakly Pareto point to \cref{eq:lcmop}. This completes the proof of this lemma.
\end{proof}

For the single objective case ($m=1$), we also use the feasibility violation $\nm{Ax-b}$ and the objective gap $\snm{f(x)-f^*}$ to measure the optimality of a point $x$. In the multiobjective setting, based on \cref{thm:opt-Pi}, we follow the similar idea and provide an alternate characterization of the weakly Pareto optimality.
\begin{lemma}\label{lem:Ux}
	A point $x^*$ is weakly Pareto optimal if and only if $x^*\in\Omega$ and $	U(x^*)=0$, where the objective gap function $U:\R^n\to\R\cup\{+\infty\}$ is defined by
	\begin{equation}\label{eq:obj-gap}
		U(x): = \sup_{z\in\Omega}\min_{1\leq j\leq m}  [f_j(x) -f_j(z)].
	\end{equation}
\end{lemma}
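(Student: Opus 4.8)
The plan is to give a short self-contained argument by unwinding the definitions, but I would first record a structural shortcut that ties $U$ to the merit function $\Pi$ from \cref{eq:merit} and lets me reuse \cref{thm:opt-Pi}. Observe that whenever both $x^*\in\Omega$ and $z\in\Omega$, the two coupling terms $\dual{\zeta,Ax^*-b}$ and $\dual{\xi,Az-b}$ inside $\pi_j(x^*,\xi;z,\zeta)=Q_j(x^*,\zeta)-Q_j(z,\xi)$ vanish, so that $\pi_j(x^*,\xi;z,\zeta)=f_j(x^*)-f_j(z)$ is independent of $\xi$ and $\zeta$. Taking the supremum over $z\in\Omega$ and $\zeta\in\R^r$ then gives $\Pi(x^*,\xi)=U(x^*)$ for \emph{every} $\xi\in\R^r$, provided $x^*\in\Omega$. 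Since membership $x^*\in\mathcal P_w$ already forces $x^*\in\Omega$, the equivalence $x^*\in\mathcal P_w\iff\big(x^*\in\Omega\text{ and }U(x^*)=0\big)$ drops straight out of \cref{thm:opt-Pi}. I would nonetheless present the direct proof below, since it is what makes the role of the objective gap transparent.

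First I would establish the elementary bound $U(x^*)\geq 0$ for any $x^*\in\Omega$: inserting the feasible competitor $z=x^*$ into the supremum defining \cref{eq:obj-gap} yields $\min_{1\leq j\leq m}[f_j(x^*)-f_j(x^*)]=0$, hence $U(x^*)\geq 0$. This reduces the whole statement to deciding exactly when the supremum is strictly positive. Since weak Pareto optimality requires $x^*\in\Omega$ by definition, and the right-hand side also requires $x^*\in\Omega$, it suffices to treat the feasible case and show $U(x^*)=0\iff x^*\in\mathcal P_w$ under $x^*\in\Omega$.

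For the forward implication, I would assume $x^*\in\mathcal P_w$ and argue by contradiction: if $U(x^*)>0$, then by the definition of the supremum there is some $z\in\Omega$ with $\min_{1\leq j\leq m}[f_j(x^*)-f_j(z)]>0$, and because the index set is finite this forces $f_j(z)<f_j(x^*)$ for every $j$, i.e.\ $F(z)<F(x^*)$, contradicting weak Pareto optimality; together with $U(x^*)\geq 0$ this gives $U(x^*)=0$. For the reverse implication I would assume $x^*\in\Omega$ with $U(x^*)=0$ and suppose $x^*\notin\mathcal P_w$; then some $x\in\Omega$ has $F(x)<F(x^*)$, so $\min_{1\leq j\leq m}[f_j(x^*)-f_j(x)]>0$ by finiteness again, and choosing $z=x$ in \cref{eq:obj-gap} yields $U(x^*)>0$, a contradiction.

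The single point deserving care—and the only place where anything can go wrong—is the interplay between the supremum, which need not be attained, and the strict vector inequality $<$. The forward direction uses that a strictly positive supremum is witnessed by \emph{some} feasible $z$ with strictly positive minimal gap, while both directions rely on the finiteness of $\{1,\dots,m\}$ so that a minimum of finitely many strictly positive numbers remains strictly positive. Beyond handling this carefully, I expect no genuine obstacle.
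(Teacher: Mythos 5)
Your proposal is correct. Your opening paragraph is in fact exactly the paper's proof: the authors simply observe that $\Pi(x,\xi)=+\infty$ for $x\notin\Omega$ and $\Pi(x,\xi)=U(x)$ for $x\in\Omega$, and then read the equivalence off \cref{thm:opt-Pi}. The direct argument you then present as your main proof is a genuinely different, self-contained route: it bypasses the merit function $\Pi$ and \cref{thm:opt-Pi} entirely, instead checking $U(x^*)\geq 0$ via the competitor $z=x^*$ and running the two contrapositive implications through the observation that $\min_{1\leq j\leq m}[f_j(x^*)-f_j(z)]>0$ is equivalent to the strict dominance $F(z)<F(x^*)$. Both directions are handled correctly, including the only delicate point you flag (a strictly positive supremum is witnessed by some feasible $z$, even if the supremum is not attained, and a minimum over the finite index set of strictly positive quantities stays strictly positive). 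What the paper's reduction buys is brevity and consistency with the surrounding development of $\Pi$; what your direct argument buys is independence from \cref{thm:opt-Pi} and a transparent link between the sign of the objective gap and the definition of weak Pareto optimality. Either version is acceptable.
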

\begin{proof}
	Observe that $	\Pi(x,\xi) =+\infty$ for $x\notin\Omega$ and $	\Pi(x,\xi)=U(x)$ for $x\in\Omega$.
	Therefore, by \cref{thm:opt-Pi}, this implies immediately that $x^*\in\mathcal P_w\Longleftrightarrow \Pi(x^* ,\xi^*) = 0$ with some $\xi^*\in\R^r\Longleftrightarrow x^*\in\Omega$ and $	U(x^*)=0$.
\end{proof}

According to \cref{lem:Ux}, for any feasible point $x\in\Omega$, it is sufficient to focus on the objective gap function $\snm{U(x)}$. Motivated by this, we introduce the concept of a weakly Pareto $\epsilon$-approximation solution to \cref{eq:lcmop}.
\begin{definition}\label{def:approx}
	Let $\epsilon>0$ be given.	We call $x^\#\in\R^n$ a weakly Pareto $\epsilon$-approximation solution to \cref{eq:lcmop} if
	\[
	\nm{Ax^\#-b}\leq M_1\epsilon\quad \text{and}\quad \snm{U(x^\#)}\leq M_2\epsilon,
	\]
	where $M_1$ and $M_2$ are two generic positive constants independent on $\epsilon$ and $x^\#$.
\end{definition}

For later use, we shall restrict the objective gap function \cref{eq:obj-gap} to bounded level sets. This can be done with the following two assumptions.
\begin{assum}\label{assum:j0}
	For each $1\leq j\leq m$  the level set $\mathcal L_{f_{j}}(\alpha)=\{x\in\R^n:\,f_{j}(x)\leq \alpha\}$ is bounded for all $\alpha\in\R$.
	In other words, the quantity $	R(\alpha):=\max_{1 \leq j \leq m}R_{j}(\alpha)$ is finite, where $R_{j}(\alpha): = \sup\{\nm{x}:\,x\in \mathcal L_{f_{j}}(\alpha)\}<+\infty$.
\end{assum}
\begin{assum}\label{assum:alpha-pw}
	There exists ${\bm{\alpha}}_*\in\R^n$ such that $\mathcal L_F(\bm{\alpha}_*)\cap\Omega\neq\emptyset$. In addition, let  $\bm{\alpha}\in\R^n$ be such that $\mathcal L_F(\bm{\alpha})\cap\Omega\neq\emptyset$, then for every $x\in \mathcal L_F(\bm{\alpha})\cap\Omega$, there exists at least one  $x^*\in P_w\cap \mathcal L_F(F(x))$.
\end{assum}
\begin{lemma}
	\label{lem:u0-}
	Let $\bm{\alpha}\in\R^n$ be such that  $\mathcal L_F(\bm{\alpha})\cap\Omega\neq\emptyset$. Then we have the following.
	\begin{itemize}
		\item[(i)] Under \cref{assum:j0}, we have
		\begin{equation}\label{eq:D-alpha}
			D(\bm{\alpha}):=\sup_{F^*\in F(P_w\cap \mathcal L_F(\bm{\alpha}))}\inf_{z\in F^{-1}(F^*)\cap\Omega}\nm{z}<+\infty.
		\end{equation}
		\item[(ii)] Under \cref{assum:alpha-pw}, for any  $x\in\mathcal L_F(\bm{\alpha})$, we have
		\begin{equation}\label{eq:Pi-alter}
			U(x) =
			\sup_{F^*\in F(P_w\cap \mathcal L_F(\bm{\alpha}))}\inf_{z\in F^{-1}(F^*)\cap\Omega}\min_{1\leq j\leq m} [f_j(x) -f_j(z)].
		\end{equation}
	\end{itemize}
\end{lemma}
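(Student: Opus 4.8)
The plan is to treat the two parts separately, and in both to exploit the elementary fact that the inner quantity $\min_{1\le j\le m}[f_j(x)-f_j(z)]$ depends on $z$ only through the objective value $F(z)$, so that it is constant along each fibre $F^{-1}(F^*)$. For part (i), I would argue through the representatives of the index set: for each $F^*\in F(P_w\cap\mathcal L_F(\bm\alpha))$ there is, by definition, a point $x^*\in P_w\cap\mathcal L_F(\bm\alpha)$ with $F(x^*)=F^*$. Since $P_w\subset\Omega$, this $x^*$ lies in $F^{-1}(F^*)\cap\Omega$, so the inner infimum runs over a nonempty set and is bounded above by $\nm{x^*}$. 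The membership $x^*\in\mathcal L_F(\bm\alpha)$ means $f_j(x^*)\le\alpha_j$ for every $j$, hence $x^*\in\mathcal L_{f_j}(\alpha_j)$ and, by \cref{assum:j0}, $\nm{x^*}\le\min_{1\le j\le m}R_j(\alpha_j)<+\infty$. As this bound is uniform in $F^*$, taking the supremum yields $D(\bm\alpha)\le\min_{1\le j\le m}R_j(\alpha_j)<+\infty$, which is \cref{eq:D-alpha}.

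For part (ii), the first step is to collapse the inner infimum in \cref{eq:Pi-alter}. Because $\min_{1\le j\le m}[f_j(x)-f_j(z)]$ depends on $z$ only through $F(z)=F^*$, it is constant over $z\in F^{-1}(F^*)$, and this fibre meets $\Omega$ (it contains the representative $x^*$ found above). Hence the right-hand side of \cref{eq:Pi-alter} reduces to $V(x):=\sup_{x^*\in P_w\cap\mathcal L_F(\bm\alpha)}\min_{1\le j\le m}[f_j(x)-f_j(x^*)]$, and it remains to prove $U(x)=V(x)$. The inequality $V(x)\le U(x)$ is immediate, since $V$ is the supremum of the same integrand over the smaller set $P_w\cap\mathcal L_F(\bm\alpha)\subset\Omega$.

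The crux is the reverse inequality $U(x)\le V(x)$, which is where \cref{assum:alpha-pw} enters. I would fix an arbitrary competitor $z\in\Omega$ and show $\min_{1\le j\le m}[f_j(x)-f_j(z)]\le V(x)$, distinguishing two cases. If $z\in\mathcal L_F(\bm\alpha)$, then $z\in\mathcal L_F(\bm\alpha)\cap\Omega$ and \cref{assum:alpha-pw} supplies $x^*\in P_w\cap\mathcal L_F(F(z))$; the chain $f_j(x^*)\le f_j(z)\le\alpha_j$ for all $j$ shows both that $x^*\in P_w\cap\mathcal L_F(\bm\alpha)$ and that $\min_{1\le j\le m}[f_j(x)-f_j(x^*)]\ge\min_{1\le j\le m}[f_j(x)-f_j(z)]$, so this competitor is dominated by $V(x)$. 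If instead $z\notin\mathcal L_F(\bm\alpha)$, some coordinate satisfies $f_{j_0}(z)>\alpha_{j_0}\ge f_{j_0}(x)$, forcing $\min_{1\le j\le m}[f_j(x)-f_j(z)]<0$; applying \cref{assum:alpha-pw} to $x$ itself produces $x^*\in P_w$ with $F(x^*)\le F(x)\le\bm\alpha$, so $x^*\in P_w\cap\mathcal L_F(\bm\alpha)$ and $V(x)\ge\min_{1\le j\le m}[f_j(x)-f_j(x^*)]\ge0$, again dominating the competitor. Taking the supremum over $z\in\Omega$ gives $U(x)\le V(x)$, completing \cref{eq:Pi-alter}.

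I expect the main obstacle to be this reverse inequality, and within it the bookkeeping that the weakly Pareto point returned by \cref{assum:alpha-pw} stays inside the truncating level set $\mathcal L_F(\bm\alpha)$, since only then is it an admissible competitor for $V$; this hinges on the monotonicity $F(x^*)\le F(z)\le\bm\alpha$ in the first case and $F(x^*)\le F(x)\le\bm\alpha$ in the second. The second case is also the precise point at which feasibility of the evaluation point $x$ is used (so that \cref{assum:alpha-pw} is applicable to $x$ and yields $V(x)\ge0$), and I would make this dependence explicit rather than rely on mere membership $x\in\mathcal L_F(\bm\alpha)$.
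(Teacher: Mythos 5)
Your proof of part~(i) and the first half of part~(ii) follow the paper's route exactly: the fibre $F^{-1}(F^*)\cap\Omega$ contains a weakly Pareto representative whose norm is controlled via \cref{assum:j0}, and the inner infimum collapses because the integrand depends on $z$ only through $F(z)$, after which \cref{assum:alpha-pw} identifies $V(x):=\sup_{z\in P_w\cap\mathcal L_F(\bm{\alpha})}\min_j[f_j(x)-f_j(z)]$ with the supremum over $\mathcal L_F(\bm{\alpha})\cap\Omega$. Where you go beyond the paper is the last step: the paper asserts that for $x\in\mathcal L_F(\bm{\alpha})$ the supremum over $\mathcal L_F(\bm{\alpha})\cap\Omega$ equals the supremum over all of $\Omega$ with the words ``it is evident that'', whereas you carry out the case analysis over competitors $z\in\Omega$ explicitly. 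Your handling of competitors inside the level set is correct and matches the paper's use of \cref{assum:alpha-pw}.

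The issue you flag in your second case is, however, a genuine gap in the statement as written, not a bookkeeping point you can discharge by ``making the dependence explicit.'' To dominate a competitor $z\in\Omega\setminus\mathcal L_F(\bm{\alpha})$ you need $V(x)\geq 0$, which you get by applying \cref{assum:alpha-pw} to $x$ itself; that requires $x\in\Omega$, while the lemma claims the identity for every $x\in\mathcal L_F(\bm{\alpha})$ and is later invoked (in \cref{thm:est-Uxt,thm:bound-fes}) at the iterates $x(t)$ and $x_k$, which are in general infeasible. So restricting to $x\in\mathcal L_F(\bm{\alpha})\cap\Omega$ would not serve the downstream results. For infeasible $x$ your argument only yields $U(x)\leq\max\{V(x),0\}$, since competitors outside the level set contribute strictly negative values but need not be dominated by a possibly negative $V(x)$; and indeed the claimed equality can fail there (take $f_1(x)=x_1^2$, $f_2(x)=(x_2-10)^2$, $\Omega=\{(s,s):s\in\R\}$, $\bm{\alpha}=(1,100)$, $x=(1,1.5)$: then $U(x)\approx-1.07$ while $V(x)=-8.75$, with \cref{assum:alpha-pw} satisfied). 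The practical fix is to prove exactly what your case analysis gives for all $x\in\mathcal L_F(\bm{\alpha})$, namely $V(x)\leq U(x)\leq\max\{V(x),0\}$; since the right-hand sides of the subsequent estimates are nonnegative, this one-sided bound is all that \cref{thm:est-Uxt,thm:bound-fes} actually need.
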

\begin{proof}
	Notice that $\mathcal{L}_{F}(\bm{\alpha})=\cap_{j=1}^{m}\mathcal{L}_{f_j}(\bm{\alpha}_j)$. According to \cref{assum:j0}, the level set $\mathcal{L}_{f_j}(\bm{\alpha}_j)$ is bounded for all $1\leq j\leq m$ with $R_j(\bm{\alpha}_j)<+\infty$. Then for all $z\in P_w\cap\mathcal{L}_{F}(\bm{\alpha})$, we have $\|z\|\leq\min_{1 \leq j \leq m}R_j(\bm{\alpha}_j)<+\infty$. This verifies the first claim (i).
	
	To prove the second one (ii), let us start  from the right side of \cref{eq:Pi-alter}:
	\begin{equation}\label{eq_2}
		\begin{aligned}
			{}&\sup_{F^*\in F(P_w\cap \mathcal L_F(\bm{\alpha}))}\inf_{z\in F^{-1}(F^*)\cap\Omega}\min_{1\leq j\leq m}[ f_j(x) -f_j(z) ]\\
			={}& \sup_{F^*\in F(P_w\cap \mathcal L_F(\bm{\alpha}))}\min_{1\leq j\leq m} [f_j(x)-f_j^*]
			= \sup_{z\in P_w\cap \mathcal L_F(\bm{\alpha})}\min_{1\leq j\leq m} [f_j(x)-f_j(z)].
		\end{aligned}
	\end{equation}
	By \cref{assum:alpha-pw}, for all $z\in \mathcal{L}_F(\bm{\alpha})\cap \Omega$, there exists $z^{*}\in P_w$ such that $F(z^*)\leq F(z)$. This implies the following identity
	\[
	\sup_{z\in P_w\cap \mathcal L_F(\bm{\alpha})}\min_{1\leq j\leq m} [f_j(x)-f_j(z)]
	= \sup_{z\in \mathcal L_F(\bm{\alpha})\cap \Omega}\min_{1\leq j\leq m}[f_j(x)-f_j(z)].
	\]
	In addition, for any $x\in \mathcal{L}_F(\bm{\alpha})$, it is evident that
	\begin{equation}\label{eq_3}
		\sup_{z\in \mathcal L_F(\bm{\alpha})\cap \Omega}\min_{1\leq j\leq m} [f_j(x)-f_j(z)]=\sup_{z\in \Omega}\min_{1\leq j\leq m} [f_j(x)-f_j(z)]=U(x).
	\end{equation}
	Hence, combining (\ref{eq_2}) and (\ref{eq_3}) yields \cref{eq:Pi-alter} and thus completes the proof.
\end{proof}

To the end of this section, we provide a useful lower bound of the objective gap.
\begin{lemma}\label{lem:lw-bd-U}
Let $K\subset\R^n$ be such that $\diam{K}<\infty$. Then for any $x\in K$ we have
	\[
	U(x)\geq -E_1(K,A,b)\nm{Ax-b}/\sigma_{\min}^+(A),
	\]
	where the positive constant is defined by $E_1(K,A,b):=E_2(K,A,b)\cdot \max_{1\leq j\leq m} L_j+\max_{1 \leq j \leq m} \nm{\nabla f_j(0)}$ with $E_2(K,A,b):= (1+\nm{A}/\sigma_{\min}^+(A))\diam{K} +\nm{b}/\sigma_{\min}^+(A)$.
\end{lemma}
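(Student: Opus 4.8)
The plan is to bound the supremum defining $U(x)$ from below by evaluating the inner expression at a single, carefully chosen feasible point, namely the orthogonal projection $\bar{x}:=\proj_{\Omega}(x)$ of $x$ onto the feasible affine subspace $\Omega$. Since $\bar{x}\in\Omega$, dropping the supremum immediately gives
\[
U(x)=\sup_{z\in\Omega}\min_{1\leq j\leq m}[f_j(x)-f_j(z)]\geq \min_{1\leq j\leq m}[f_j(x)-f_j(\bar{x})],
\]
so it suffices to bound each difference $f_j(x)-f_j(\bar{x})$ from below and then track the minimum over $j$.

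First I would control the distance $\nm{x-\bar{x}}$ by the feasibility violation. Because $\bar{x}$ is the projection onto $\Omega=\{z:Az=b\}$, the residual $x-\bar{x}$ is orthogonal to the direction space $\ker A$, hence lies in $\rg A^\top=(\ker A)^\perp$, and satisfies $A(x-\bar{x})=Ax-b$. On the row space the map $A$ is bounded below by its smallest nonzero singular value, so $\nm{Ax-b}=\nm{A(x-\bar{x})}\geq \sigma_{\min}^+(A)\nm{x-\bar{x}}$, which yields
\[
\nm{x-\bar{x}}\leq \nm{Ax-b}/\sigma_{\min}^+(A).
\]
This is the geometric heart of the estimate and the step where the most care is required, since it hinges on the residual lying in the row space of $A$; everything downstream is a routine chain of norm estimates.

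Next I would invoke convexity of each $f_j$ to write $f_j(x)-f_j(\bar{x})\geq \dual{\nabla f_j(\bar{x}),x-\bar{x}}\geq -\nm{\nabla f_j(\bar{x})}\,\nm{x-\bar{x}}$, so that $\min_{j}[f_j(x)-f_j(\bar{x})]\geq -\max_{j}\nm{\nabla f_j(\bar{x})}\cdot\nm{x-\bar{x}}$. It then remains to bound the gradient norms at $\bar{x}$ uniformly. Using the $L_j$-Lipschitz continuity of $\nabla f_j$ from \cref{assum:Lj-muj}, I would estimate $\nm{\nabla f_j(\bar{x})}\leq L_j\nm{\bar{x}}+\nm{\nabla f_j(0)}$, and bound $\nm{\bar{x}}\leq \nm{x}+\nm{x-\bar{x}}$. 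The assumption $x\in K$ then gives $\nm{x}\leq\diam{K}$ together with $\nm{Ax-b}\leq\nm{A}\diam{K}+\nm{b}$, whence, combining with the projection bound,
\[
\nm{\bar{x}}\leq \left(1+\nm{A}/\sigma_{\min}^+(A)\right)\diam{K}+\nm{b}/\sigma_{\min}^+(A)=E_2(K,A,b).
\]

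Assembling these pieces, $\max_{j}\nm{\nabla f_j(\bar{x})}\leq E_2(K,A,b)\max_{j}L_j+\max_{j}\nm{\nabla f_j(0)}=E_1(K,A,b)$, and feeding this together with the projection bound on $\nm{x-\bar{x}}$ into the reduction above yields $U(x)\geq -E_1(K,A,b)\nm{Ax-b}/\sigma_{\min}^+(A)$, as claimed. In summary, the whole argument is a sequence of standard convexity and singular-value estimates; the only genuinely substantive ingredient is the choice of $\bar{x}$ as the feasibility projection, paired with the row-space singular-value inequality that converts the feasibility violation into a distance bound.
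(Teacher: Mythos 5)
Your proof is correct and follows essentially the same route as the paper: your projection point $\bar{x}=\proj_{\Omega}(x)$ coincides with the paper's explicit choice $z_x=x-A^+(Ax-b)$ via the Moore--Penrose inverse, and the subsequent chain of estimates (distance bound via $\sigma_{\min}^+(A)$, gradient bound via Lipschitz continuity, and the convexity inequality) is identical. The only cosmetic difference is that you argue the distance bound geometrically through the row-space orthogonality of the residual, whereas the paper reads it off from $\nm{A^+}\leq 1/\sigma_{\min}^+(A)$.
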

\begin{proof}
	Let $A^+$ be the Moore--Penrose inverse of $A$, then we have $AA^+A=A$ and $AA^+b = b$ for all $b\in\rg A$. For any $x\in K$, let us consider $z_x=x-A^+(Ax-b)$. Note that we have $
	Az_x=A(x-A^+(Ax-b))=Ax-AA^+(Ax-b)=b$ and it follows from the fact $\nm{A^+}\leq 1/\sigma_{\min}^+(A)$ that
	\begin{equation}\label{eq:st-zx}
		\nm{z_x}\leq (1+\nm{A}/\sigma_{\min}^+(A))\nm{x}+\nm{b}/\sigma_{\min}^+(A)\leq E_2(K,A,b).
	\end{equation}
	In addition, we find that
	\[
	\|x-z_x\|=\|A^+(Ax-b)\|\leq\|A^+\|\|Ax-b\|\leq \|Ax-b\|/\sigma_{\min}^+(A),
	\]
	and by the triangle inequality and the $L_j$-Lipschitz continuity of $\nabla f_j$,
	\[
	\begin{aligned}
		\max_{1 \leq j \leq m} \|\nabla f_j(z_x)\| \leq {}&	
		\max_{1 \leq j \leq m} \left[\|\nabla f_j(z_x)-\nabla f_j(0)\|+\nm{\nabla f_j(0)}\right]\\
		\leq {}&\nm{z_x}\cdot \max_{1\leq j\leq m} L_j+\max_{1 \leq j \leq m}\nm{\nabla f_j(0)}\leq E_1(K,A,b).
	\end{aligned}
	\]
	Consequently, this implies that
	\[
	\begin{aligned}
		U(x)\geq {}&\min_{1\leq j\leq m}[f_j(x)-f_j(z_x)]\geq  \min_{1\leq j\leq m} \langle\nabla f_j(z_x), x-z_x\rangle\\
		\geq {}&- \max_{1\leq j\leq m} \|\nabla f_j(z_x)\|\cdot\| x-z_x\|\geq -E_1(K,A,b)\nm{Ax-b}/\sigma_{\min}^+(A).
	\end{aligned}
	\]
	This concludes the proof of this lemma.
\end{proof}

\section{Accelerated Multiobjective Primal-Dual Flow}
\label{sec:ampd}
\subsection{Continuous model}
Motivated by the \cref{eq:apd} flow \cite{luo2021accelerated} for linearly constrained optimization and the \cref{eq:amg} flow \cite{luo_accelerated_2025} for unconstrained multiobjective optimization, we propose a novel accelerated multiobjective primal-dual (AMPD) flow:
\begin{equation}\label{eq:ampd}
	\tag{AMPD}
	\left\{
	\begin{aligned}
		{}&		\gamma x'' + (\mu+\gamma)x'+A^\top\xi +\proj_{C(x)}(\beta x'-\gamma x''-A^\top\xi)= 0,\\
		{}&\theta\xi' = A(x+x') - b,
	\end{aligned}
	\right.
\end{equation}
with the initial conditions $\xi(0) = \xi_0\in\R^r,\,x(0)=x_0\in\R^n$ and $x'(0) = x_1\in\R^n$. Following \cite{luo2021accelerated}, the scaling parameters $\theta$ and $\gamma$ satisfy
\begin{equation}\label{eq:theta-gama}
	\theta'=-\theta,\quad\gamma'=\mu-\gamma,
\end{equation}
with arbitrary positive initial values: $\theta(0)=\theta_0>0$ and $\gamma(0)=\gamma_0>0$. The parameter $\beta$ in the projection term has to meet the restriction
\begin{equation}\label{eq:alpha}
	\beta+\gamma+\mu>0.
\end{equation}
The nonnegative constant $\mu\geq 0$ is the minimal convexity parameter of all objectives and has been clarified in \cref{assum:Lj-muj}. Clearly, the case $\beta=0$ is allowed, which reduces to \cref{eq:intro-ampd}.

Introduce the inertial variable $v(t): = x(t) + x'(t) $ and rewrite \cref{eq:ampd} as follows
\begin{equation}\label{eq:ampd-sys}
	\left\{
	\begin{aligned}
		{}&\theta\xi'= Av - b,\\	
		{}&		x' = v-x,\\
		{}&		\gamma v' = \mu(x-v) -A^\top\xi-	\proj_{C(x)}((\beta+\gamma )x'-\gamma v'-A^\top\xi),
	\end{aligned}
	\right.
\end{equation}
with $v(0)=v_0=x_0+x_1$. Note that, for given $u,x,w$, by \cite[Appendix A]{luo_accelerated_2025}, we have
\begin{equation}\label{eq:key-lem}
	\zeta=u-\proj_{C(x)}(w-\zeta)\quad\Longleftrightarrow\quad \zeta\in u-\argmin_{z\in C(x)}\dual{u-w,z}.
\end{equation}
This together with the condition \cref{eq:alpha} implies an equivalent differential inclusion
\begin{equation}\label{eq:ampd-pf}
	\left\{
	\begin{aligned}
		{}&		\gamma x'' + (\mu+\gamma)x'+A^\top\xi +\argmax_{z\in C(x)}\dual{x',z}\ni 0,\\
		{}&\theta\xi' = A(x+x') - b,
	\end{aligned}
	\right.
\end{equation}
which is projection-free and admits an autonomous form
\begin{equation}\label{eq:ampd-di-sys}
	\tag{AMPD-DI}
	\left\{
	\begin{aligned}
		{}&\theta\xi' = Av - b,\\	
		{}&		x' = v-x,\\
		{}&		\gamma v' \in \mu(x-v) -A^\top\xi-	\argmin_{z\in C(x)}\dual{x-v,z}.
	\end{aligned}
	\right.
\end{equation}
To see this, by \cref{eq:ampd-sys,eq:key-lem} and the fact \cref{eq:alpha}, we note that
\[
\begin{aligned}
	{}&\proj_{C(x)}((\beta+\gamma)x'-\gamma v'-A^\top\xi)
	={}	{\argmin}_{z\in C(x)}\dual{\mu(x-v)-(\beta+\gamma)x',z} \\
	={}&{\argmin}_{z\in C(x)}\big\langle({\mu+\beta+\gamma})(x-v),z\big\rangle={\argmin}_{z\in C(x)}\dual{x-v,z}.
\end{aligned}
\] 
\begin{remark}
	\label{rem:x-v-xi}
	Following the existence results for differential inclusions from \cite{aubin_differential_1984,Sonntag2024a,Sonntag2024,Tolstonogov2000}, we can establish the existence of a global solution $$(x,v,\xi)\in C^1([0,\infty);\R^n)\times AC([0,\infty);\R^n)\times C^1([0,\infty);\R^r)$$ to the autonomous system \cref{eq:ampd-di-sys}.
	Rigorously speaking, $v\in AC([0,\infty);\R^n)$ is differentiable almost everywhere in  $[0,\infty)$ but $v'$ might be discontinuous. Hence, the projection in \cref{eq:ampd-sys} shall be understood for almost all $t\geq0$. As for the original second-order model \cref{eq:ampd} or the equivalent differential inclusion \cref{eq:ampd-pf}, we shall define a solution in proper sense (cf.\cite{apidopoulos_differential_2018,luo_accelerated_2021,Sonntag2024a,Sonntag2024}). After that, it can be proved that for every global solution $(x,v,\xi)$ to  \cref{eq:ampd-di-sys}, $(x,\xi)$ is a solution to \cref{eq:ampd} and \cref{eq:ampd-pf}. We leave the detailed justifications as our future works.
\end{remark}
\subsection{Time rescaling}\label{sec:rescale}
We now give a brief discussion about the time rescaling of the \cref{eq:ampd} flow. Let $s_0\in\R$ be given. By using the rescaling rule 
\[
t(s) = \int_{s_0}^{s}\delta(r)\dd r,\quad s\geq s_0,
\]
with some continuous nonnegative function $\delta:[s_0,\infty)\to\R_+$, we can transform \cref{eq:ampd} into a rescaled one with respect to $X(s) := x(t(s)),Y(s) := \xi(t(s)),\Gamma(s) := \gamma(t(s))$ and $\Theta(s) := \theta(t(s))$. Indeed, invoking the chain rule gives
\[
\dot{X}(s) := \frac{\dd X}{\dd s}= \delta(s)x'(t(s)),\quad \ddot{X}(s) := \frac{\dd\dot{X}}{\dd s}= \delta^2(s)x''(t(s))+\frac{\dot{\delta}}{\delta}\dot{X}(s),
\]
and combining this with \cref{eq:ampd,eq:theta-gama} yields that
\begin{equation}\label{eq:Re-AMPD}
	\left\{
	\begin{aligned}
		{}&		\Gamma\delta^{-2}\ddot{X}+\delta^{-1}\big(\mu+\Gamma-\Gamma\dot{\delta}\delta^{-2}\big) \dot{X}+A^\top Y\\ {}&\qquad\qquad \qquad  =-\proj_{C(X)}\left(\big(\widetilde{\beta}\delta^{-1}+\Gamma\dot{\delta}\delta^{-3}\big)\dot{X}-\Gamma\delta^{-2}\ddot{X}-A^\top Y\right),\\
		{}&\Theta\dot{Y}= \delta\big[A(X+\delta^{-1}\dot{X}) - b\big],
	\end{aligned}
	\right.
\end{equation}
where $\widetilde{\beta}(s)=\beta(t(s))$, and the parameter equations in \cref{eq:theta-gama} turn into $\dot{\Gamma} = \delta(\mu-\Gamma)$ and  $\dot{\Theta } = -\delta\Theta$. In addition, the estimate  \cref{eq:exp-rate-E} becomes
\begin{equation} \small
	\label{eq:res-exp-rate-E}
	\begin{aligned}
		{}&	\min_{1 \leq j \leq m}\pi_j(X(s),Y(s);\widehat{x},\widehat{\xi})+\frac{\Gamma(s)}{2}\big\|X(s)+\dot{X}(s)/\delta(s)-\widehat{x}\big\|^2+\frac{\Theta(s)}{2}\big\|Y(s)-\widehat{\xi}\big\|^2\\
		\leq {}&e^{-\int_{s_0}^{s}\delta(r)\dd r}	C(\widehat{x},\widehat{\xi}),\quad s\geq s_0.
	\end{aligned}
\end{equation}

For the convex case $\mu=0$, letting $\delta=\sqrt{\Gamma}$ implies that $\delta(s) = 2\sqrt{\gamma_0}/(\sqrt{\gamma_0}(s-s_0)+2)$. With this, if $\gamma_0=\theta_0=4$ and $s_0=1$, then $\delta(s) = 2/s$ and $t(s) = 2\ln s$ for all $s\geq 1$. Thus, if we take $\beta(t)=2e^{-t}$, which satisfies \cref{eq:alpha}, then $\widetilde{\beta}(s) = \beta(t(s)) =  2s^{-2}$ and from \cref{eq:Re-AMPD} we conclude that
\begin{equation}\label{eq:3/s}
	\left\{
	\begin{aligned}
		{}&		\ddot{X}+\frac{3}{s} \dot{X}+A^\top Y +\proj_{C(X)}\big(-\ddot{X}-A^\top Y\big)= 0,\\
		{}&\dot{Y}= \frac{s}{2}\big[A(X+s/2\dot{X})- b\big].
	\end{aligned}
	\right.
\end{equation}
Also, the exponential decay in \cref{eq:res-exp-rate-E} reduces to $\mathcal O(e^{-\int_{s_0}^{s}\delta(r)\dd r})=\mathcal O(s^{-2})$. Note that when the linear constraint vanishes, \cref{eq:3/s} amounts to the \cref{eq:mavd} model \cite{Sonntag2024a,Sonntag2024}.

Furthermore, if we introduce a new variable $Z$ implicitly by that $Z+s/2\dot{Z}= Y$, then the rescaled model \cref{eq:3/s} is also equivalent to
\begin{equation}\label{eq:3/s-2nd}
	\left\{
	\begin{aligned}
		{}&		\ddot{X}+\frac{3}{s} \dot{X}+A^\top(Z+s/2\dot{Z}) +\proj_{C(X)}\big(-\ddot{X}-A^\top(Z+s/2\dot{Z})\big)= 0,\\
		{}&	\ddot{Z}+\frac{3}{s} \dot{Z}+b- A(X+s/2\dot{X}) =0.
	\end{aligned}
	\right.
\end{equation}
When the number of objectives is $m=1$, this  agrees with the continuous-time primal-dual accelerated model \cite{zeng2022dynamical} for linearly constrained single objective optimization.
\subsection{Lyapunov analysis}
\label{sec:lyap}
Let $(x,v,\xi)$ be a global solution to \cref{eq:ampd-di-sys} and $(\gamma,\theta)$ satisfy \cref{eq:theta-gama}.
Given any $\widehat{x}\in\Omega$ and $\widehat{\xi}\in\R^r$, define  the Lyapunov function 
\begin{equation}\label{eq:Li-mnag}	
	\begin{aligned}
		\mathcal E(t;\widehat{x},\widehat{\xi}) :={}& \min_{1 \leq j \leq m}\pi_j(x(t),\xi(t);\widehat{x},\widehat{\xi})+\frac{\gamma(t)}{2}\nm{v(t)-\widehat{x}}^2+\frac{\theta(t)}{2}\big\|\xi(t)-\widehat{\xi}\big\|^2,
	\end{aligned}
\end{equation}
for all $t>0$.
In our subsequent analysis, we need the following lemma, which tells us how to calculate the derivative of the Lyapunov function with respect to the time variable.
\begin{lemma}[\cite{Sonntag2024a}]
	\label{lem:min-fi-dt-fi}
	Let $z\in\R^n$ be given. If $x\in C^1([0,\infty);\R^n)$, then for all $t>0$, there exists $j(t)\in \{1,...,m\}$ such that $		\min_{1\leq j\leq m }[f_j(x(t))-f_j(z)] ={} f_{j(t)}(x(t))-f_{j(t)}(z)$, and  for almost all $t>0$, there exists $j(t)\in \{1,...,m\}$ such that
	\[
	\begin{aligned}
		\frac{\dd}{\dd t}\min_{1 \leq j \leq m}[f_j(x(t))-f_j(z)] ={}& \dual{\nabla f_{j(t)}(x(t)),x'(t)}.
	\end{aligned}
	\]
\end{lemma}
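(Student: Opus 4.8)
The plan is to reduce everything to elementary one-variable calculus applied to the finitely many scalar functions $g_j(t) := f_j(x(t))-f_j(z)$. Since each $f_j$ is $C^1$ and $x\in C^1([0,\infty);\R^n)$, the composition $g_j$ is $C^1$ on $(0,\infty)$, and the chain rule gives $g_j'(t)=\dual{\nabla f_j(x(t)),x'(t)}$. Writing $g(t):=\min_{1\leq j\leq m}g_j(t)$, the first assertion is immediate: a minimum over the finite index set $\{1,\dots,m\}$ is always attained, so for each $t>0$ there is some $j(t)$ with $g(t)=g_{j(t)}(t)$.

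For the second assertion, I would first argue that $g$ is differentiable almost everywhere. Each $g_j$ is $C^1$, hence locally Lipschitz, and the pointwise minimum of finitely many locally Lipschitz functions is again locally Lipschitz; a locally Lipschitz function of a single real variable is differentiable almost everywhere (the one-dimensional case of Rademacher's theorem, equivalently a consequence of absolute continuity on compact intervals). Thus the set $N\subset(0,\infty)$ of points at which $g$ fails to be differentiable has Lebesgue measure zero.

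Next, fix any $t_0\in(0,\infty)\setminus N$ and choose an active index $j_0\in J(t_0):=\{j:\,g_j(t_0)=g(t_0)\}$. The key observation is that the auxiliary function $h:=g_{j_0}-g$ satisfies $h\geq 0$ on all of $(0,\infty)$, because $g$ is the pointwise minimum, with $h(t_0)=0$; hence $t_0$ is a global—thus local—minimizer of $h$. Since $g_{j_0}$ is $C^1$ and $g$ is differentiable at $t_0$, the function $h$ is differentiable at $t_0$, and the first-order optimality condition at an interior minimizer forces $h'(t_0)=0$, that is, $g'(t_0)=g_{j_0}'(t_0)$. Combining this with the chain-rule formula yields $\frac{d}{dt}g(t_0)=g'(t_0)=\dual{\nabla f_{j_0}(x(t_0)),x'(t_0)}$, which is exactly the claimed identity with $j(t_0)=j_0$.

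The only delicate point is the almost-everywhere differentiability of the minimum, but this is a standard property of finite minima of smooth functions; once it is in hand, the active-index argument via the nonnegative envelope $h$ is completely elementary. I would also remark that the same reasoning applies verbatim for any choice of active index at $t_0$: whenever $g$ is differentiable at $t_0$, all active indices necessarily share the same derivative, so the value of $\frac{d}{dt}g(t_0)$ is independent of the particular $j(t)$ selected.
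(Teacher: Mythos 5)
Your argument is correct and complete. Note that the paper itself does not prove this lemma at all: its ``proof'' is a one-line pointer to \cite[Lemma 4.12]{Sonntag2024a}, so you have supplied a self-contained justification where the authors only cite. Your route is the standard one and, as far as I can tell, essentially the same as the one in the cited reference: (i) each $g_j(t)=f_j(x(t))-f_j(z)$ is $C^1$ with $g_j'(t)=\dual{\nabla f_j(x(t)),x'(t)}$ by the chain rule; (ii) the finite minimum $g=\min_j g_j$ is locally Lipschitz, hence differentiable a.e.\ on $(0,\infty)$; (iii) at any point $t_0$ of differentiability and any active index $j_0$, the nonnegative function $h=g_{j_0}-g$ attains an interior minimum at $t_0$, forcing $g'(t_0)=g_{j_0}'(t_0)$. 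Each of these three steps is sound: the Lipschitz bound $\snm{\min_j a_j-\min_j b_j}\leq\max_j\snm{a_j-b_j}$ gives (ii), and the one-dimensional Rademacher/Lebesgue theorem you invoke is exactly the right tool. Your closing observation that all active indices share the same derivative at points of differentiability is also correct and is in fact what makes the lemma usable in the Lyapunov computation of \cref{thm:exp-rate-E}, where a specific active $j(t)$ is selected and its gradient is paired against the projection element $q$. The only thing I would add for completeness is an explicit remark that the $j(t)$ produced in the second assertion can be taken to be an active index (i.e.\ the same $j(t)$ as in the first assertion), since that is implicitly assumed when the lemma is applied; your construction already delivers this, so it is a matter of stating it rather than of fixing a gap.
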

\begin{proof}
	See \cite[Lemma 4.12]{Sonntag2024a}.
\end{proof}

The Lyapunov contraction given below is the main result of this section. However, we shall mention that even if the right hand side of \cref{eq:exp-rate-E} is exponentially decay, it does not tell us the final estimates of the feasibility violation and the objective gap, as $\min_{1 \leq j \leq m}\pi_j(\cdot,\cdot;\widehat{x},\widehat{\xi})$ is not necessarily nonnegative for different $\widehat{x}\in\Omega$ and $\widehat{\xi}\in\R^r$. A complete convergence rate proof will be given in the next section.
\begin{theorem}
	\label{thm:exp-rate-E}
	Let $(x,v,\xi)$ be a global solution to \cref{eq:ampd-di-sys} and $(\gamma,\theta)$ satisfy \cref{eq:theta-gama}.
	For the Lyapunov function defined by \cref{eq:Li-mnag}	with $\widehat{x}\in\Omega$ and $\widehat{\xi}\in\R^r$, we have
	\begin{equation}\label{eq:dE}
		\frac{\dd}{\dd t} \mathcal E(t;\widehat{x},\widehat{\xi}) \leq - \mathcal E(t;\widehat{x},\widehat{\xi}),\quad\text{for almost all $t>0$}.
	\end{equation}
	This gives the exponential decay
	\begin{equation}\label{eq:exp-rate-E}
		\small
		\begin{aligned}
			{}&	\min_{1 \leq j \leq m}\pi_j(x(t),\xi(t);\widehat{x},\widehat{\xi})+\frac{\gamma(t)}{2}\nm{v(t)-\widehat{x}}^2+\frac{\theta(t)}{2}\big\|\xi(t)-\widehat{\xi}\big\|^2\leq{} e^{-t}C_0(\nm{\widehat{x}},\big\|\widehat{\xi}\big\|),
		\end{aligned}
	\end{equation}
	for all $t>0$, where the function $C_0(\cdot,\cdot):\R_+\times\R_+\to\R_+$ is defined by 
	\begin{equation}
		\label{eq:bd-est-E0}
		\begin{aligned}
			C_0(s,t):= {}&
			\left(		{\rm max}_{1\leq j\leq m}\nm{\nabla f_j(0)}+Ls\right)\left(\nm{x_0}+s\right)+L\big(\nm{x_0}^2+s^2\big)\\
			{}&\quad + t\nm{Ax_0-b}+\gamma_0\big(\nm{v_0}^2+s^2\big)+\theta_0\big(\nm{\xi_0}^2+t^2\big),
		\end{aligned}
	\end{equation}
	for all $s,t\geq0$.
\end{theorem}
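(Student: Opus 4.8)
The plan is to establish the pointwise contraction \cref{eq:dE} for almost all $t>0$ and then integrate it by Gr\"onwall's inequality to get \cref{eq:exp-rate-E}, with $C_0$ arising from an explicit bound on $\mathcal E(0;\widehat x,\widehat\xi)$. Since $\widehat x\in\Omega$ forces $A\widehat x=b$, the merit integrand simplifies to $\pi_j(x,\xi;\widehat x,\widehat\xi)=f_j(x)-f_j(\widehat x)+\dual{\widehat\xi,Ax-b}$, whose last term is independent of $j$; hence
\[
\min_{1\le j\le m}\pi_j(x,\xi;\widehat x,\widehat\xi)=\min_{1\le j\le m}[f_j(x)-f_j(\widehat x)]+\dual{\widehat\xi,Ax-b}.
\]
First I would differentiate $\mathcal E$ in \cref{eq:Li-mnag} term by term: apply \cref{lem:min-fi-dt-fi} with $z=\widehat x$ to the first summand, yielding an active index $j(t)$ with derivative $\dual{\nabla f_{j(t)}(x),x'}$; use $\gamma'=\mu-\gamma$ and $\theta'=-\theta$ from \cref{eq:theta-gama} for the scaling weights; and substitute the dynamics \cref{eq:ampd-di-sys}, i.e. $\theta\xi'=Av-b=A(v-\widehat x)$, $x'=v-x$, and $\gamma v'=\mu(x-v)-A^\top\xi-g$ with $g\in\argmin_{z\in C(x)}\dual{x-v,z}=\argmax_{z\in C(x)}\dual{x',z}$.

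The next step is the bookkeeping of cancellations, which I expect to be routine but delicate. The $A^\top\xi$ term from $\gamma v'$ combines with the dual pairing $\dual{A^\top(\xi-\widehat\xi),v-\widehat x}$ to leave only $-\dual{A^\top\widehat\xi,v-\widehat x}$; together with $\dual{A^\top\widehat\xi,x'}$ from the primal gradient pairing, this collapses via $x'-(v-\widehat x)=\widehat x-x$ to $-\dual{\widehat\xi,Ax-b}$, which exactly cancels the $+\dual{\widehat\xi,Ax-b}$ inside $\mathcal E$ once I form $\frac{\dd}{\dd t}\mathcal E+\mathcal E$. The $\theta$-weighted quadratic cancels likewise, and the $\gamma$-weights recombine through $\frac{\mu-\gamma}{2}+\frac{\gamma}{2}=\frac{\mu}{2}$. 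Rewriting $\mu\dual{v-\widehat x,x-v}+\frac{\mu}{2}\nm{v-\widehat x}^2$ as $\frac{\mu}{2}\nm{x-\widehat x}^2-\frac{\mu}{2}\nm{v-x}^2$ (polarization $2\dual{a,b}-\nm a^2=\nm b^2-\nm{a-b}^2$ with $a=v-\widehat x,\ b=x-\widehat x$, so $a-b=v-x$), I expect to reach
\[
\frac{\dd}{\dd t}\mathcal E+\mathcal E=\dual{\nabla f_{j(t)}(x),v-x}+f_{j(t)}(x)-f_{j(t)}(\widehat x)-\dual{v-\widehat x,g}+\frac{\mu}{2}\nm{x-\widehat x}^2-\frac{\mu}{2}\nm{v-x}^2.
\]

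The crux is the final sign argument, combining three facts. Splitting $v-\widehat x=(v-x)+(x-\widehat x)$, the maximality of $g$ over $C(x)$ gives $\dual{\nabla f_{j(t)}(x),v-x}\le\dual{g,v-x}$, which annihilates the $\dual{g,v-x}$ part of $-\dual{v-\widehat x,g}$. Writing $g=\sum_{k}\lambda_k\nabla f_k(x)$ with $\lambda\in\Delta_m$, the remaining $-\dual{x-\widehat x,g}$ is controlled by the $\mu$-strong convexity \cref{eq:mu-ineq} of each $f_k$: summing $\dual{\nabla f_k(x),x-\widehat x}\ge f_k(x)-f_k(\widehat x)+\frac{\mu}{2}\nm{x-\widehat x}^2$ against $\lambda_k$ gives $-\dual{x-\widehat x,g}\le-\sum_k\lambda_k[f_k(x)-f_k(\widehat x)]-\frac{\mu}{2}\nm{x-\widehat x}^2$, and the $\mu$-quadratics cancel. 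What survives is $f_{j(t)}(x)-f_{j(t)}(\widehat x)-\sum_k\lambda_k[f_k(x)-f_k(\widehat x)]\le0$, because $f_{j(t)}(x)-f_{j(t)}(\widehat x)=\min_k[f_k(x)-f_k(\widehat x)]$ is dominated by any convex combination; dropping $-\frac{\mu}{2}\nm{v-x}^2\le0$ yields \cref{eq:dE}.

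Finally, \cref{eq:dE} integrates to $\mathcal E(t;\widehat x,\widehat\xi)\le e^{-t}\mathcal E(0;\widehat x,\widehat\xi)$, and I would bound $\mathcal E(0)$ by $C_0(\nm{\widehat x},\nm{\widehat\xi})$: estimate $\min_j[f_j(x_0)-f_j(\widehat x)]$ via the descent inequality \cref{eq:L-ineq} together with $\nm{\nabla f_j(\widehat x)}\le\nm{\nabla f_j(0)}+L\nm{\widehat x}$, bound $\dual{\widehat\xi,Ax_0-b}\le\nm{\widehat\xi}\nm{Ax_0-b}$, and control the two quadratic terms using $\nm{a-b}^2\le2\nm a^2+2\nm b^2$; matching these against \cref{eq:bd-est-E0} gives the stated $C_0$. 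The main obstacle is the rigorous matching of the active index $j(t)$ from \cref{lem:min-fi-dt-fi} between the value and the derivative of the min (an envelope/Danskin argument at points of differentiability), and handling the differential inclusion so that the selection $g$ and the almost-everywhere differentiability of $v$ are treated consistently, as flagged in \cref{rem:x-v-xi}.
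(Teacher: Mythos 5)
Your proposal is correct and follows essentially the same route as the paper: differentiate $\mathcal E$ using \cref{lem:min-fi-dt-fi} and the dynamics \cref{eq:ampd-di-sys}, cancel the dual and $\theta$-terms, recombine the $\mu$-quadratics via the polarization identity \cref{eq:3id}, use the argmin characterization of the selection $g$ together with its convex-combination representation and $\mu$-strong convexity to absorb $-\dual{x-\widehat x,g}$ into $-\min_j[f_j(x)-f_j(\widehat x)]$, and finally bound $\mathcal E(0)$ by $C_0$ exactly as in \cref{eq:f0-fhat,eq:nablafj-hat}. The index-matching concern you flag is harmless for the reason you implicitly exploit: the inequality $\dual{\nabla f_j(x)-g,v-x}\le 0$ holds for every $j$, so the derivative-active index need not coincide with the value-active one.
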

\begin{proof}
	Let us first prove \cref{eq:dE}. Since $\widehat{x}\in\Omega$, we have $\pi_j(x,\xi;\widehat{x},\widehat{\xi})=f_j(x)-f_j(\widehat{x})+\big\langle \widehat{\xi},Ax-b\big\rangle$.	Thanks to \cref{lem:min-fi-dt-fi}, for almost all $t>0$, there exists $j(t)\in\{1,\cdots,m\}$ such that
	\[
	\frac{\dd}{\dd t}\min_{1\leq j\leq m}\pi_j(x,\xi;\widehat{x},\widehat{\xi})=\big	\langle x',\nabla_x\pi_{j(t)}(x,\xi;\widehat{x},\widehat{\xi})\big\rangle=\big\langle x',\nabla f_{j(t)}(x)+A^\top\widehat{\xi}\,\big\rangle.
	\]
	Thus we can replace all time derivatives with respect to the right hand side terms in \cref{eq:ampd-di-sys} and obtain that
	\[
	\begin{aligned}
		\frac{\dd}{\dd t} \mathcal E(t;\widehat{x},\widehat{\xi})
		= 	{}&
		\big\langle x',\nabla f_{j(t)}(x)+A^\top\widehat{\xi}\,\big\rangle
		+ \frac{\gamma'}{2}\nm{v-\widehat{x}}^2+	\frac{\theta'}{2}\big\|\xi-\widehat{\xi}\big\|^2\\
		{}&\qquad		+\dual{\gamma v',v-\widehat{x}}	+\big\langle\theta\xi',\xi-\widehat{\xi}\,\big\rangle\\
		=	{}&	\big\langle v-x,\nabla f_{j(t)}(x)+A^\top\widehat{\xi}\,\big\rangle
		+ \frac{\mu-\gamma}{2}\nm{v-\widehat{x}}^2-	\frac{\theta}{2}\big\|\xi-\widehat{\xi}\big\|^2\\
		{}&\qquad+\dual{\mu(x-v) -A^\top\xi-q,v-\widehat{x}}	+\big\langle Av-b,\xi-\widehat{\xi}\,\big\rangle,
	\end{aligned}
	\]		
	where $q := \mu(x-v)-\gamma v'-A^\top\xi\in \argmin_{z\in C(x)}\dual{x-v,z}$ satisfies
	\begin{equation}\label{eq:key-q-gradj}
		\dual{\nabla f_j(x)-q,v-x}\leq 0\quad\forall\,1\leq j\leq m.
	\end{equation}
	
	Recall the identity
	\begin{equation}\label{eq:3id}
		2	 \dual{x-v,v-\widehat{x} } =
		\nm{x-\widehat{x} }^2-\nm{v-\widehat{x}}^2-\nm{v-x}^2,
	\end{equation}
	which is trivial but very useful. It follows that
	\begin{equation}\label{eq:eq-I1}
		\begin{aligned}
			{}&	\dual{\mu(x-v),v-\widehat{x}}
			+ \frac{\mu-\gamma}{2}\nm{v-\widehat{x}}^2-	\frac{\theta}{2}\big\|\xi-\widehat{\xi}\big\|^2\\
			={}&\frac{\mu}{2}\nm{x-\widehat{x}}^2-\frac{\mu}{2}\nm{v-x}^2-\frac{\gamma}{2}\nm{v-\widehat{x}}^2
			-\frac{\theta}{2}\big\|\xi-\widehat{\xi}\big\|^2.
		\end{aligned}
	\end{equation}
	Invoking the splitting $v-x=v-\widehat{x}+\widehat{x}-x$ gives
	\begin{equation}\label{eq:I2}
		\begin{aligned}
			{}&\big\langle v-x,\nabla f_{j(t)}(x)+A^\top\widehat{\xi}\,\big\rangle- \dual{A^\top\xi+q,v-\widehat{x}} +\big\langle Av-b,\xi-\widehat{\xi}\,\big\rangle\\
			={}&\big\langle \widehat{x}-x,\nabla f_{j(t)}(x)+A^\top\widehat{\xi}\,\big\rangle+ \dual{\nabla f_{j(t)}(x)-q,v-\widehat{x}}\\{}&\quad + \big\langle A^{\top}(\widehat{\xi}-\xi),v-\widehat{x}\big\rangle+\big\langle Av-b,\xi-\widehat{\xi}\,\big\rangle,
		\end{aligned}
	\end{equation}
	where the last line vanishes since $\widehat{x}\in\Omega$. Similarly, using $v -\widehat{x}=v-x+v-\widehat{x}$ and the fact \cref{eq:key-q-gradj} yields that
	\[
	\begin{aligned}
		{}&\big\langle \widehat{x}-x,\nabla f_{j(t)}(x)+A^\top\widehat{\xi}\,\big\rangle+ \dual{\nabla f_{j(t)}(x)-q,v-\widehat{x}}	\\	
		={}&	\big\langle \widehat{x}-x,q+A^\top\widehat{\xi}\,\big\rangle +  \dual{\nabla f_{j(t)}(x)-q,v-x}\leq	\big\langle \widehat{x}-x,q+A^\top\widehat{\xi}\,\big\rangle.
	\end{aligned}
	\]
	Consequently, this together with \cref{eq:eq-I1,eq:I2} leads to
	\begin{equation}\label{eq:dE-mid}
		\begin{aligned}
			\frac{\dd}{\dd t}\mathcal E(t;\widehat{x},\widehat{\xi})
			\leq 	{}&	\frac{\mu}{2}\nm{x-\widehat{x}}^2+\big\langle \widehat{x}-x,q+A^\top\widehat{\xi}\,\big\rangle\\
			{}&\quad -\frac{\mu}{2}\nm{v-x}^2-\frac{\gamma}{2}\nm{v-\widehat{x}}^2
			-\frac{\theta}{2}\big\|\xi-\widehat{\xi}\big\|^2.
		\end{aligned}
	\end{equation}
	Since $q\in C(x)=\conv{\nabla f_1(x),\cdots,\nabla f_m(x)}$, assume that $q = \sum_{j=1}^{m}\lambda_j\nabla f_j(x)$ with some $\lambda\in\Delta_m$. Then it follows from \cref{eq:mu-ineq,assum:Lj-muj} that
	\[
	\begin{aligned}
		\frac{\mu}{2}\nm{x-\widehat{x}}^2+\dual{\widehat{x}-x,q} = 	{}&\sum_{j=1}^{m}\lambda_j\left(\frac{\mu}{2}\nm{x-\widehat{x}}^2+\dual{\widehat{x}-x,\nabla f_j(x)}  \right) \\
		\leq{}&\sum_{j=1}^{m}\lambda_j\left[f_j(\widehat{x})-f_j(x)\right] \leq -\min_{1\leq j \leq m}\left[f_j(x)-f_j(z)\right].
	\end{aligned}
	\]		
	Plugging this into \cref{eq:dE-mid} yields that
	\[
	\begin{aligned}
		\frac{\dd}{\dd t} \mathcal E(t;\widehat{x},\widehat{\xi})
		\leq 	- \mathcal E(t;\widehat{x},\widehat{\xi})  -\frac{\mu}{2}\nm{v-x}^2,
	\end{aligned}
	\]
	which implies \cref{eq:dE} immediately. 
	
	Now let us prove \cref{eq:exp-rate-E}. In view of \cref{eq:dE}, it is easy to obtain that $ \mathcal E(t;\widehat{x},\widehat{\xi})
	\leq 	e^{-t}\mathcal E(0;\widehat{x},\widehat{\xi})$.
	Recall the definition \cref{eq:Li-mnag}	 of the Lyapunov function, it remains to check the upper bound constant defined by \cref{eq:bd-est-E0}. To do this, let start from 
	\[
	\begin{aligned}
		\mathcal E(0;\widehat{x},\widehat{\xi})={}&\min_{1\leq j\leq m}	\pi_j(x_0,\xi_0;\widehat{x},\widehat{\xi})+\frac{\gamma_0}{2}\nm{v_0-\widehat{x}}^2+\frac{\theta_0}{2}\big\|\xi_0-\widehat{\xi}\big\|^2\\
		={}&\min_{1\leq j\leq m}[f_{j}(x_0)-f_{j}(\widehat{x})]	+\big\langle \widehat{\xi},Ax_0-b\big\rangle+\frac{\gamma_0}{2}\nm{v_0-\widehat{x}}^2+\frac{\theta_0}{2}\big\|\xi_0-\widehat{\xi}\big\|^2.
	\end{aligned}
	\]
	It is sufficient to find the upper bound of the first term. 
	Notice that by \cref{eq:L-ineq}, we have
	\[
	\begin{aligned}
		f_j(x_0)-f_j(\widehat{x})\leq{}&	 \dual{\nabla f_j(\widehat{x}),x_0-\widehat{x}} +\frac{L_j}{2}\nm{x_0-\widehat{x}}^2\\
		\leq {}&\nm{\nabla f_j(\widehat{x})}\nm{x_0-\widehat{x}}+L_j(\nm{x_0}^2+\nm{\widehat{x}}^2),
	\end{aligned}
	\]
	and 
	\begin{equation}\label{eq:nablafj-hat}
		\nm{\nabla f_j(\widehat{x})}\leq 		\nm{\nabla f_j(0)}+	\nm{\nabla f_j(\widehat{x})-\nabla f_j(0)}\leq 	\nm{\nabla f_j(0)}+L_j\nm{\widehat{x}}.
	\end{equation}
	We then arrive at 
	\begin{equation}\label{eq:f0-fhat}
		f_j(x_0)-f_j(\widehat{x})\leq\left(\nm{\nabla f_j(0)}+L_j\nm{\widehat{x}}\right)\left(\nm{x_0}+\nm{\widehat{x}}\right)+L_j(\nm{x_0}^2+\nm{\widehat{x}}^2).
	\end{equation}
	Finally, this gives $ \mathcal E(0;\widehat{x},\widehat{\xi})\leq C_0(\nm{\widehat{x}},\big\|\widehat{\xi}\big\|)$ and completes the proof.
\end{proof}

\section{Rate of Convergence}
\label{sec:rate-ampd}
As mentioned previously, the Lyapunov analysis in \cref{sec:lyap} provides nothing about the convergence rate. It takes further efforts from that to the feasibility violation and the objective gap, as defined by \cref{def:approx}. In this section, we shall complete the proof of the rate of convergence.

Let $(x,v,\xi)$ be a global solution to \cref{eq:ampd-di-sys} and $\bar{x}\in\Omega$ be arbitrarily  fixed. Define the function $C_1(\cdot):\R_+\to\R_+$ by that
\begin{equation}\label{eq:C1}
	C_1(s): = 	\left(		{\rm max}_{1\leq j\leq m}\nm{\nabla f_j(0)}+L\nm{x_0}\right)\left(\nm{x_0}+s\right)+		L\big(\nm{x_0}^2+s^2\big),\quad\forall\,s\geq0.
\end{equation}
For later use, introduce the following quantities:	
\begin{equation}\label{eq:alpha-barx}
	\begin{aligned}
		\alpha_0(\bar{x}):={}&C_0(\nm{\bar{x}},0)+C_1(\nm{\bar{x}})+{\rm max}_{1\leq j\leq m} \snm{f_j(x_0)},\\
		\alpha_1(\bar{x}):={}&	\left(1+\nm{A}/\sigma_{\min}^+(A)\right)R(\alpha_0(\bar x)) + \nm{b}/\sigma_{\min}^+(A),\\
		\alpha_2(\bar{x}):={}& L		\alpha_1(\bar{x})/\sigma_{\min}^+(A)+ {\rm max}_{1 \leq j \leq m} \nm{\nabla f_j(0)}/\sigma_{\min}^+(A),\\
		\alpha_3(\bar{x}):={}&\max\{	C_0(D(\bm\alpha(\bar{x})),0),\alpha_2(\bar{x})C_0(\alpha_1(\bar{x}),1+\alpha_2(\bar{x}))\},
	\end{aligned}
\end{equation}
where $\bm{\alpha}(\bar{x})=\max\{\bm{\alpha}_*,F(x_0)+C_1(R(\alpha_0(\bar{x})))\}$. Above, the constant $\bm{\alpha}_*$ has been declared in \cref{assum:alpha-pw}, and $R(\cdot),\,D(\cdot)$ and $C_0(\cdot,\cdot)$ are defined respectively in \cref{assum:j0,eq:D-alpha,eq:bd-est-E0}.
\begin{remark}
	For any fixed $\bar{x}\in\Omega$, all the quantities in \cref{eq:alpha-barx} are well defined and bounded constants. Actually, we can take the minimal norm element in the constraint set $\Omega$, namely $\bar{x}=A^+b=\argmin\{\nm{x}:\,Ax=b\}$, which exists uniquely.
\end{remark}

Our first goal is to establish the uniformly bound of the solution $x(t)$ over $[0,\infty)$.
\begin{lemma}\label{lem:bd-xt}
	It holds that $\nm{x(t)}\leq R(\alpha_0(\bar{x}))<+\infty$ for all $t>0$.
\end{lemma}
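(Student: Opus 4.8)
The plan is to leverage the exponential decay estimate \cref{eq:exp-rate-E} from \cref{thm:exp-rate-E}, evaluated at the convenient reference pair $\widehat{x}=\bar{x}\in\Omega$ and $\widehat{\xi}=0$, and then to convert a bound on the objective values into a bound on $\nm{x(t)}$ via the level-set radius $R(\cdot)$ furnished by \cref{assum:j0}. The point is that the left-hand side of \cref{eq:exp-rate-E} contains $\min_{1\le j\le m}\pi_j(x(t),\xi(t);\bar{x},0)$, and with this choice of reference this min reduces to a pure objective gap.

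First I would specialize \cref{eq:exp-rate-E}. Since $\bar{x}\in\Omega$ gives $A\bar{x}=b$, the pairing term in $\pi_j(x,\xi;\bar{x},0)=Q_j(x,0)-Q_j(\bar{x},\xi)$ cancels, so $\pi_j(x(t),\xi(t);\bar{x},0)=f_j(x(t))-f_j(\bar{x})$ for every $j$. Discarding the two nonnegative quadratic terms $\tfrac{\gamma(t)}{2}\nm{v(t)-\bar{x}}^2$ and $\tfrac{\theta(t)}{2}\nm{\xi(t)}^2$ on the left, and using $e^{-t}\le 1$ on the right, \cref{eq:exp-rate-E} yields
\[
\min_{1\le j\le m}\big[f_j(x(t))-f_j(\bar{x})\big]\le C_0(\nm{\bar{x}},0)\quad\text{for all }t>0.
\]

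Next I would select, for each fixed $t$, an index $j(t)\in\{1,\dots,m\}$ attaining this minimum, so that $f_{j(t)}(x(t))\le f_{j(t)}(\bar{x})+C_0(\nm{\bar{x}},0)$. To control $f_{j(t)}(\bar{x})$ purely in terms of the initial data, I would apply the smoothness inequality \cref{eq:L-ineq} at $x_0$, combined with the estimate $\nm{\nabla f_j(x_0)}\le \max_{1\le j\le m}\nm{\nabla f_j(0)}+L\nm{x_0}$ (exactly as in \cref{eq:nablafj-hat}) and the crude bounds $\nm{\bar{x}-x_0}\le\nm{x_0}+\nm{\bar{x}}$ and $\nm{\bar{x}-x_0}^2\le 2(\nm{x_0}^2+\nm{\bar{x}}^2)$. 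This gives precisely $f_j(\bar{x})-f_j(x_0)\le C_1(\nm{\bar{x}})$ with $C_1$ as in \cref{eq:C1}, hence $f_{j(t)}(\bar{x})\le \max_{1\le j\le m}\snm{f_j(x_0)}+C_1(\nm{\bar{x}})$. Combining the two displayed bounds then yields $f_{j(t)}(x(t))\le \max_{1\le j\le m}\snm{f_j(x_0)}+C_1(\nm{\bar{x}})+C_0(\nm{\bar{x}},0)=\alpha_0(\bar{x})$.

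Finally, the membership $x(t)\in\mathcal L_{f_{j(t)}}(\alpha_0(\bar{x}))$ together with \cref{assum:j0} gives $\nm{x(t)}\le R_{j(t)}(\alpha_0(\bar{x}))\le R(\alpha_0(\bar{x}))$; since $R(\alpha_0(\bar{x}))=\max_{1\le j\le m}R_j(\alpha_0(\bar{x}))$ is a finite constant independent of $t$, the bound is uniform, which is the claim. I do not anticipate a serious obstacle; the only point requiring care is that the minimizing index $j(t)$ varies with $t$, so one cannot pin down a single objective in advance — but because $R(\cdot)$ is defined as the \emph{maximum} of the per-objective radii, membership in whichever single level set is active at time $t$ already delivers the same uniform estimate.
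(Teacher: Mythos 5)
Your proposal is correct and follows essentially the same route as the paper's own proof: specialize the Lyapunov decay estimate \cref{eq:exp-rate-E} at $(\widehat{x},\widehat{\xi})=(\bar{x},0)$ so that $\min_j\pi_j$ collapses to $\min_j[f_j(x(t))-f_j(\bar{x})]$, bound $f_{j(t)}(\bar{x})-f_{j(t)}(x_0)$ by $C_1(\nm{\bar{x}})$ exactly as in \cref{eq:f0-fhat}, conclude $f_{j(t)}(x(t))\le\alpha_0(\bar{x})$, and invoke \cref{assum:j0}. Your closing remark about the $t$-dependence of the minimizing index being absorbed by taking the maximum over the per-objective radii $R_j$ is precisely the point the paper relies on implicitly.
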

\begin{proof}
	Take $(\widehat{x},\widehat{\xi})=(\bar{x},0)\in \Omega\times\R^r$ in advance.
	Thanks to \cref{lem:min-fi-dt-fi,eq:exp-rate-E}, for all $t>0$, there exists $j(t)\in\{1,\cdots,m\}$ such that
	\begin{equation*}
		\begin{aligned}
			{}& f_{j(t)}(x(t))-f_{j(t)}(\bar{x})	
			=	{}	\min_{1\leq j\leq m}\pi_{j}(x(t),\xi(t);\bar{x},0)\leq e^{-t}C_0(\nm{\bar{x}},0)\leq C_0(\nm{\bar{x}},0).
		\end{aligned}
	\end{equation*}
	Similarly with \cref{eq:f0-fhat}, we can prove that $f_{j(t)}(\bar{x})-f_{j(t)}(x_0)\leq C_1(\nm{\bar{x}})$, which leads to $	f_{j(t)}(x(t))		\leq \alpha_0(\bar{x})$ and thus $x(t)\in \mathcal{L}_{f_{j(t)}}(\alpha_0(\bar{x}))$. By \cref{assum:j0} we conclude that $\nm{x(t)}\leq R(\alpha_0(\bar{x}))$ for all $t>0$. This completes the proof.
\end{proof}

Based on this, we are able to establish the exponential rate.
\begin{theorem}\label{thm:Ax-b-exp}
	We have $	\nm{Ax(t)-b}\leq  e^{-t}C_0(\alpha_1(\bar{x}),1+\alpha_2(\bar{x}))$ for all $t>0$.
\end{theorem}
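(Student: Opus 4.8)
The plan is to exploit the exponential decay \cref{eq:exp-rate-E} of \cref{thm:exp-rate-E} with a pair $(\widehat x,\widehat\xi)$ tailored to the time $t$, so that the (possibly negative) term $\min_{1\le j\le m}\pi_j$ reveals the feasibility residual $g(t):=Ax(t)-b$. Fix $t>0$ and assume $g(t)\neq 0$, the case $g(t)=0$ being trivial. Since $\widehat x\in\Omega$ forces $A\widehat x=b$, one has $\pi_j(x,\xi;\widehat x,\widehat\xi)=f_j(x)-f_j(\widehat x)+\dual{\widehat\xi,Ax-b}$, so dropping the nonnegative squared-norm terms in \cref{eq:exp-rate-E} gives
\[
\min_{1\le j\le m}[f_j(x(t))-f_j(\widehat x)]+\dual{\widehat\xi,g(t)}\le e^{-t}C_0(\nm{\widehat x},\nm{\widehat\xi}).
\]

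The first step is to construct a feasible anchor point whose objective gap against $x(t)$ can be bounded below. Following the proof of \cref{lem:lw-bd-U}, I would take $\widehat x=z_{x(t)}:=x(t)-A^+(Ax(t)-b)\in\Omega$. By \cref{lem:bd-xt} we have $\nm{x(t)}\le R(\alpha_0(\bar x))$, so $x(t)$ lies in the ball $K$ of that radius, and the estimate \cref{eq:st-zx} yields $\nm{\widehat x}\le E_2(K,A,b)=\alpha_1(\bar x)$, which matches the first argument of $C_0$. The same Lipschitz and triangle-inequality chain appearing in that proof gives
\[
\min_{1\le j\le m}[f_j(x(t))-f_j(\widehat x)]\ge -\max_{1\le j\le m}\nm{\nabla f_j(\widehat x)}\,\nm{x(t)-\widehat x}\ge -\frac{E_1(K,A,b)}{\sigma_{\min}^+(A)}\nm{g(t)},
\]
and a direct comparison with \cref{eq:alpha-barx} shows $E_1(K,A,b)/\sigma_{\min}^+(A)=\alpha_2(\bar x)$.

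The second step is to choose the dual anchor so as to cancel this negative contribution and leave $\nm{g(t)}$ alone on the left. I would set $\widehat\xi=(1+\alpha_2(\bar x))\,g(t)/\nm{g(t)}$, so that $\dual{\widehat\xi,g(t)}=(1+\alpha_2(\bar x))\nm{g(t)}$ and $\nm{\widehat\xi}=1+\alpha_2(\bar x)$. Adding the two lower bounds collapses the left-hand side to $(1+\alpha_2(\bar x))\nm{g(t)}-\alpha_2(\bar x)\nm{g(t)}=\nm{g(t)}$, whence
\[
\nm{g(t)}\le e^{-t}C_0(\nm{\widehat x},1+\alpha_2(\bar x))\le e^{-t}C_0(\alpha_1(\bar x),1+\alpha_2(\bar x)),
\]
the last inequality using the monotonicity of $C_0$ in its first argument together with $\nm{\widehat x}\le\alpha_1(\bar x)$.

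The main obstacle is precisely the sign issue flagged before \cref{thm:exp-rate-E}: because $\min_j\pi_j$ need not be nonnegative, one cannot simply read off the residual from the decay estimate. The delicate point is the simultaneous calibration of $(\widehat x,\widehat\xi)$ — the primal anchor $z_{x(t)}$ is chosen to make the objective-gap defect exactly $-\alpha_2(\bar x)\nm{g(t)}$, while the dual magnitude $1+\alpha_2(\bar x)$ is the smallest value absorbing this defect and still leaving the clean coefficient $1$ in front of $\nm{g(t)}$. Checking that these constructions reproduce \emph{exactly} the constants $\alpha_1(\bar x)$ and $1+\alpha_2(\bar x)$ sitting as the arguments of $C_0$ requires matching the definitions in \cref{eq:alpha-barx} against $E_1$ and $E_2$ from \cref{lem:lw-bd-U}, and this bookkeeping is the part demanding care.
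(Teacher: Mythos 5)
Your proposal is correct and follows essentially the same route as the paper: the paper's proof also fixes a time $\tau$, takes $\widehat{x}=x(\tau)-A^+(Ax(\tau)-b)$ and $\widehat{\xi}=(1+\alpha_2(\bar{x}))\,(Ax(\tau)-b)/\nm{Ax(\tau)-b}$, and combines the gradient lower bound $\dual{\nabla f_{j(\tau)}(\widehat{x}),x(\tau)-\widehat{x}}\geq-\alpha_2(\bar{x})\nm{Ax(\tau)-b}$ with \cref{eq:exp-rate-E} to isolate the feasibility residual. Your bookkeeping identifying $E_2(K,A,b)=\alpha_1(\bar{x})$ and $E_1(K,A,b)/\sigma_{\min}^+(A)=\alpha_2(\bar{x})$ matches the definitions in \cref{eq:alpha-barx}.
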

\begin{proof}
	Recall that $(x,v,\xi)$ is a global solution to \cref{eq:ampd-di-sys}. Let us define
	\[
	\widetilde{x} := x-A^+(Ax-b),\quad \widetilde{\xi}:=\left\{
	\begin{aligned}
		{}&		0,&&\text{if }Ax=b,\\
		{}&	\left(1+\alpha_2(\bar{x})\right)	\frac{Ax-b}{\nm{Ax-b}},&&\text{if }Ax\neq b.
	\end{aligned}
	\right.
	\]
	It is clear that both $\widetilde{x}:[0,\infty)\to\Omega$ and $\widetilde{\xi}:[0,\infty)\to\R^r$ are well defined and we have  $\big\|\widetilde{\xi}(t)\big\|\leq 1+\alpha_2(\bar{x})$. By \cref{lem:bd-xt}, a similar argument with that of \cref{eq:st-zx} implies $\nm{\widetilde{x}(t)}\leq
	\alpha_1(\bar{x})$, and it is important to see that $
	{}\big\langle\widetilde{\xi}(t),Ax(t)-b\big\rangle\geq(1+\alpha_2(\bar{x})) \nm{Ax(t)-b}$ and $\nm{x(t)-\widetilde{x}(t)}\leq  \nm{Ax(t)-b}/\sigma_{\min}^+(A)$. Analogously to \cref{eq:nablafj-hat} we have for all $1\leq j\leq m$, that $	\nm{\nabla f_j(\widetilde{x}(t))}\leq 		 {\rm max}_{1 \leq j \leq m}\nm{\nabla f_j(0)}+L\alpha_1(\bar{x})$
	and $	\dual{\nabla f_j(\widetilde{x}(t)),x(t)-\widetilde{x}(t)}\geq -\alpha_2(\bar{x})\nm{Ax(t)-b}$ for all $t>0$.
	
	Now, let $\tau>0$ be arbitrary. Take $(\widehat{x},\widehat{\xi})=(\widetilde{x}(\tau),\widetilde{\xi}(\tau))\in \Omega\times\R^r$ in advance. Again, by \cref{lem:min-fi-dt-fi,eq:exp-rate-E}, for all $t>0$, there exists $j(t)\in\{1,\cdots,m\}$ such that
	\begin{equation*}\small
		\begin{aligned}
			{}& f_{j(t)}(x(t))-f_{j(t)}(\widehat{x})	+\big\langle \widehat{\xi},Ax(t)-b\big\rangle
			=	{}\min_{1\leq j\leq m}\pi_{j}(x(t),\xi(t);\widehat{x},\widehat{\xi})\leq e^{-t}C_0(\nm{\widehat{x}},\big\|\widehat{\xi}\big\|),
		\end{aligned}
	\end{equation*}
	where by \cref{eq:bd-est-E0} we get $C_0(\nm{\widehat{x}},\big\|\widehat{\xi}\big\|)=C_0(\nm{\widetilde{x}(\tau)},\big\|\widetilde{\xi}(\tau)\big\|)\leq  C_0(\alpha_1(\bar{x}),1+\alpha_2(\bar{x}))$.
	Especially, at time $t=\tau$, we have
	\[
	f_{j(\tau)}(x(\tau))-f_{j(\tau)}(\widetilde{x}(\tau))+\big\langle \widetilde{\xi}(\tau),Ax(\tau)-b\big\rangle \leq e^{-\tau}C_0(\alpha_1(\bar{x}),1+\alpha_2(\bar{x})).
	\]
	On the other hand, we find that
	\begin{equation*}
		\begin{aligned}
			{}&f_{j(\tau)}(x(\tau))-f_{j(\tau)}(\widetilde{x}(\tau))+\big\langle \widetilde{\xi}(\tau),Ax(\tau)-b\big\rangle \\
			\geq{}&
			\dual{\nabla f_{j(\tau)}(\widetilde{x}(\tau)), x(\tau)-\widetilde{x}(\tau)}
			+(1+\alpha_2(\bar{x}))\nm{Ax(\tau)-b}
			\geq
			\nm{Ax(\tau)-b}.
		\end{aligned}
	\end{equation*}
	Consequently, we obtain $\nm{Ax(\tau)-b}\leq e^{-\tau}C_0(\alpha_1(\bar{x}),1+\alpha_2(\bar{x}))$ for any $\tau>0$. This finishes the proof immediately.
\end{proof}

\begin{theorem}\label{thm:est-Uxt}
	For all $t>0$, it holds that $		\snm{U(x(t))}\leq \alpha_3(\bar{x}) e^{-t}$.
\end{theorem}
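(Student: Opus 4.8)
The plan is to sandwich $U(x(t))$ between two exponentially decaying quantities, obtaining the lower constant $\alpha_2(\bar{x})C_0(\alpha_1(\bar{x}),1+\alpha_2(\bar{x}))$ and the upper constant $C_0(D(\bm{\alpha}(\bar{x})),0)$, and then read off $\alpha_3(\bar{x})$ as their maximum from \cref{eq:alpha-barx}. The two halves are of quite different character: the lower bound recycles objects already built in \cref{thm:Ax-b-exp}, while the upper bound is the substantive part and rests on the representation in \cref{lem:u0-}.

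For the lower bound I would reuse the feasible surrogate $\widetilde{x}(t)=x(t)-A^+(Ax(t)-b)\in\Omega$ from the proof of \cref{thm:Ax-b-exp}, together with the bounds $\nm{\widetilde{x}(t)}\le\alpha_1(\bar{x})$ and $\dual{\nabla f_j(\widetilde{x}(t)),\,x(t)-\widetilde{x}(t)}\ge-\alpha_2(\bar{x})\nm{Ax(t)-b}$ established there. Since $\widetilde{x}(t)\in\Omega$, the very definition \cref{eq:obj-gap} of $U$ gives $U(x(t))\ge\min_{1\le j\le m}[f_j(x(t))-f_j(\widetilde{x}(t))]$, and convexity of each $f_j$ (the gradient inequality) then yields $U(x(t))\ge-\alpha_2(\bar{x})\nm{Ax(t)-b}$. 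Plugging in the exponential decay $\nm{Ax(t)-b}\le e^{-t}C_0(\alpha_1(\bar{x}),1+\alpha_2(\bar{x}))$ from \cref{thm:Ax-b-exp} produces $U(x(t))\ge-e^{-t}\alpha_2(\bar{x})C_0(\alpha_1(\bar{x}),1+\alpha_2(\bar{x}))$.

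For the upper bound, the first step is to certify that the trajectory never leaves the level set $\mathcal{L}_F(\bm{\alpha}(\bar{x}))$: from \cref{lem:bd-xt} we have $\nm{x(t)}\le R(\alpha_0(\bar{x}))$, and the one-sided estimate used to prove $f_{j(t)}(\bar{x})-f_{j(t)}(x_0)\le C_1(\nm{\bar{x}})$ in \cref{lem:bd-xt}, applied with $x(t)$ in place of $\bar{x}$, gives $f_j(x(t))-f_j(x_0)\le C_1(\nm{x(t)})\le C_1(R(\alpha_0(\bar{x})))$ by monotonicity of $C_1$; hence $f_j(x(t))\le(\bm{\alpha}(\bar{x}))_j$ for every $j$ and $x(t)\in\mathcal{L}_F(\bm{\alpha}(\bar{x}))$. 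This membership unlocks \cref{lem:u0-}(ii). For each $F^*\in F(P_w\cap\mathcal{L}_F(\bm{\alpha}(\bar{x})))$ and each $z\in F^{-1}(F^*)\cap\Omega$, feasibility $z\in\Omega$ reduces $\min_{1\le j\le m}[f_j(x(t))-f_j(z)]$ to $\min_{1\le j\le m}\pi_j(x(t),\xi(t);z,0)$, which by \cref{eq:exp-rate-E}, after discarding the two nonnegative quadratic terms, is at most $e^{-t}C_0(\nm{z},0)$. Since this holds for every such $z$, the inner infimum satisfies $\inf_z\min_j[f_j(x(t))-f_j(z)]\le e^{-t}\inf_zC_0(\nm{z},0)=e^{-t}C_0(\inf_z\nm{z},0)\le e^{-t}C_0(D(\bm{\alpha}(\bar{x})),0)$, using that $s\mapsto C_0(s,0)$ is continuous and increasing and that $\inf_z\nm{z}\le D(\bm{\alpha}(\bar{x}))$ by the definition of $D$ in \cref{eq:D-alpha}; the finiteness $D(\bm{\alpha}(\bar{x}))<+\infty$ is \cref{lem:u0-}(i). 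Taking the supremum over $F^*$ gives $U(x(t))\le e^{-t}C_0(D(\bm{\alpha}(\bar{x})),0)$, and combining with the lower bound yields $\snm{U(x(t))}\le\alpha_3(\bar{x})e^{-t}$.

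I expect the upper bound to be the main obstacle, for two reasons. First, one must verify $x(t)\in\mathcal{L}_F(\bm{\alpha}(\bar{x}))$ before \cref{lem:u0-}(ii) is even applicable, which is exactly why $\bm{\alpha}(\bar{x})$ is defined as a componentwise maximum involving $F(x_0)+C_1(R(\alpha_0(\bar{x})))$. Second, one must carefully pass the infimum over $z$ through the monotone map $s\mapsto C_0(s,0)$ so that $\inf_zC_0(\nm{z},0)=C_0(\inf_z\nm{z},0)$ can be traded for the single global constant $D(\bm{\alpha}(\bar{x}))$; this is where the structural role of the representation in \cref{lem:u0-} is essential. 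The lower bound, by contrast, is essentially bookkeeping on top of \cref{thm:Ax-b-exp}.
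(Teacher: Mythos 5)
Your proposal is correct and follows essentially the same route as the paper: establish $x(t)\in\mathcal L_F(\bm{\alpha}(\bar{x}))$ via \cref{lem:bd-xt} and the $C_1$ estimate, apply \cref{lem:u0-}(ii) together with \cref{eq:exp-rate-E} to get the upper bound $e^{-t}C_0(D(\bm{\alpha}(\bar{x})),0)$, and combine with the lower bound $-\alpha_2(\bar{x})\nm{Ax(t)-b}$ fed by \cref{thm:Ax-b-exp}. The only cosmetic difference is that you re-derive the lower bound inline with the surrogate $\widetilde{x}(t)$ rather than citing \cref{lem:lw-bd-U}, which encapsulates exactly that computation.
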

\begin{proof}
	Let us firstly prove that $x(t)\in\mathcal L_F(\bm{\alpha}(\bar{x}))$ for all $t>0$. Thanks to \cref{lem:bd-xt}, we have $\nm{x(t)}\leq R(\alpha_0(\bar{x}))$. Analogously to \cref{eq:f0-fhat}, for $1\leq j\leq m$, it is not hard to obtain $	f_{j}(x(t))-f_{j}(x_0)\leq C_1(\nm{x(t)})\leq C_1(R(\alpha_0(\bar{x})))$ as $C_1(\cdot)$ defined by \cref{eq:C1} is increasing,
	which further implies that $F(x(t))\leq
	F(x_0)+C_1(R(\alpha_0(\bar{x})))\leq\bm{\alpha}(\bar{x})$.
	
	Then, for any $\widehat{x}\in\Omega$, according to \cref{eq:exp-rate-E}, we have
	\begin{equation*}
		\begin{aligned}
			{}& 	\min_{1 \leq j \leq m}\left[	f_j(x(t))-f_j(\widehat{x})\right]
			=	{}	\min_{1\leq j\leq m}\pi_{j}(x(t),\xi(t);\widehat{x},0)\leq e^{-t}C_0(\nm{\widehat{x}},0),\quad\forall\,t>0.
		\end{aligned}
	\end{equation*}
	According to \cref{assum:alpha-pw}, $\mathcal L_F(\bm{\alpha}_*)\cap\Omega$ is nonempty and so is $\mathcal L_F(\bm{\alpha}(\bar{x}))\cap\Omega$ since $\bm{\alpha}(\bar{x})=\max\{\bm{\alpha}_*,F(x_0)+C_1(R(\alpha_0(\bar{x})))\}$.
	Noticing that $x(t)\in\ \mathcal L_F({\bm{\alpha}}(\bar{x}))$, by using \cref{lem:u0-} we obtain the upper bound estimate
	\[
	\begin{aligned}
		{}&	U(x(t))=\sup_{F^*\in F(P_w\cap \mathcal L_F(\bm{\alpha}(\bar{x})))}\inf_{\widehat{x}\in F^{-1}(F^*)\cap\Omega}\min_{1\leq j\leq m} \left[f_j(x(t)) -f_j(\widehat{x})\right]\\
		\leq {}&e^{-t}\sup_{F^*\in F(P_w\cap \mathcal L_F(\bm{\alpha}(\bar{x})))}\inf_{\widehat{x}\in F^{-1}(F^*)\cap\Omega}C_0(\nm{\widehat{x}},0)\leq e^{-t}
		C_0(D(\bm\alpha(\bar{x})),0).
	\end{aligned}
	\]
	In addition, by \cref{lem:bd-xt} we have $\nm{x(t)}\leq R(\alpha_0(\bar{x}))$ for all $t>0$, and thus by using \cref{lem:lw-bd-U,thm:Ax-b-exp}, we find the lower bound estimate
	\[
	U(x(t))\geq -\alpha_2(\bar{x})\nm{Ax(t)-b}\geq -\alpha_2(\bar{x})C_0(\alpha_1(\bar{x}),1+\alpha_2(\bar{x}))e^{-t}.
	\]
	Finally, we get the desired estimate $		\snm{U(x(t))}\leq \alpha_3(\bar{x}) e^{-t}$ and complete the proof.
\end{proof} 

\section{An Accelerated Multiobjective Primal-Dual Method}
\label{sec:semi-ampd}
Our continuous model together with its Lyapunov analysis and convergence rate proof paves the way for designing and analyzing first-order  methods for solving \cref{eq:lcmop}. In this part, we present an implicit-explicit (IMEX) scheme that results in an accelerated multiobjective primal-dual method with a quadratic programming subproblem; see \cref{algo:amd-qp}. Based on the discrete Lyapunov analysis, we establish the convergence rates $\mathcal O(1/k)$ and $\mathcal O(1/k^2)$ of the feasibility violation $\nm{Ax_k-b}$ and the objective gap $\snm{U(x_k)}$, respectively for convex case $\mu=0$ and strongly convex case $\mu>0$.
\subsection{Numerical scheme}
Observe that \cref{eq:ampd} involves the second-order derivative in time. To avoid this, let us start from the equivalent first-order system \cref{eq:ampd-sys} with $\beta=-\mu$ (which satisfies \cref{eq:alpha}):
\begin{equation}\label{eq:ampd-sys-dis}
	\left\{
	\begin{aligned}
		{}&\theta\xi'= Av - b,\\	
		{}&		x' = v-x,\\
		{}&		\gamma v' = \mu(x-v) -A^\top\xi-	\proj_{C(x)}(w-A^\top\xi),
	\end{aligned}
	\right.
\end{equation}
with $w:=-\mu(v-x)+\gamma  x'-\gamma v'$. Given the current iteration $(x_k,v_k,\xi_k)$ and the step size $\alpha_k>0$, compute the predictions $	y_k = (x_k+\alpha_kv_k)/(1+\alpha_k) $ and $\whk=
\xi_k+ \alpha_k/\theta_k \left(
Av_k-b\right)$,
and update $(x_{k+1},v_{k+1},\xi_{k+1})$ by the IMEX scheme for \cref{eq:ampd-sys-dis}:
\begin{subnumcases}{\label{eq:imex-qp}}
	\label{eq:imex-qp-xik1}
	\theta_k	\dfrac{\xi_{k+1}-\xi_k}{\alpha_k}={}Av_{k+1}-b,\\
	\label{eq:imex-qp-xk1}
	\dfrac{x_{k+1}-x_k}{\alpha_k}={}v_{k+1}-x_{k+1},\\
	\label{eq:imex-qp-vk1}
	\gamma_k\dfrac{v_{k+1}-v_k}{\alpha_k}={}\mu(y_k-v_{k+1})-A^\top\whk-\textbf{proj}_{C(y_k)}\big(w_{k+1}-A^\top\whk\big),
\end{subnumcases}
where $	w_{k+1} ={} -\mu(v_{k+1}-y_k)+\gamma_k(x_{k+1}-x_k)/\alpha_k-\gamma_k(v_{k+1}-v_k)/\alpha_k$.
The scaling parameter equations in \cref{eq:theta-gama} are discretized implicitly
\begin{equation}\label{eq:thetak-gamak}
	\frac{\theta_{k+1}-\theta_k}{\alpha_k}=-\theta_{k+1},\quad
	\frac{\gamma_{k+1}-\gamma_k}{\alpha_k}=\mu-\gamma_{k+1}.
\end{equation}

Let us discuss the solvability of the IMEX scheme \cref{eq:imex-qp}. Based on \eqref{eq:imex-qp-vk1}, a simple calculation leads to
\[
(\gamma_k+\mu\alpha_k)/\alpha_kv_{k+1} =\bar{v}_k+\gamma_kx_k/(1+\alpha_k)-\proj_{C(y_k)}\left(\bar{v}_k-\eta_kv_{k+1}\right),
\]
where $\eta_k=	(\gamma_k+\mu\alpha_k)/\alpha_k-\gamma_k/(1+\alpha_k)$ and
$
\bar{v}_k=	(\gamma_kv_k+\mu\alpha_k y_k)/\alpha_{k}-\gamma_kx_k/(1+\alpha_k)-A^\top\whk
$. This is an implicit equation and  by \cite[Appendix A]{luo_accelerated_2025}, we have \[
v_{k+1} = \alpha_k\big(\bar{v}_k+\gamma_kx_k/(1+\alpha_k)-v_{k+1}^{\rm QP}\big)/(\gamma_k+\mu\alpha_k),
\]
with $v_{k+1}^{\rm QP}={}\proj_{C(y_k)}\big({\alpha_k^{-1}\gamma_k(v_k-x_k)+\mu (y_k-x_k)- A^\top \whk}\big)$.
Note that this involves the projection onto $C(y_k)=\conv{\nabla f_1(y_k),\cdots,\nabla f_m(y_k)}$ and can be transformed into a quadratic programming over the probability simplex $\Delta_m$. This is also similar with the dual approach used in multiobjective gradient methods; see \cite{fliege_steepest_2000,luo_accelerated_2025,Sonntag2024,tanabe2019,Tanabe2023a}.

\begin{algorithm}[H]
	\caption{AMPD-QP for solving \cref{eq:lcmop} with $f_j\in\mathcal S_{\mu_j,L_j}^{1,1}(\R^n)$}
	\label{algo:amd-qp}
	\begin{algorithmic}[1]
		\REQUIRE  Problem parameters: $L=\max_{1\leq j\leq m}L_j>0$ and $\mu=\min_{1 \leq j \leq m}\mu_j\geq 0$.\\
		~~~~~~Initial values: $	x_0,v_0\in\R^n$ and $\theta,\,\gamma_0>0$.\\
		~~~~~~KKT residual tolerance: $ \epsilon>0$.	
		\FOR{$k=0,1,\cdots$}
		\STATE Compute  ${\rm KKT}(x_k,\xi_k)$ by \cref{eq:kkt-res}.
		\IF{${\rm KKT}(x_k,\xi_k)\leq \epsilon$}
		\RETURN An approximated solution $x_k\in \mathbb{R}^n$ to \cref{eq:lcmop}.
		\ELSE
		\STATE  Find $\alpha_k>0$ satisfying \cref{eq:cond-ak}.
		\STATE   $\theta_{k+1}=\theta_k/(1+\alpha_k)$,  $\gamma_{k+1} = (\gamma_k + \mu\alpha_k)/(1+\alpha_k)$
		\STATE    $y_k = (x_k + \alpha_k v_k)/(1+\alpha_k),\,\brk = \xi_k+\alpha_k\theta_k^{-1}(Av_k-b)$
		\STATE   $v_{k+1}^{\mathrm{QP}} = \proj_{C(\brxk)}\left( {\alpha_k}^{-1} \gamma_k(v_k-x_k)+ \mu(y_k-x_k) -A^\top\brk\right)$
		\STATE  $v_{k+1} = (\gamma_k+\mu\alpha_k)^{-1}\big(\gamma_k v_k + \mu\alpha_k y_k -\alpha_kA^\top\brk- \alpha_k v_{k+1}^{\mathrm{QP}}\big)$
		\STATE  $\xi_{k+1} = \xi_k + \alpha_k\theta_k^{-1} (Av_{k+1}-b),x_{k+1} = (x_k+\alpha_k v_{k+1})/(1+\alpha_k)$
		\ENDIF
		\ENDFOR
		\ENSURE An  approximated solution $x_k\in \mathbb{R}^n$ to \cref{eq:lcmop}.
	\end{algorithmic}
\end{algorithm}

In \cref{algo:amd-qp}, we present an equivalent form of the IMEX scheme \cref{eq:imex-qp} with the step size constraint
\begin{equation}\label{eq:cond-ak}
	\alpha_k^2(L\theta_{k}/(1+\alpha_k)+\nm{A}^2) \leq \gamma_k\theta_k.
\end{equation}
It is called an Accelerated Multiobjective Primal-Dual method with a Quadratic Programming subproblem (AMPD-QP for short).
For a given pair $(\theta_k,\gamma_k)$, it is easy to choose proper $\alpha_k>0$ satisfying the constraint \cref{eq:cond-ak}. For instance, we can simply take $\alpha_k^2(L\theta_k+\nm{A}^2)=\gamma_k\theta_k$.
As the $\epsilon$-approximation solution given by \cref{def:approx} is not convenient for us to check. Thus, in \cref{algo:amd-qp}, we also propose a stopping criterion via the KKT residual
\begin{equation}\label{eq:kkt-res}
	{\rm KKT}(x_k,\xi_k):=\sqrt{\nm{Ax_k-b}^2+\|A^\top\xi_k+\proj_{C( x_k)}(-A^\top\xi_k)\|^2}.
\end{equation}

\subsection{Discrete Lyapunov analysis}
For the convergence rate proof, we shall provide the discrete Lyapunov analysis.
\begin{lemma}
	\label{lem:key-id-imex-qp}
	Let $\{x_k,v_k,\xi_k\}$ be generated by \cref{eq:imex-qp}, then we have
	\begin{equation}\label{eq:key-id-imex-qp}
		\begin{aligned}
			{}&	\big\langle \proj_{C(y_{k})}\big(w_{k+1}-A^\top\whk\big),x_{k+1}-x_k\big\rangle = {}{\rm max}_{1\leq j\leq m}	\dual{\nabla f_j(y_{k}) ,  x_{k+1}-x_k}.
		\end{aligned}
	\end{equation}
\end{lemma}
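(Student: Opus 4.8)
The plan is to show that the vector being projected, $w_{k+1}-A^\top\whk$, differs from its projection $p_{k+1}:=\proj_{C(y_k)}(w_{k+1}-A^\top\whk)$ by exactly a positive multiple of $x_{k+1}-x_k$. Once this is established, the variational inequality characterizing the metric projection onto the convex set $C(y_k)$ turns directly into the statement that $p_{k+1}$ maximizes the linear functional $\dual{x_{k+1}-x_k,\cdot}$ over $C(y_k)$. Because the maximum of a linear functional over a convex hull of finitely many vectors is attained at one of those vectors, it then equals $\max_{1\leq j\leq m}\dual{\nabla f_j(y_k),x_{k+1}-x_k}$, which is precisely the right-hand side of \cref{eq:key-id-imex-qp}.

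First I would derive the key algebraic relation. Solving \eqref{eq:imex-qp-vk1} for the term $\gamma_k(v_{k+1}-v_k)/\alpha_k$ gives
\[
\gamma_k\frac{v_{k+1}-v_k}{\alpha_k}=\mu(y_k-v_{k+1})-A^\top\whk-p_{k+1},
\]
and substituting this into the definition $w_{k+1}=-\mu(v_{k+1}-y_k)+\gamma_k(x_{k+1}-x_k)/\alpha_k-\gamma_k(v_{k+1}-v_k)/\alpha_k$ makes the two $\mu(v_{k+1}-y_k)$ contributions cancel, leaving
\[
w_{k+1}-A^\top\whk=\frac{\gamma_k}{\alpha_k}(x_{k+1}-x_k)+p_{k+1}.
\]
Equivalently, $(w_{k+1}-A^\top\whk)-p_{k+1}=\gamma_k(x_{k+1}-x_k)/\alpha_k$.

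Next I would invoke the characterization of the projection onto the convex set $C(y_k)$: for every $c\in C(y_k)$ one has $\dual{(w_{k+1}-A^\top\whk)-p_{k+1},\,c-p_{k+1}}\leq 0$. Inserting the relation from the previous step and using $\gamma_k/\alpha_k>0$ yields $\dual{x_{k+1}-x_k,\,c-p_{k+1}}\leq 0$ for all $c\in C(y_k)$, so that $p_{k+1}$ attains $\max_{c\in C(y_k)}\dual{x_{k+1}-x_k,c}$. Since $C(y_k)=\conv{\nabla f_j(y_k)}_{j=1}^m$, this maximum coincides with $\max_{1\leq j\leq m}\dual{\nabla f_j(y_k),x_{k+1}-x_k}$, which gives \cref{eq:key-id-imex-qp}. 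I expect the only delicate point to be the cancellation in the algebraic step: since $w_{k+1}$ itself contains the term $\gamma_k(v_{k+1}-v_k)/\alpha_k$ that also appears on the right-hand side of \eqref{eq:imex-qp-vk1}, the substitution must be carried out consistently to avoid double-counting; everything afterwards is a routine use of the projection inequality and the linearity of the functional. It is worth noting that the resulting identity is the discrete counterpart of \cref{eq:key-q-gradj} used in the continuous Lyapunov analysis.
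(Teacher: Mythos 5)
Your proof is correct and follows essentially the same route as the paper: both arguments reduce to the variational inequality $\dual{\nabla f_j(y_k)-p_{k+1},x_{k+1}-x_k}\leq 0$ for the projection and then identify the maximum of the linear functional over $C(y_k)=\conv{\nabla f_j(y_k)}_{j=1}^m$ with the maximum over the generating gradients. Your explicit derivation of the identity $(w_{k+1}-A^\top\whk)-p_{k+1}=\gamma_k(x_{k+1}-x_k)/\alpha_k$ is exactly the computation the paper leaves implicit when it asserts that inequality ``by \eqref{eq:imex-qp-vk1}'', so your write-up is, if anything, slightly more complete.
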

\begin{proof}
	By \eqref{eq:imex-qp-vk1} we claim that
	\[
	\begin{aligned}
		{}&	\big\langle \nabla f_j(y_{k})-	\proj_{C(y_{k})}\big(w_{k+1}-A^\top\whk\big),		
		x_{k+1}-x_k
		\big\rangle\leq 0,
	\end{aligned}
	\]
	for all $1\leq j\leq m$.
	Clearly, this exists $\lambda_k=(\lambda_{k,1},\cdots,\lambda_{k,m})^\top\in\Delta_m$ such that $\proj_{C(y_{k})}\big(w_{k+1}-A^\top\whk\big)=\sum_{j=1}^{m}\lambda_{k,j}\nabla f_j(y_k)$. Consequently, this implies that
	\[
	\begin{aligned}
		{}&	\max_{1\leq j\leq m}	\dual{\nabla f_j(y_{k}),x_{k+1}-x_k}
		\leq{} \big\langle \proj_{C(y_{k})}\big(w_{k+1}-A^\top\whk\big),x_{k+1}-x_k\big\rangle\\
		={}&\sum_{j=1}^{m}\lambda_{k,j}	\dual{ \nabla f_j(y_{k}),x_{k+1}-x_k }
		\leq {}\max_{1\leq j\leq m}	\dual{\nabla f_j(y_{k}),x_{k+1}-x_k }.
	\end{aligned}
	\]
	This leads to the identity \cref{eq:key-id-imex-qp} and finishes the proof.
\end{proof}

Following \cref{eq:Li-mnag}, define a discrete Lyapunov function by that
\begin{equation}\label{eq:Ek}
	\begin{aligned}
		\mathcal E_k(\widehat{x},\widehat{\xi}) :={}& \min_{1 \leq j \leq m}\pi_j(x_k,\xi_k;\widehat{x},\widehat{\xi})+\frac{\gamma_k}{2}\nm{v_k-\widehat{x}}^2+\frac{\theta_k}{2}\big\|\xi_k-\widehat{\xi}\big\|^2,\quad k\in\mathbb N,
	\end{aligned}
\end{equation}
where $\widehat{x}\in\Omega$ and $\widehat{\xi}\in\R^r$ are arbitrary.
\begin{theorem}
	\label{thm:conv-imex-qp}
	Let $\{x_k,v_k,\xi_k\}$ be generated by \cref{eq:imex-qp} with the step size constraint \cref{eq:cond-ak}. Then for any $\widehat{x}\in\Omega$ and $\widehat{\xi}\in\R^r$, we have
	\begin{equation}\label{eq:diff-Ek}		\mathcal{E}_{k+1}(\widehat{x},\widehat{\xi})-\mathcal{E}_{k}(\widehat{x},\widehat{\xi})\leq -\alpha_k\mathcal{E}_{k+1}(\widehat{x},\widehat{\xi}),\quad k\in\mathbb N,
	\end{equation}
	which implies that
	\begin{equation}\label{eq:conv-imex-qp-2}
		\min_{1\leq j\leq m}\pi_j(x_k,\xi_k;\widehat{x},\widehat{\xi})+\frac{\gamma_k}{2}\nm{v_k-\widehat{x}}^2+\frac{\theta_k}{2}\big\|\xi_{k}-\widehat{\xi}\big\|^2 \leq \theta_k/\theta_0 C_0(\nm{\widehat{x}},\big\|\widehat{\xi}\|),
	\end{equation}
	where $C_0(\cdot,\cdot)$ is defined by \cref{eq:bd-est-E0}.
\end{theorem}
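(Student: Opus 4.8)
The plan is to establish the one-step contraction \cref{eq:diff-Ek} — equivalently $(1+\alpha_k)\mathcal E_{k+1}(\widehat x,\widehat\xi)\le\mathcal E_k(\widehat x,\widehat\xi)$ — and then deduce \cref{eq:conv-imex-qp-2} by telescoping, mirroring throughout the continuous argument of \cref{thm:exp-rate-E} with finite differences replacing derivatives. First I would use \cref{eq:thetak-gamak} in the exact forms $\theta_k=(1+\alpha_k)\theta_{k+1}$ and $\gamma_k+\mu\alpha_k=(1+\alpha_k)\gamma_{k+1}$ to rewrite the two quadratic increments, so that the task becomes bounding an objective increment together with $G:=\frac{\gamma_k+\mu\alpha_k}{2}\nm{v_{k+1}-\widehat x}^2-\frac{\gamma_k}{2}\nm{v_k-\widehat x}^2$ and $H:=\frac{\theta_k}{2}\big(\nm{\xi_{k+1}-\widehat\xi}^2-\nm{\xi_k-\widehat\xi}^2\big)$.

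The delicate object is the nonsmooth increment $(1+\alpha_k)\min_j\pi_j(x_{k+1})-\min_j\pi_j(x_k)$, which I would split as $[\min_j\pi_j(x_{k+1})-\min_j\pi_j(x_k)]+\alpha_k\min_j\pi_j(x_{k+1})$ and treat with \emph{two different selections}. For the difference, picking the index $j_k$ attaining the minimum at step $k$ gives the clean bound $\pi_{j_k}(x_{k+1})-\pi_{j_k}(x_k)$, the discrete surrogate of \cref{lem:min-fi-dt-fi}; since $\widehat x\in\Omega$ this reduces to the single function $f_{j_k}$, and \cref{lem:gd-lem} anchored at $y_k$ on the pair $(x_{k+1},x_k)$ turns it into $\dual{\nabla f_{j_k}(y_k),x_{k+1}-x_k}$ plus a penalty $\frac{L}{2}\nm{y_k-x_{k+1}}^2$. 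Because the increment direction is $x_{k+1}-x_k=\alpha_k(v_{k+1}-x_{k+1})$ by \cref{eq:imex-qp-xk1}, the projection inequality $\dual{\nabla f_{j_k}(y_k)-p_k,x_{k+1}-x_k}\le0$ — with $p_k:=\proj_{C(y_k)}(w_{k+1}-A^\top\whk)$, established in the proof of \cref{lem:key-id-imex-qp} from $w_{k+1}-A^\top\whk-p_k=\frac{\gamma_k}{\alpha_k}(x_{k+1}-x_k)$ — replaces $\nabla f_{j_k}(y_k)$ by $p_k$. For the second piece I would instead use the projection weights $\lambda_k\in\Delta_m$ of \cref{lem:key-id-imex-qp}, writing $p_k=\sum_j\lambda_{k,j}\nabla f_j(y_k)$ and bounding $\min_j\pi_j(x_{k+1})\le\bar{f}_k(x_{k+1})-\bar{f}_k(\widehat x)+\dual{\widehat\xi,Ax_{k+1}-b}$ with the aggregate $\bar{f}_k:=\sum_j\lambda_{k,j}f_j\in\mathcal S_{\mu,L}^{1,1}(\R^n)$; applying \cref{lem:gd-lem} to $\bar{f}_k$ on the pair $(x_{k+1},\widehat x)$ yields $\dual{p_k,x_{k+1}-\widehat x}$ — now genuinely carrying $p_k$ — a favorable $-\frac{\alpha_k\mu}{2}\nm{y_k-\widehat x}^2$, and a further penalty $\frac{\alpha_k L}{2}\nm{y_k-x_{k+1}}^2$.

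The purpose of this split is that both objective pieces now carry the same vector $p_k$, so they combine into $\alpha_k\dual{p_k,v_{k+1}-\widehat x}$ via $(x_{k+1}-x_k)+\alpha_k(x_{k+1}-\widehat x)=\alpha_k(v_{k+1}-\widehat x)$. I would then expand $G$ and $H$ by the discrete three-point identity (the finite-difference analogue of \cref{eq:3id}), extracting the favorable terms $-\frac{\gamma_k}{2}\nm{v_{k+1}-v_k}^2$ and $-\frac{\theta_k}{2}\nm{\xi_{k+1}-\xi_k}^2$ plus cross terms, and substitute $\gamma_k(v_{k+1}-v_k)=\alpha_k[\mu(y_k-v_{k+1})-A^\top\whk-p_k]$ from \cref{eq:imex-qp-vk1} and $\theta_k(\xi_{k+1}-\xi_k)=\alpha_k(Av_{k+1}-b)$ from \cref{eq:imex-qp-xik1}. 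Three cancellations then occur exactly as in the continuous proof: the $-\alpha_k\dual{p_k,v_{k+1}-\widehat x}$ from the primal update annihilates the objective contribution $\alpha_k\dual{p_k,v_{k+1}-\widehat x}$; the $\mu$-terms collapse to $-\frac{\alpha_k\mu}{2}\nm{y_k-v_{k+1}}^2\le0$; and, using $A\widehat x=b$, all terms coupling $\widehat\xi$ with $A$ recombine (as the vanishing last line of \cref{eq:I2}) into $\alpha_k\dual{Av_{k+1}-b,\xi_{k+1}-\whk}=\frac{\alpha_k^2}{\theta_k}\dual{Av_{k+1}-b,A(v_{k+1}-v_k)}$, since $\xi_{k+1}-\whk=\frac{\alpha_k}{\theta_k}A(v_{k+1}-v_k)$.

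The main obstacle is the closing quadratic bookkeeping, and this is exactly where \cref{eq:cond-ak} enters. After the cancellations only three unfavorable terms survive: the two Lipschitz penalties, which add up to $\frac{L\alpha_k^2}{2(1+\alpha_k)}\nm{v_{k+1}-v_k}^2$ through $y_k-x_{k+1}=-\frac{\alpha_k}{1+\alpha_k}(v_{k+1}-v_k)$, and the coupling term above. Bounding the coupling term by Young's inequality with $\nm{A}$ splits it so that its $\nm{Av_{k+1}-b}^2$ part is absorbed by the favorable $-\frac{\theta_k}{2}\nm{\xi_{k+1}-\xi_k}^2=-\frac{\alpha_k^2}{2\theta_k}\nm{Av_{k+1}-b}^2$, leaving $\frac{\alpha_k^2\nm{A}^2}{2\theta_k}\nm{v_{k+1}-v_k}^2$; collecting all multiples of $\nm{v_{k+1}-v_k}^2$ against $-\frac{\gamma_k}{2}\nm{v_{k+1}-v_k}^2$ reduces the whole inequality to $\frac{L\alpha_k^2}{1+\alpha_k}+\frac{\alpha_k^2\nm{A}^2}{\theta_k}\le\gamma_k$, which after multiplication by $\theta_k$ is precisely \cref{eq:cond-ak}. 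Arranging the $j_k$/$\lambda_k$ split so that the $p_k$ contributions cancel rather than leaving an uncontrolled $\dual{\nabla f_{j_k}(y_k)-p_k,\cdot}$ remainder, and then matching these coefficients exactly, is the crux. Finally, \cref{eq:conv-imex-qp-2} follows from \cref{eq:diff-Ek} by telescoping: iterating gives $\mathcal E_k\le\big(\prod_{i=0}^{k-1}(1+\alpha_i)\big)^{-1}\mathcal E_0$, and since $\theta_{k+1}=\theta_k/(1+\alpha_k)$ yields $\prod_{i=0}^{k-1}(1+\alpha_i)^{-1}=\theta_k/\theta_0$, combining with the initial bound $\mathcal E_0\le C_0(\nm{\widehat x},\nm{\widehat\xi})$ — obtained verbatim as in \cref{thm:exp-rate-E} via \cref{eq:bd-est-E0} — completes the proof.
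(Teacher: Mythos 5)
Your proposal is correct and follows essentially the same route as the paper's proof: the same decomposition into objective, primal-quadratic and dual-quadratic increments, the same use of \cref{lem:gd-lem} anchored at $y_k$, the same cancellation of the projection vector $p_k$ against the $v$-update via $(x_{k+1}-x_k)+\alpha_k(x_{k+1}-\widehat{x})=\alpha_k(v_{k+1}-\widehat{x})$, and the same closing reduction to \cref{eq:cond-ak}. The only differences are cosmetic (selecting the minimizing index plus the projection inequality rather than the $\max_j$ bound of \cref{lem:key-id-imex-qp}, and Young's inequality in place of the three-term identity for the dual cross term), so no further comparison is needed.
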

\begin{proof}
	Notice that by \cref{eq:thetak-gamak} we have $\theta_{k+1}=\theta_k/(1+\alpha_k)$. Therefore, if \cref{eq:diff-Ek}	holds true, then it follows directly that
	\[
	\mathcal{E}_{k}(\widehat{x},\widehat{\xi})\leq \frac{\mathcal{E}_{k-1}(\widehat{x},\widehat{\xi})}{1+\alpha_{k-1}}=\frac{\theta_k\mathcal{E}_{k-1}(\widehat{x},\widehat{\xi})}{\theta_{k-1}}\leq \cdots\leq
	\frac{\theta_k}{\theta_0} \mathcal{E}_0(\widehat{x},\widehat{\xi}).
	\]
	Similarly with the proof of \cref{eq:exp-rate-E}, we have $\mathcal{E}_0(\widehat{x},\widehat{\xi})\leq C_0(\nm{\widehat{x}},\big\|\widehat{\xi}\|)$, which implies \cref{eq:conv-imex-qp-2}. Henceforth, it is sufficient to \cref{eq:diff-Ek}. Observe the decomposition
	\[
	\begin{aligned}
		\mathcal{E}_{k+1}(\widehat{x},\widehat{\xi})-\mathcal{E}_{k}(\widehat{x},\widehat{\xi})={}& \min_{1\leq j\leq m}\pi_j(x_{k+1},\xi_{k+1};\widehat{x},\widehat{\xi})- \min_{1\leq j\leq m}\pi_j(x_k,\xi_k;\widehat{x},\widehat{\xi})\\
		{}&\quad +\frac{\gamma_{k+1}}{2}\left\|v_{k+1}-\widehat{x}\right\|^{2}-\frac{\gamma_{k}}{2}\left\|v_{k}-\widehat{x}\right\|^{2}\\
		{}&\qquad+ \frac{\theta_{k+1}}{2}\big\|\xi_{k+1}-\widehat{\xi}\big\|^2
		-\frac{\theta_{k}}{2}\big\|\xi_{k}-\widehat{\xi}\big\|^2
		:={}\mathbb I_1+\mathbb I_2+\mathbb I_3.
	\end{aligned}
	\]
	It can be proved that
	\begin{equation}\label{eq:est-I1+I2}\small
		\begin{aligned}
			\mathbb I_1+\mathbb I_2
			\leq &-\alpha_k	\min_{1\leq j\leq m}\pi_j(x_{k+1},\xi_{k+1};\widehat{x},\widehat{\xi})-\frac{\alpha_k\gamma_{k+1}}{2}\nm{v_{k+1}-\widehat{x}}^2\\
			{}&\quad -\frac{\gamma_{k}}{2}\left\|v_{k+1}-v_{k}\right\|^{2}+\frac{L(1+\alpha_k)}{2}\nm{x_{k+1}-y_k}^2
			-\alpha_{k}\big\langle Av_{k+1}-b,\whk-\widehat{\xi}\,\big\rangle,
		\end{aligned}
	\end{equation}
	and
	\begin{equation}\label{eq:est-I3}
		\mathbb I_3\leq  -\frac{\alpha_k\theta_{k+1}}{2}\big\|\xi_{k+1}-\widehat{\xi}\big\|^2
		+\frac{\theta_{k}}{2}\big\|\xi_{k+1}-\whk\big\|^2
		+\alpha_{k}\big\langle Av_{k+1}-b,\whk-\widehat{\xi}\,\big\rangle.
	\end{equation}
	Combining these two estimates gives
	\[
	\begin{aligned}
		\mathcal{E}_{k+1}(\widehat{x},\widehat{\xi})-\mathcal{E}_{k}(\widehat{x},\widehat{\xi})\leq {}& -\alpha_k		\mathcal{E}_{k+1}(\widehat{x},\widehat{\xi})-\frac{\gamma_{k}}{2}\left\|v_{k+1}-v_{k}\right\|^{2}\\
		{}&\quad +\frac{L(1+\alpha_k)}{2}\nm{x_{k+1}-y_k}^2	+\frac{\theta_{k}}{2}\big\|\xi_{k+1}-\whk\big\|^2.
	\end{aligned}
	\]
	Note that $\xi_{k+1}-\whk = \alpha_k/\theta_kA(v_{k+1}-v_k)$ and
	\[
	x_{k+1}-y_k = \frac{x_k+\alpha_k v_{k+1}}{1+\alpha_k}-\frac{x_k+\alpha_k v_k}{1+\alpha_k} =\frac{ \alpha_k(v_{k+1}-v_k)}{1+\alpha_k},
	\]
	which further implies that
	\[
	\begin{aligned}
		\mathcal{E}_{k+1}(\widehat{x},\widehat{\xi})-\mathcal{E}_{k}(\widehat{x},\widehat{\xi})\leq {}& -\alpha_k		\mathcal{E}_{k+1}(\widehat{x},\widehat{\xi}) +\frac{\Delta_k}{2\theta_k}\left\|v_{k+1}-v_{k}\right\|^{2},
	\end{aligned}
	\]
	where $\Delta_k:=L\alpha_k^2\theta_{k}/(1+\alpha_k)+\alpha_k^2\nm{A}^2-\gamma_{k}\theta_k\leq 0$.
	Consequently, the contraction estimate \cref{eq:diff-Ek} follows immediately.
	
	To complete the proof of this theorem, it remains to verify \cref{eq:est-I3,eq:est-I1+I2}. An evident calculation yields that
	\begin{align}
		\mathbb I_3={}&\frac{\theta_{k+1}-\theta_{k}}{2}\big\|\xi_{k+1}-\widehat{\xi}\big\|^2
		+\frac{\theta_{k}}{2}
		\left(\big\|\xi_{k+1}-\widehat{\xi}\big\|^2 -
		\big\|\xi_{k}-\widehat{\xi}\big\|^2\right)\notag\\
		={}&	-\frac{\alpha_k\theta_{k+1}}{2}\big\|\xi_{k+1}-\widehat{\xi}\big\|^2
		-\frac{\theta_{k}}{2}\nm{\xi_{k+1}-\xi_k}^2
		+\theta_{k}\big\langle\xi_{k+1}-\xi_k,\xi_{k+1}-\widehat{\xi}\big\rangle.
		\label{eq:I1-im-x-im-l}
	\end{align}
	We then insert $\whk$ into the last cross term to obtain
	\[
	\begin{aligned}
		\mathbb I_3={}&-\frac{\alpha_k\theta_{k+1}}{2}\big\|\xi_{k+1}-\widehat{\xi}\big\|^2
		-\frac{\theta_{k}}{2}\nm{\xi_{k+1}-\xi_k}^2\\
		{}&	\quad+\theta_{k}\big\langle \xi_{k+1}-\xi_k,\whk-\widehat{\xi}\,\big\rangle+\theta_{k}\big\langle \xi_{k+1}-\xi_k,\xi_{k+1}-\whk\big\rangle.
	\end{aligned}
	\]
	Applying the three-term identity \cref{eq:3id} to the last cross term gives
	\[\small
	\begin{aligned}
		\mathbb I_3
		=&-\frac{\alpha_k\theta_{k+1}}{2}\big\|\xi_{k+1}-\widehat{\xi}\big\|^2
		+\frac{\theta_{k}}{2}\big(\big\|\xi_{k+1}-\whk\big\|^2-\big\|\xi_{k}-\whk\big\|^2\big)+\theta_{k}\big\langle \xi_{k+1}-\xi_k,\whk-\widehat{\xi}\,\big\rangle
		.
	\end{aligned}
	\]
	Dropping the negative term $-\big\|\xi_{k}-\whk\big\|^2$ and rewriting the last term (cf.\eqref{eq:imex-qp-xik1}), we get the desired estimate \cref{eq:est-I3}.
	
	To the end, let us prove \cref{eq:est-I1+I2}. It follows from  \cref{lem:gd-lem} that
	\[
	\begin{aligned}
		\mathbb I_1
		\leq 	{}&\max_{1\leq j\leq m}\big[f_j(x_{k+1})-f_j(x_k)+\big\langle\widehat{\xi},A(x_{k+1}-x_k)\big\rangle\big]\\
		\leq  {}&\max_{1\leq j \leq m}\dual{\nabla f_j(y_k),x_{k+1}-x_k}+\frac{L}{2}\nm{x_{k+1}-y_k}^2+\big\langle\widehat{\xi},A(x_{k+1}-x_k)\big\rangle.
	\end{aligned}
	\]
	Here, we mention an extension of the three-term identity \cref{eq:3id}:
	$$
	a\nm{u}^2-b\nm{w}^2=(a-b)\nm{u}^2-b\nm{u-w}^2+2b\dual{u,u-w},
	$$
	which holds true for all $a,b\in\R$ and $u,w\in\R^n$. Applying this to $\mathbb I_2$ gives
	\[\small
	\begin{aligned}
		\mathbb I_2={}&\frac{\gamma_{k+1}-\gamma_k}{2}\left\|v_{k+1}-\widehat{x}\right\|^{2}-\frac{\gamma_{k}}{2}\left\|v_{k+1}-v_{k}\right\|^{2}+\gamma_{k}\dual{v_{k+1}-v_{k},v_{k+1}-\widehat{x}}\\
		={}&\frac{\mu\alpha_{k}-\alpha_{k}\gamma_{k+1}}{2}\left\|v_{k+1}-\widehat{x}\right\|^{2}-\frac{\gamma_{k}}{2}\left\|v_{k+1}-v_{k}\right\|^{2}+\gamma_{k}\dual{v_{k+1}-v_{k},v_{k+1}-\widehat{x}} \quad\left(\text{by \cref{eq:thetak-gamak}}\right)\\
		={}&\big\langle\mu\alpha_{k}(y_{k}-v_{k+1})-\alpha_k A^\top\whk-\alpha_{k}\textbf{proj}_{C(y_{k})}\big(w_{k+1}-A^\top\whk\big),v_{k+1}-\widehat{x}\big\rangle\\
		{}&\quad-\frac{\gamma_{k}}{2}\left\|v_{k+1}-v_{k}\right\|^{2}+\frac{\mu\alpha_{k}-\alpha_{k}\gamma_{k+1}}{2}\left\|v_{k+1}-\widehat{x}\right\|^{2}\quad\left(\text{by \eqref{eq:imex-qp-vk1}}\right).
	\end{aligned}
	\]
	Similarly with the identity \cref{eq:3id}, we have
	\[
	\mu\alpha_k\dual{y_k-v_{k+1},v_{k+1}-\widehat{x}}=\frac{\mu\alpha_k}{2}\big(\nm{y_{k}-\widehat{x}}^2-\nm{v_{k+1}-\widehat{x}}^2-\nm{v_{k+1}-y_{k}}^2\big).
	\]
	Thanks to \eqref{eq:imex-qp-xk1} and \cref{lem:key-id-imex-qp}, we find that
	\[\small
	\begin{aligned}
		{}&	- \alpha_{k}\big\langle\textbf{proj}_{C(y_{k})}\big(w_{k+1}-A^\top\whk\big),v_{k+1}-\widehat{x}\big\rangle\\
		={}&-\big\langle\textbf{proj}_{C(y_{k})}\big(w_{k+1}-A^\top\whk\big),x_{k+1}-x_k\big\rangle-\alpha_{k}\big\langle\textbf{proj}_{C(y_{k})}\big(w_{k+1}-A^\top\whk\big),x_{k+1}-\widehat{x}\big\rangle\\
		={}&			
		-					{\rm max}_{1\leq j \leq m}
		\dual{\nabla f_j(y_k),x_{k+1}-x_k}-\alpha_{k}\big\langle\textbf{proj}_{C(y_{k})}\big(w_{k+1}-A^\top\whk\big),x_{k+1}-\widehat{x}\big\rangle.
	\end{aligned}
	\]
	Let  $\lambda_k=(\lambda_{k,1},\cdots,\lambda_{k,m})^\top\in\Delta_m$ be such that the projection admits the presentation  $\proj_{C(y_{k})}\big(w_{k+1}-A^\top\whk\big)=\sum_{j=1}^{m}\lambda_{k,j}\nabla f_j(y_k)$. Then we obtain from \cref{lem:gd-lem,assum:Lj-muj} that
	\[
	\begin{aligned}
		{}& -\alpha_{k}\big\langle\textbf{proj}_{C(y_{k})}\big(w_{k+1}-A^\top\whk\big),x_{k+1}-\widehat{x}\big\rangle
		={}-\alpha_{k}\sum_{j=1}^{m}\lambda_{k,j}\left\langle\nabla f_j(y_k),x_{k+1}-\widehat{x}\right\rangle\\
		\leq {}&\alpha_{k}\sum_{j=1}^{m}\lambda_{k,j}\big(f_j(\widehat{x})-f_j(x_{k+1})-\mu_j/2\nm{y_k-\widehat{x}}^2+L_j/2\nm{x_{k+1}-y_k}^2\big)\\
		\leq {}&-\alpha_{k}\min_{1\leq j\leq m}\left[f_j(x_{k+1})-f_j(z)\right]-\frac{\mu\alpha_k}{2}\nm{y_k-\widehat{x}}^2+\frac{L\alpha_k}{2}\nm{x_{k+1}-y_k}^2\\
		= {}&-\alpha_{k}\min_{1\leq j\leq m}\big[f_j(x_{k+1})-f_j(\widehat{x})+\big\langle\widehat{\xi},Ax_{k+1}-b\big\rangle\big]-\frac{\mu\alpha_k}{2}\nm{y_k-\widehat{x}}^2\\
		{}&\qquad+\frac{L\alpha_k}{2}\nm{x_{k+1}-y_k}^2+{\alpha_k}\big\langle\widehat{\xi},Ax_{k+1}-b\big\rangle.		
	\end{aligned}
	\]
	Plugging these pieces into the decomposition of $\mathbb I_2$ leads to \cref{eq:est-I1+I2} and thus completes the proof of this theorem.
\end{proof}
\subsection{Convergence rate estimate}
Let $\bar{x}\in\Omega $ be arbitrarily fixed. In what follows, we will use all the quantities defined by \cref{eq:alpha-barx}. Following the spirit of the continuous level in \cref{sec:rate-ampd}, we can derive the upper bounds of $\nm{Ax_k-b}$ and $\snm{U(x_k)}$ with respect to the sequence $\{\theta_k\}$. The final rate is given by the decay estimate of $\theta_k$.
\begin{lemma}\label{lem:bound-xk-norm}
	Let $\{x_k,v_k,\xi_k\}$ be generated by \cref{eq:imex-qp} with the step size constraint \cref{eq:cond-ak}. Then we have $\nm{x_k}\leq R(\alpha_0(\bar{x}))$ for all $k\in\mathbb N$.
\end{lemma}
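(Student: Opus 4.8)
The plan is to mirror the continuous-level argument of \cref{lem:bd-xt}, replacing the exponential decay \cref{eq:exp-rate-E} by the discrete contraction \cref{eq:conv-imex-qp-2}. First I would specialize the test pair to $(\widehat{x},\widehat{\xi})=(\bar{x},0)\in\Omega\times\R^r$ in \cref{eq:conv-imex-qp-2}, which gives
\[
\min_{1\leq j\leq m}\pi_j(x_k,\xi_k;\bar{x},0)\leq \frac{\theta_k}{\theta_0}\,C_0(\nm{\bar{x}},0).
\]
Since $\bar{x}\in\Omega$ forces $A\bar{x}=b$, the dual term drops out and $\pi_j(x_k,\xi_k;\bar{x},0)=f_j(x_k)-f_j(\bar{x})$. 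Because the minimum runs over the finite index set $\{1,\dots,m\}$, there is some $j_k$ attaining it, so that $f_{j_k}(x_k)-f_{j_k}(\bar{x})\leq (\theta_k/\theta_0)\,C_0(\nm{\bar{x}},0)$. Note that here, unlike the continuous case, the existence of the minimizing index is trivial and \cref{lem:min-fi-dt-fi} is not needed.

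The next step handles the scaling factor. From the implicit update $\theta_{k+1}=\theta_k/(1+\alpha_k)$ in \cref{eq:thetak-gamak} together with $\alpha_k>0$, the sequence $\{\theta_k\}$ is nonincreasing, hence $\theta_k/\theta_0\leq 1$ and
\[
f_{j_k}(x_k)\leq f_{j_k}(\bar{x})+C_0(\nm{\bar{x}},0).
\]
Then I would bound $f_{j_k}(\bar{x})$ in terms of the initial data exactly as in the derivation of \cref{eq:f0-fhat}: combining \cref{eq:L-ineq} with the triangle estimate \cref{eq:nablafj-hat} (and replacing $L_j$ by $L$, $\nm{\nabla f_j(0)}$ by $\max_{1\leq j\leq m}\nm{\nabla f_j(0)}$) yields $f_j(\bar{x})-f_j(x_0)\leq C_1(\nm{\bar{x}})$ for every $j$, whence $f_{j_k}(\bar{x})\leq \max_{1\leq j\leq m}\snm{f_j(x_0)}+C_1(\nm{\bar{x}})$. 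Inserting this into the previous display and recalling the definition of $\alpha_0(\bar{x})$ in \cref{eq:alpha-barx} gives $f_{j_k}(x_k)\leq \alpha_0(\bar{x})$, i.e. $x_k\in\mathcal L_{f_{j_k}}(\alpha_0(\bar{x}))$. An application of \cref{assum:j0} then delivers $\nm{x_k}\leq R_{j_k}(\alpha_0(\bar{x}))\leq R(\alpha_0(\bar{x}))$.

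I do not expect a genuine obstacle, since every ingredient is already available. The one point that needs care is conceptual rather than computational: $\alpha_0(\bar{x})$ was defined precisely so that the three quantities $C_0(\nm{\bar{x}},0)$, $C_1(\nm{\bar{x}})$, and $\max_{1\leq j\leq m}\snm{f_j(x_0)}$ aggregate into a single level-set radius that is independent of $k$. Crucially, the $k$-uniformity of the bound rests only on the monotonicity $\theta_k\leq\theta_0$, and not on any quantitative decay rate of $\theta_k$; this is what makes the same argument work for both the convex and strongly convex regimes simultaneously.
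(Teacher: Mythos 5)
Your proposal is correct and follows exactly the route the paper intends: specialize \cref{eq:conv-imex-qp-2} to $(\widehat{x},\widehat{\xi})=(\bar{x},0)$, use $\theta_k/\theta_0\leq 1$, bound $f_{j_k}(\bar{x})$ via the argument behind \cref{eq:f0-fhat} to land in the level set $\mathcal L_{f_{j_k}}(\alpha_0(\bar{x}))$, and invoke \cref{assum:j0}. This is precisely the discrete transcription of \cref{lem:bd-xt} that the paper omits, and your observation that only the monotonicity of $\theta_k$ (not its decay rate) is needed is accurate.
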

\begin{proof}
	The proof is in line with that of the continuous level in \cref{lem:bd-xt} and thus we omit the details here.
\end{proof}

\begin{theorem}\label{thm:bound-fes}
Let $\{x_k,v_k,\xi_k\}$ be generated by \cref{eq:imex-qp} with the step size constraint \cref{eq:cond-ak}. Then for all $k\in\mathbb N$, we have $\nm{Ax_k-b}\leq \theta_k/\theta_0 C_0(\alpha_1(\bar{x}),1+\alpha_2(\bar{x}))$ and $\snm{U(x_k)}\leq \theta_k/\theta_0\alpha_3(\bar{x})$.
\end{theorem}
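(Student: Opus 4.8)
The plan is to replicate, at the discrete level, the two continuous-time arguments of \cref{thm:Ax-b-exp} and \cref{thm:est-Uxt}, with the exponential factor $e^{-t}$ replaced everywhere by the discrete decay ratio $\theta_k/\theta_0$ furnished by the contraction estimate \cref{eq:conv-imex-qp-2}. The common starting point is that, after dropping the two nonnegative quadratic terms in \cref{eq:conv-imex-qp-2}, for every fixed target $(\widehat{x},\widehat{\xi})\in\Omega\times\R^r$ and every $k$ there is an index $j(k)\in\{1,\dots,m\}$ attaining the finite minimum, so that
\[
f_{j(k)}(x_k)-f_{j(k)}(\widehat{x})+\dual{\widehat{\xi},Ax_k-b}=\min_{1\leq j\leq m}\pi_j(x_k,\xi_k;\widehat{x},\widehat{\xi})\leq \frac{\theta_k}{\theta_0}C_0(\nm{\widehat{x}},\|\widehat{\xi}\|).
\]
Because $k$ is discrete, this minimum is attained without appeal to \cref{lem:min-fi-dt-fi}, so the discrete argument is in fact slightly simpler than its continuous counterpart.

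For the feasibility estimate I would fix $k$ and, exactly as in \cref{thm:Ax-b-exp}, set $\widetilde{x}_k := x_k-A^+(Ax_k-b)\in\Omega$ and $\widetilde{\xi}_k := (1+\alpha_2(\bar{x}))(Ax_k-b)/\nm{Ax_k-b}$ (and $\widetilde{\xi}_k:=0$ when $Ax_k=b$). \cref{lem:bound-xk-norm} gives $\nm{x_k}\leq R(\alpha_0(\bar{x}))$, whence the same computation as in \cref{eq:st-zx} yields $\nm{\widetilde{x}_k}\leq\alpha_1(\bar{x})$, together with the elementary facts $\|\widetilde{\xi}_k\|\leq 1+\alpha_2(\bar{x})$, $\dual{\widetilde{\xi}_k,Ax_k-b}\geq(1+\alpha_2(\bar{x}))\nm{Ax_k-b}$, $\nm{x_k-\widetilde{x}_k}\leq\nm{Ax_k-b}/\sigma_{\min}^+(A)$, and $\dual{\nabla f_{j(k)}(\widetilde{x}_k),x_k-\widetilde{x}_k}\geq-\alpha_2(\bar{x})\nm{Ax_k-b}$. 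Substituting $(\widehat{x},\widehat{\xi})=(\widetilde{x}_k,\widetilde{\xi}_k)$ into the displayed inequality and lower-bounding its left-hand side by convexity of $f_{j(k)}$ produces $\nm{Ax_k-b}\leq \tfrac{\theta_k}{\theta_0}C_0(\alpha_1(\bar{x}),1+\alpha_2(\bar{x}))$, which is the first claim.

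For the objective-gap estimate I would follow \cref{thm:est-Uxt}. First, \cref{lem:bound-xk-norm} and monotonicity of $C_1$ give $F(x_k)\leq F(x_0)+C_1(R(\alpha_0(\bar{x})))\leq\bm{\alpha}(\bar{x})$, so $x_k\in\mathcal L_F(\bm{\alpha}(\bar{x}))$ and \cref{lem:u0-} is applicable. For the upper bound, applying the displayed inequality with $\widehat{\xi}=0$ and $\widehat{x}$ ranging over $F^{-1}(F^*)\cap\Omega$, and then invoking the representation \cref{eq:Pi-alter}, yields $U(x_k)\leq\tfrac{\theta_k}{\theta_0}C_0(D(\bm{\alpha}(\bar{x})),0)$. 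For the lower bound, \cref{lem:lw-bd-U} together with the feasibility estimate just proved gives $U(x_k)\geq-\alpha_2(\bar{x})\nm{Ax_k-b}\geq-\tfrac{\theta_k}{\theta_0}\alpha_2(\bar{x})C_0(\alpha_1(\bar{x}),1+\alpha_2(\bar{x}))$. Combining the two with the definition of $\alpha_3(\bar{x})$ in \cref{eq:alpha-barx} produces $\snm{U(x_k)}\leq\tfrac{\theta_k}{\theta_0}\alpha_3(\bar{x})$.

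The computations are routine transcriptions of the continuous proofs, so the main point demanding care is logical rather than analytic: the auxiliary target $(\widetilde{x}_k,\widetilde{\xi}_k)$ depends on the current iterate $x_k$, so I must invoke \cref{eq:conv-imex-qp-2} with this $k$-dependent target and then read off the bound at the same index $k$. Since \cref{eq:conv-imex-qp-2} holds for every fixed target and every $k$ simultaneously, no circularity arises, and in contrast to \cref{thm:Ax-b-exp} the auxiliary parametrized ($\tau$) device is unnecessary in the discrete setting.
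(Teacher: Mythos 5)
Your proposal is correct and follows essentially the same route as the paper, which itself states that the proof is the discrete transcription of \cref{thm:Ax-b-exp,thm:est-Uxt} based on \cref{lem:bound-xk-norm} and omits the details; your use of the $k$-dependent target $(\widetilde{x}_k,\widetilde{\xi}_k)$ evaluated at the same index $k$ is just a harmless streamlining of the paper's fixed-index ($k_0$, the analogue of $\tau$) device, and your handling of the upper and lower bounds on $U(x_k)$ matches the intended argument.
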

\begin{proof}
Based on \cref{lem:bound-xk-norm}, the proof is similarly with that of the continuous case in \cref{thm:Ax-b-exp,thm:est-Uxt}. 
\end{proof}

\begin{corollary}\label{lem:theta}
Let $\{x_k,v_k,\xi_k\}$ be generated by the IMEX scheme \cref{eq:imex-qp} with the step size constraint $\alpha_k^2(L\theta_k+\nm{A}^2)= \theta_k\gamma_k$. Then we have
\begin{equation}\label{eq:rate-fes-obj}
	\left\{
	\begin{aligned}
		{}&		\snm{U(x_k)}\leq \alpha_3(\bar{x}) \theta_k/\theta_0,\\
		{}& 		\nm{Ax_k-b}\leq  C_0(\alpha_1(\bar{x}),1+\alpha_2(\bar{x}))\theta_k/\theta_0,
	\end{aligned}
	\right.
\end{equation}
where $\theta_k/\theta_0$ has the decay estimate
\begin{equation}\label{eq:est-thetak}
	\frac{\theta_k}{\theta_0}\leq \min\left\{
	\frac{2\nm{A}}{\sqrt{\gamma_0\theta_0}k}+ \frac{4L\beta_0^2}{\gamma_0k^2}
	,\,
	\frac{4\beta_0^2\nm{A}^2}{\gamma_{\min}\theta_0k^2}+\exp\left(-\frac{k\ln(1+\alpha_{\max})}{2\alpha_{\max}\sqrt{L/\gamma_{\min}}}\right) \right\},
\end{equation}
with $\gamma_{\min}:=\min\{\mu,\gamma_0\},\,\gamma_{\max}:=\max\{\mu,\gamma_0\},\,\alpha_{\max}:=\sqrt{\gamma_{\max}}\big( L +\nm{A}^2\big)^{-1/2}$ and $\beta_0:=2+\sqrt{\alpha_{\max}}$.
\end{corollary}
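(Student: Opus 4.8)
The plan is to split the statement into its two parts. The rate bounds \cref{eq:rate-fes-obj} come essentially for free: the prescribed step size $\alpha_k^2(L\theta_k+\nm{A}^2)=\theta_k\gamma_k$ satisfies the admissibility constraint \cref{eq:cond-ak}, since $L\theta_k\ge L\theta_k/(1+\alpha_k)$ forces $\alpha_k^2(L\theta_k/(1+\alpha_k)+\nm{A}^2)\le\gamma_k\theta_k$. Hence \cref{thm:bound-fes} applies verbatim and gives $\nm{Ax_k-b}\le (\theta_k/\theta_0)C_0(\alpha_1(\bar x),1+\alpha_2(\bar x))$ and $\snm{U(x_k)}\le(\theta_k/\theta_0)\alpha_3(\bar x)$. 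Everything therefore reduces to the scalar decay estimate \cref{eq:est-thetak} for $t_k:=\theta_k/\theta_0$, which is where the real work lies.

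I would first record the elementary facts about the recursion. From \cref{eq:thetak-gamak}, $t_{k+1}=t_k/(1+\alpha_k)$ with $t_0=1$, so $\{t_k\}$ is nonincreasing and $\theta_k\le\theta_0$; solving the step-size relation yields $\alpha_k=\sqrt{\gamma_k\theta_k/(L\theta_k+\nm{A}^2)}$. Since $\gamma_{k+1}=(\gamma_k+\mu\alpha_k)/(1+\alpha_k)$ is a convex combination of $\gamma_k$ and $\mu$, we get $\gamma_k\in[\gamma_{\min},\gamma_{\max}]$; moreover $\gamma_{k+1}\ge\gamma_k/(1+\alpha_k)$ telescopes to the \emph{crucial} lower bound $\gamma_k\ge\gamma_0 t_k$, valid for \emph{every} $\mu\ge0$. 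Finally I would establish the uniform bound $\alpha_k\le\alpha_{\max}$ and, via $\sqrt{1+x}\le 1+\sqrt{x}$, the inequality $\sqrt{1+\alpha_k}+1\le 2+\sqrt{\alpha_{\max}}=\beta_0$, equivalently $\sqrt{1+\alpha_k}-1\ge\alpha_k/\beta_0$.

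The first term of the minimum (the convex-type $\mathcal O(1/k)$ bound) I would extract by feeding $\gamma_k\ge\gamma_0 t_k$ into the step size, giving $\alpha_k\ge \theta_k\sqrt{\gamma_0/\theta_0}\big/\sqrt{L\theta_k+\nm{A}^2}$, and telescoping a suitable power of $1/\theta_k$. Writing $\sqrt{L\theta_k+\nm{A}^2}\le\sqrt{L\theta_k}+\nm{A}$ separates the increment into an $\nm{A}$-dominated part, whose constant contribution produces the leading term $2\nm{A}/(\sqrt{\gamma_0\theta_0}\,k)$, and a residual $\sqrt{L\theta_k}$ part, which (using $t_k\le1$) produces the faster correction $4L\beta_0^2/(\gamma_0 k^2)$. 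For the second term I would instead use $\gamma_k\ge\gamma_{\min}$ and work with $b_k:=1/\sqrt{\theta_k}$, so that $b_{k+1}-b_k\ge (1/\beta_0)\sqrt{\gamma_{\min}/(L\theta_k+\nm{A}^2)}$, and split into two regimes. When $L\theta_k\le\nm{A}^2$ the increment is bounded below by a constant $\asymp\sqrt{\gamma_{\min}}/(\beta_0\nm{A})$, telescoping to $\theta_k/\theta_0\lesssim 4\beta_0^2\nm{A}^2/(\gamma_{\min}\theta_0 k^2)$; when $L\theta_k\ge\nm{A}^2$ one has $\alpha_k\ge\sqrt{\gamma_{\min}/(2L)}$, so writing $\theta_k/\theta_0=\exp(-\sum_{i<k}\ln(1+\alpha_i))$ and using the monotonicity of $x\mapsto\ln(1+x)/x$ (hence $\ln(1+\alpha_i)\ge\alpha_i\ln(1+\alpha_{\max})/\alpha_{\max}$) delivers the exponential factor. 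Adding the two phase estimates gives a single expression valid for all $k$, and taking the minimum with the first bound yields \cref{eq:est-thetak}.

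The routine part is checking that the chosen step size is admissible and that $\gamma_k$ stays in $[\gamma_{\min},\gamma_{\max}]$. The \emph{main obstacle} is the constant bookkeeping in the nonlinear telescoping: the coupled recursion $b_{k+1}-b_k\gtrsim b_k/\sqrt{L+\nm{A}^2 b_k^2}$ does not integrate in closed form, so one must carry out the regime split carefully enough to recover the exact prefactors $2$ and $4\beta_0^2$ and the precise exponential rate $\ln(1+\alpha_{\max})/(2\alpha_{\max}\sqrt{L/\gamma_{\min}})$. A secondary subtlety is the uniform bound $\alpha_k\le\alpha_{\max}=\sqrt{\gamma_{\max}/(L+\nm{A}^2)}$, which (unlike the cruder $\alpha_k\le\sqrt{\gamma_{\max}/L}$) appears to rely on $\theta_k\le1$; I would verify this normalization, or replace $\alpha_{\max}$ by the cruder constant if needed, before invoking it in the $\beta_0$ estimate.
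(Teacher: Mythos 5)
Your reduction of \cref{eq:rate-fes-obj} to \cref{thm:bound-fes} and your treatment of the first branch of \cref{eq:est-thetak} coincide with the paper's proof: the paper likewise uses $\gamma_{k+1}/\gamma_k\geq(1+\alpha_k)^{-1}=\theta_{k+1}/\theta_k$ to get $\gamma_k\geq\gamma_0\theta_k/\theta_0$, divides the increment $\theta_{k+1}-\theta_k=-\sqrt{\gamma_k\theta_k}\,\theta_{k+1}(L\theta_k+\nm{A}^2)^{-1/2}$ by $\theta_k\theta_{k+1}$, splits $\sqrt{L\theta_k+\nm{A}^2}\leq\sqrt{L\theta_k}+\nm{A}$, and telescopes $\theta_k^{-1/2}$ and $\theta_k^{-1}$ using the same identity $\theta_k^{-1/2}-\theta_k^{1/2}\theta_{k+1}^{-1}=(1+\sqrt{1+\alpha_k})(\theta_k^{-1/2}-\theta_{k+1}^{-1/2})$ and the bound $1+\sqrt{1+\alpha_k}\leq\beta_0$. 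Where you genuinely diverge is the second branch: the paper does \emph{not} split into regimes at $L\theta_k=\nm{A}^2$, but telescopes the two terms simultaneously, converting $-\alpha_k$ into $\frac{\alpha_{\max}}{\ln(1+\alpha_{\max})}(\ln\theta_{k+1}-\ln\theta_k)$ via the monotonicity of $x\mapsto\ln(1+x)/x$, arriving at $-c_1\ln\theta_k+c_2\theta_k^{-1/2}\geq k+\mathrm{const}$, and then comparing $\theta_k$ with the candidate $\widehat\theta_k=\theta_{1,k}+\theta_{2,k}$ that is the \emph{sum} of the two decay modes (the factor $2$ in the exponent and the $4\beta_0^2$ come from this sum-comparison, not from halving the iteration budget). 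Your two-phase argument gives the same rates but, as you yourself anticipate, cannot reproduce the stated constants: the transition-index dichotomy costs a factor $2$ in $(k-k_*)$ (yielding roughly $8\beta_0^2\nm{A}^2/(\gamma_{\min}\theta_0k^2)$) and the bound $L\theta_k+\nm{A}^2\leq 2L\theta_k$ in the first phase costs an extra $\sqrt{2}$ inside the exponential rate, so \cref{eq:est-thetak} would only be obtained with slightly worse constants unless you switch to the paper's sum-comparison. Your secondary remark is well taken and in fact exposes an imprecision in the paper itself: since $\theta\mapsto\theta/(L\theta+\nm{A}^2)$ is increasing, $\alpha_k\leq\alpha_{\max}=\sqrt{\gamma_{\max}}(L+\nm{A}^2)^{-1/2}$ only follows when $\theta_0\leq1$; for general $\theta_0$ the correct uniform bound is $\sqrt{\gamma_{\max}\theta_0}(L\theta_0+\nm{A}^2)^{-1/2}$, and $\beta_0$ should be adjusted accordingly.
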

\begin{proof}
Since the identity $\alpha_k^2(L\theta_k+\nm{A}^2)= \theta_k\gamma_k$ satisfies \cref{eq:cond-ak}, from \cref{thm:bound-fes}, it is clear that \cref{eq:rate-fes-obj} holds true. In what follows, let us verify the decay rate \cref{eq:est-thetak}.

In view of \cref{eq:thetak-gamak}, it is not hard to find that $\gamma_{\min}\leq\gamma_k\leq\gamma_{\max}$ and
\[
\frac{\gamma_{k+1}}{\gamma_k}=\frac{1+\mu\alpha_k/\gamma_k}{1+\alpha_k}\geq \frac{1}{1+\alpha_k}=\frac{\theta_{k+1}}{\theta_k} \quad \Longrightarrow \quad \gamma_k\geq \gamma_{0}\theta_k/\theta_0.
\]
Thus, it follows that
\begin{equation}\label{eq:tk}\small
	\begin{aligned}
		\theta_{k+1} - \theta_k =
		-\sqrt{\gamma_k\theta_k}\theta_{k+1}\big( L \theta_k+\nm{A}^2\big)^{-1/2}
		\leq -\sqrt{\gamma_0/\theta_0}\theta_k\theta_{k+1}\big(\sqrt{L\theta_k}+\nm{A}\big)^{-1},
	\end{aligned}
\end{equation}
which gives
\begin{equation}\label{eq:mid-tk}
	\sqrt{L\theta_0/\gamma_0}\left(\theta_k^{-1/2}-\theta_k^{1/2}\theta_{k+1}^{-1}\right) + \nm{A}\sqrt{\theta_0/\gamma_0}\left(\theta_k^{-1}-\theta_{k+1}^{-1}\right)\leq -1.
\end{equation}
Notice that $\alpha_k=\sqrt{\gamma_k\theta_k}\big( L \theta_k+\nm{A}^2\big)^{-1/2}\leq \alpha_{\max}$ and it is easy to obtain
\[\small
\theta_k^{-1/2}-\theta_k^{1/2}\theta_{k+1}^{-1}= (1+\sqrt{1+\alpha_k})\left(\theta_k^{-1/2}-\theta_{k+1}^{-1/2}\right)\geq \beta_0\left(\theta_k^{-1/2}-\theta_{k+1}^{-1/2}\right).
\]
Therefore, from \cref{eq:mid-tk} we get
\[
\sqrt{L\theta_0/\gamma_0}\beta_0\left(\theta_{k+1}^{-1/2}-\theta_k^{-1/2}\right) + \nm{A}\sqrt{\theta_0/\gamma_0}\left(\theta_{k+1}^{-1}-\theta_{k}^{-1}\right)\geq 1.
\]
Define $\phi(t):=\sqrt{L\theta_0/\gamma_0}\beta_0t^{-1/2} + \nm{A}\sqrt{\theta_0/\gamma_0}t^{-1}$ for all $t>0$. Then we have $\phi(\theta_k)\geq k+	\sqrt{L/\gamma_0}\beta_0+ \nm{A}/\sqrt{\theta_0\gamma_0}$.
Introduce $\widehat{\theta}_k =\theta_{1,k}+\theta_{2,k} $ with
\[
\theta_{1,k}: = \frac{4L\theta_0\beta_0^2}{\big(2\sqrt{L}\beta_0+\sqrt{\gamma_0}k\big)^2},\quad
\theta_{2,k}: = \frac{2\theta_0\nm{A}}{2\nm{A}+\sqrt{\gamma_0\theta_0}k}.
\]
We claim that $\phi(\theta_k)\geq \phi(\widehat{\theta}_k)$ for all $k\in\mathbb N$. Since $\phi(\cdot)$ is monotonously decreasing, we conclude that
\begin{equation}\label{eq:thetak1}
	\frac{	\theta_k}{\theta_0}\leq\frac{ \widehat{\theta}_k}{\theta_0}\leq
	\frac{2\nm{A}}{\sqrt{\gamma_0\theta_0}k}+ \frac{4L\beta_0^2}{\gamma_0k^2}.
\end{equation}

On the other hand, since $\gamma_k\geq\gamma_{\min}$, the estimate \cref{eq:tk} becomes
\[
\theta_{k+1} - \theta_k \leq
-\sqrt{\gamma_{\min}}\sqrt{\theta_k}\theta_{k+1}\big(\sqrt{L\theta_k}+\nm{A}\big)^{-1}.
\]
Similarly, using the above argument leads to
\[
\frac{\theta_k}{\theta_0}\leq
\frac{4\beta_0^2\nm{A}^2}{\gamma_{\min}\theta_0k^2}+\exp\left(-\frac{k\ln(1+\alpha_{\max})}{2\alpha_{\max}\sqrt{L/\gamma_{\min}}}\right).
\]
Combining this with \cref{eq:thetak1}, we obtain \cref{eq:est-thetak} and finish the proof.
\end{proof}

\section{Numerical Results}
\label{sec:num}
In this section, we conduct several numerical experiments to demonstrate the practical performance of \cref{algo:amd-qp}, which is denoted as AMPD-QP for short. For the step size constraint, we choose the simple one $\alpha_k^2(L\theta_k+\nm{A}^2)=\gamma_k\theta_k $, which leads to an explicit formula $\alpha_k=\sqrt{\gamma_k\theta_k}/\sqrt{L\theta_k+\nm{A}^2}$.
\subsection{Asymptotic behavior of the dynamical system}
To provide an illustrative understanding on \cref{eq:ampd} flow, we examine its asymptotic behavior by applying the discrete algorithm AMPD-QP to some simple two dimensional bi-objective problems with a single linear equality constraint, including nonconvex, convex and strongly convex objectives.

For each problem, the initial settings for AMPD-QP are $v_0=(1,1)^\top$ and $\xi_0= 1$.
The parameters $\gamma_0$ and $\theta_0$ are randomly chosen from $(0,10]$. We consider 100 samples of the initial point $x_0\in \Bbb{R}^2$ that is randomly generated from the box $[-10,10]^2$.
In \cref{fig:1,fig:1-2,fig:1-3}, we report the numerical results of \cref{eg:eg1,eg:eg2,eg:eg3}, including the iterate trajectory, the approximate Pareto front and the average residual of all samples. Here, the residual terms contain the feasibility violation $\nm{Ax_k-b}$, the objective gap $\snm{U(x_k)}$ (cf.\cref{eq:obj-gap}) and the KKT residual (cf.\cref{eq:kkt-res}). We observe that (i) the trajectories approach to the Pareto set very well, (ii) the residual terms decrease very smoothly, and (iii) for \cref{eg:eg1}, the strong convexity implies faster rate of convergence than that of the other two examples. From this, we conclude that our dynamical \cref{eq:ampd} flow possesses good efficiency and stability for finding approximate Pareto solutions.

\begin{example}\label{eg:eg1}
	This first problem is strongly convex and taken from \cite{binh1996evolution}
	\[
	\min_{x=(x_1,x_2)\in\R^2}\left\{ x_1^2 + x_2^2, (x_1-5)^2 + (x_2-5)^2\right\} \quad \st\, x_1 - x_2 = 1.
	\]
	The Pareto optimal set is $	\mathcal P=\mathcal P_w=\big\{x=(x_1,x_1-1)\in\R^2:\,1/2\leq x_1\leq 11/2\big\}$.
\end{example}
\begin{figure}[H]
	\centering
	\begin{subfigure}[t]{0.3\textwidth}
		\centering
		\includegraphics[width=\textwidth]{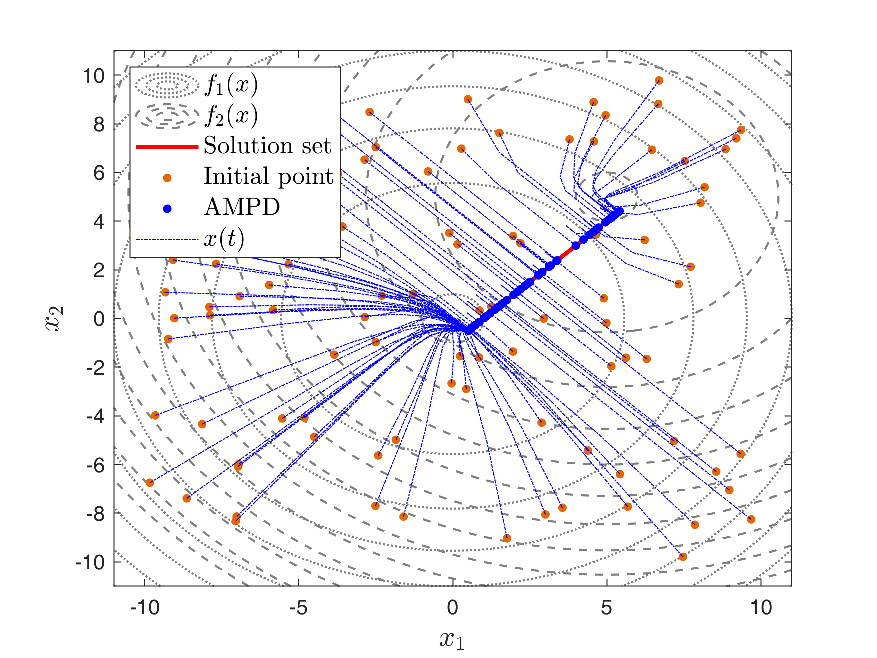}
	\end{subfigure}
	\hfill
	\begin{subfigure}[t]{0.3\textwidth}
		\centering
		\includegraphics[width=\textwidth]{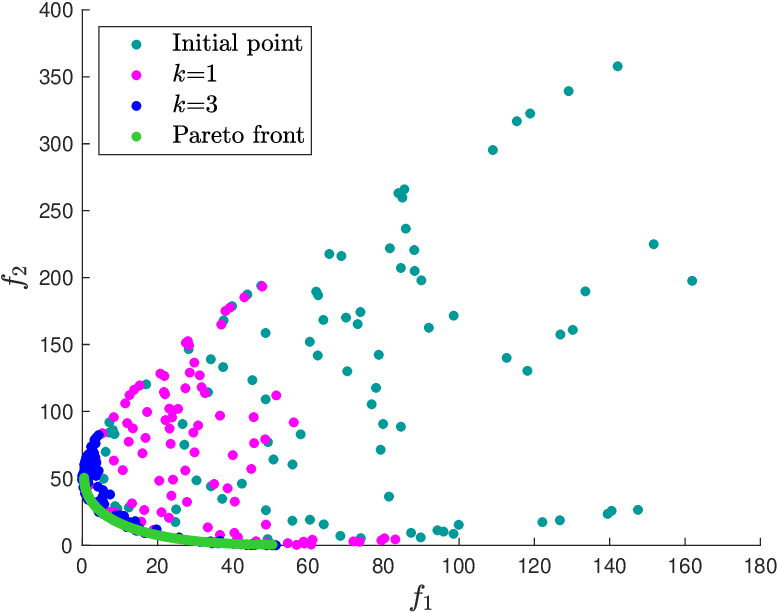}
	\end{subfigure}
	\hfill
	\begin{subfigure}[t]{0.3\textwidth}
		\centering
		\includegraphics[width=\textwidth]{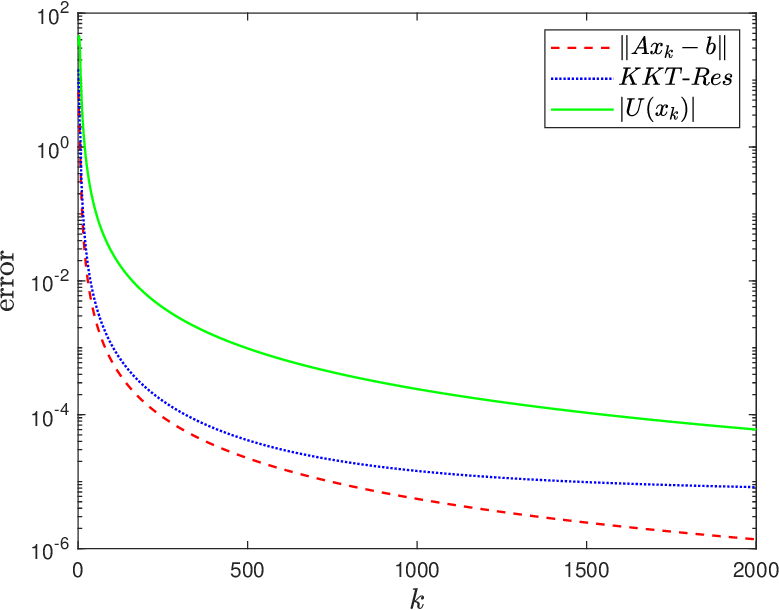}
	\end{subfigure}
	\caption{Numerical results of \cref{eg:eg1}. From left to right: the iterate trajectory, the approximate Pareto front at different step, the average residual.}
	\label{fig:1}
\end{figure}

\begin{example}\label{eg:eg2}
	The second problem reads as
	\[
	\min_{x=(x_1,x_2)\in\R^2} \left\{f_1(x),f_2(x)\right\} \quad\st\,x_1 + x_2 = 1,
	\]
	where the two convex objectives are  \cite[Section 5.2]{Sonntag2024a}
	\[
	f_i(x)=\log\sum_{j=1}^4\exp\left(\big\langle a_j^{(i)}, x\big\rangle-b_j^{(i)}\right),\quad i = 1,2,
	\]
	with the same settings for $a_j^{(i)}$ and $b_j^{(i)}$ given in \cite[Eq.(5.1)]{Sonntag2024a}. The Pareto optimal set is $	P = \left\{ x=(x_1,1-x_1) \in \R^2 : -1/2\leq x_1\leq 3/2 \right\}$.
\end{example}
\begin{figure}[H]
	\centering
	\begin{subfigure}[t]{0.3\textwidth}
		\centering
		\includegraphics[width=\textwidth]{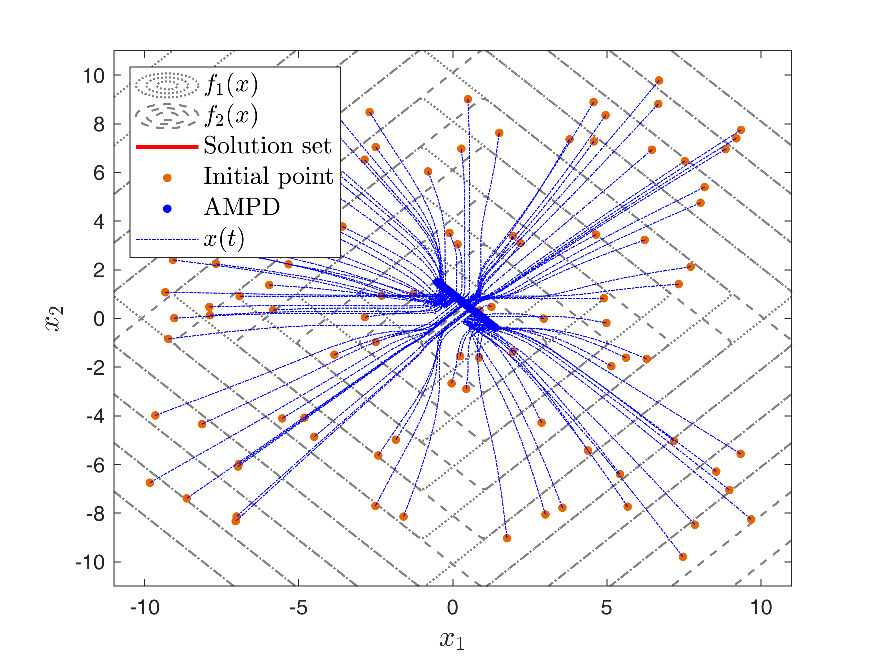}
	\end{subfigure}
	\hfill
	\begin{subfigure}[t]{0.3\textwidth}
		\centering
		\includegraphics[width=\textwidth]{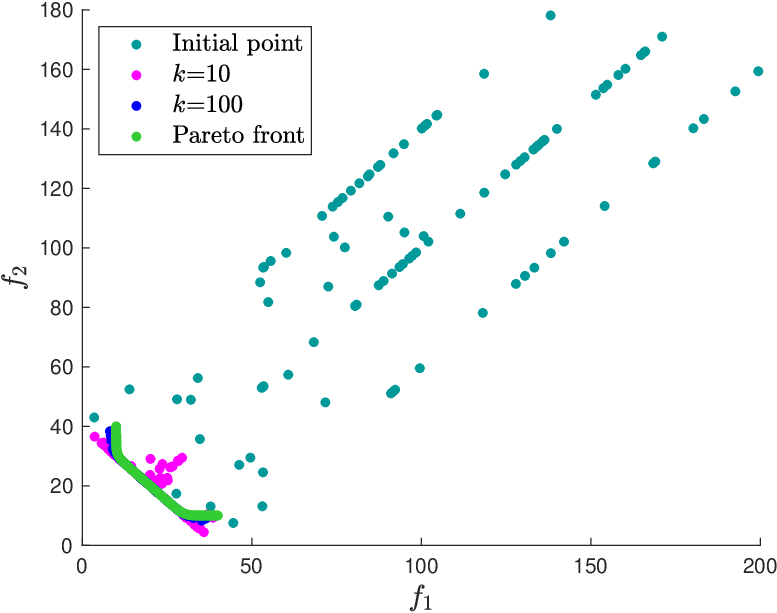}
	\end{subfigure}
	\hfill
	\begin{subfigure}[t]{0.3\textwidth}
		\centering
		\includegraphics[width=\textwidth]{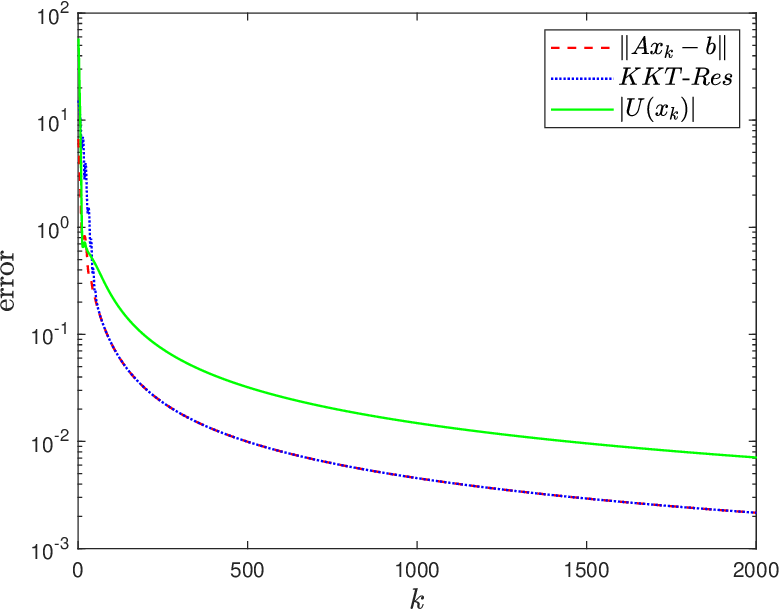}
		\end{subfigure}
		\caption{Numerical results of \cref{eg:eg2}. From left to right: the iterate trajectory, the approximate Pareto front at different step, the average residual.}
		\label{fig:1-2}
	\end{figure}
	\begin{example}\label{eg:eg3}
		The third problem is nonconvex \cite{Witting2012}
		\[
		\min_{x=(x_1,x_2)\in\R^2} \left\{ f_1(x), f_2(x)\right\} \quad \st\, x_1 - x_2 = 1,
		\]
		where
		\[
		f_{i}(x)=\frac{1}{2}\left(\sqrt{1+\snm{\dual{a,x}}^2}+\sqrt{1+\snm{\dual{b,x}}^2}+\dual{c_i,x}\right)+\lambda\exp{(-\snm{\dual{b,x}}^2)}, \quad i=1,2,
		\]
		with $a = (1,1)^\top,\,b = (1,-1)^\top,\,c_i = ((-1)^{i+1},-1)^\top$ and $\lambda=0.6$.
		The Pareto optimal set of this problem is $	P=\left\{(1/2,-1/2)\right\}$.
	\end{example}
	\begin{figure}[H]
		\centering
		\begin{subfigure}[t]{0.3\textwidth}
			\centering
			\includegraphics[width=\textwidth]{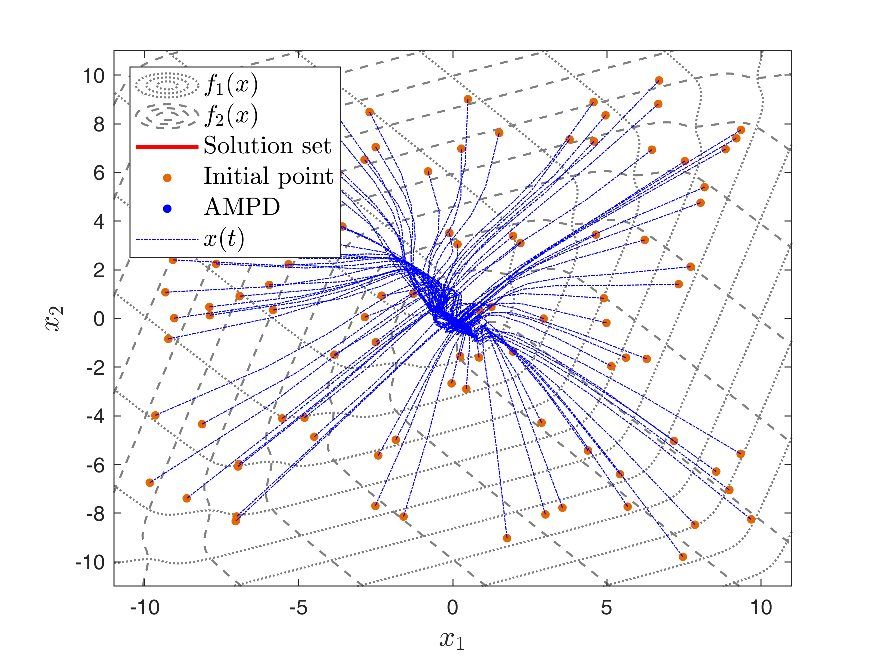}
		\end{subfigure}
		\hfill
		\begin{subfigure}[t]{0.3\textwidth}
			\centering
			\includegraphics[width=\textwidth]{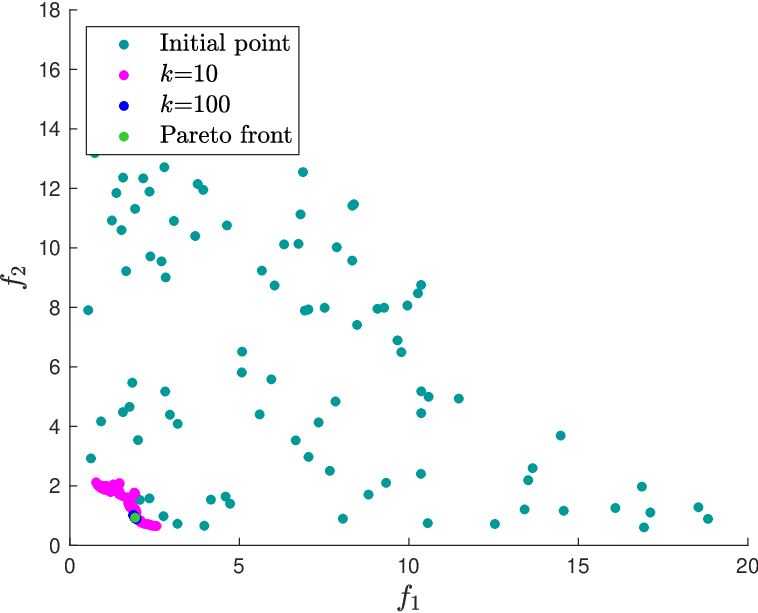}
		\end{subfigure}
		\hfill
		\begin{subfigure}[t]{0.3\textwidth}
			\centering
			\includegraphics[width=\textwidth]{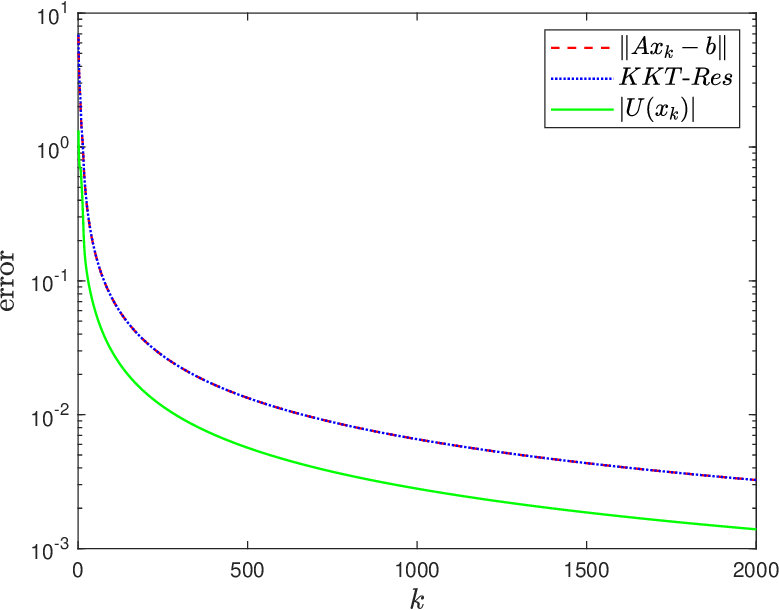}
		\end{subfigure}
		\caption{Numerical results of \cref{eg:eg3}. From left to right: the iterate trajectory, the approximate Pareto front at different step, the average residual.}
		\label{fig:1-3}
	\end{figure}

	\subsection{Comparison with existing methods}
	Now, we proceed to compare our AMPD-QP with existing methods. As noted in the introduction, aside from the augmented Lagrangian algorithm for multi-objective optimization (ALAMO) in \cite{cocchi2020} and the multiple reduced gradient (MRG) algorithm in \cite{el2018multiple,el2018new}, very few works have addressed the methods of solving \cref{eq:lcmop}. It should be emphasized, however, that MRG is based on a conventional basic variable splitting technique and is not a primal-dual type method. For this reason, we perform a series of numerical experiments to evaluate the performance of our AMPD-QP against ALAMO.

	In \cref{tab:test-prob}, we list a set of test problems from the literature, including convex and strongly convex (s.c.) objectives. For each problem, the constraint matrix $A\in\R^{m\times n}$ and the right-hand side $b\in\R^m$ are generated randomly with entries in $[-1,1]$. Further, we choose 100 starting points from the sample region given in \cref{tab:test-prob}.
	
	\begin{table}[H]
		\centering
		\caption{Test problems }
		\label{tab:test-prob}
		\begin{tabular}{ccccccc}
			\hline
			\text{Problem} & $n$ & $m$ & $r$ & \text{Sample region}& \text{Convexity} &\text{Ref.} \\
			\hline
			\text{BK1} & 2  & 2 & 1 & $[-10,10]^n$ & s.c.& \cite{binh1996evolution}\\
			\text{SPb1} & 2 & 2 & 1 & $[-200,200]^n$ &  s.c. & \cite{sefrioui2000nash}\\
			\text{TRIDIA1} & 3 & 3& 2 & $[-10,10]^n$ &  convex & \cite{toint1983test}\\
			\text{LTY1}$_{20}$&   100  & 3 & 20 & $[-1,1]^n$ &s.c. &\cite{luo_accelerated_2025}\\
			\text{LTY1}$_{50}$&   100  & 3 & 50 & $[-1,1]^n$ &s.c. &\cite{luo_accelerated_2025}\\
			\text{ZLT1}$_{20}$& 100 & 3 & 20 & $[-1,1]^n$ &s.c. &{\cite{zitzler2001spea2}}\\
			\text{ZLT1}$_{50}$& 100 & 3 & 50 & $[-1,1]^n$ &s.c. &{\cite{zitzler2001spea2}}\\
			\hline
		\end{tabular}
	\end{table}
	
	For AMPD-QP, we use the setting:
	$v_0=(1,...,1)^\top\in \R^n,\xi_0=(1,....,1)^\top\in \R^{r}$, and the parameters $\gamma_0$ and $\theta_0$ are initialized randomly in $(0,10]$.
	Since ALAMO is designed for nonlinear inequality constraint $g(x)\leq 0$, we reformulate the equality constraint $Ax=b$ as two opposite inequality constraints $Ax\leq b$ and $-Ax\leq -b$. Then for ALAMO, we use the parameter setting: $\tau_0 = 1$, $\alpha = 2$, $\mu^0 = (1,\ldots,1)^\top \in \mathbb{R}^{2r}$, and $\sigma = 0.9$. Note that in each iteration, to update the primal sequence, ALAMO has to solve an unconstrained multiobjective optimization subproblem. Following \cite{cocchi2020}, we choose the multi-objective steepest descent algorithm with Armijo-type line search \cite{fliege_steepest_2000} as an inner solver, and consider two different tolerances ${\rm tol}=10^{-4}$ and ${\rm tol}=10^{-5}$, also with the maximum number of iterations $\ell_{\max} = 8000$. For both two methods, the stopping criterion is ${\rm KKT}(x_k,\xi_k)\leq  10^{-3}$.
	
	\begin{table}[H]
		\centering
		\caption{Performances of AMPD-QP and ALAMO}
		\label{tab:test-1}
		\renewcommand{\arraystretch}{1.1}
		\begin{tabular}{l|cc|cc|cc}
			\toprule
			\multirow{3}{*}{\text{Problem}}
			& \multicolumn{2}{c|}{\multirow{2}{*}{\text{AMPD-QP}}}
			& \multicolumn{4}{c}{\text{ALAMO}} \\
			\cline{4-7}
			& \multicolumn{2}{c|}{}
			& \multicolumn{2}{c|}{$\text{tol}=10^{-4}$}
			& \multicolumn{2}{c}{$\text{tol}=10^{-5}$} \\
			\cline{2-7}
			& \text{Iter} & \text{Time}
			& \text{Iter} & \text{Time}
			& \text{Iter} & \text{Time} \\
			\hline
			\text{BK1}   & 98    & 0.13   & \textbf{87}      & \textbf{0.07}    & 42044  & 71.76 \\
			\text{SPb1}   & \textbf{382}   & \textbf{0.59}   & 955     & 0.67   & 87500  & 138.61 \\
			\text{TRIDIA1}   & 2753   & 3.81   & \textbf{768}     & \textbf{0.66}   & 58428  & 63.88 \\
			\text{LTY1}$_{20}$  & \textbf{3072}  & \textbf{4.87}  & 6752    & 18.75 & 13495   & 141.02 \\
			\text{LTY1}$_{50}$  & \textbf{4905}  & \textbf{8.83}  & 8908   & 26.79 & 18711 & 248.93 \\
			\text{ZLT1}$_{20}$ & \text{772}  & \text{1.21} & \textbf{646}   & \textbf{1.14} & 21456 & 326.50 \\
			\text{ZLT1}$_{50}$ & \textbf{1152}  & \textbf{2.10} & 2447 & 7.70 & 27513 & 458.43 \\
			\bottomrule
		\end{tabular}
	\end{table}
	
	In \cref{tab:test-1}, we report the averaged number of iterations and the CPU time (in second) of all the sample points. As we can see, ALAMO performs well for low dimension problems but is not competitive as AMPD-QP for high dimension cases. Moreover, for ALAMO, the tolerance for the inner problem has dramatic influence on the overall performance. To further validate the effectiveness of AMPD-QP, we provide more tests on the ZLT1 problem with larger $n$ and $r$ and report the numerical results in \cref{table3}, where for ALAMO, the tolerance for the inner problem is tol = $10^{-4}$. Moreover, in \cref{fig:3}, we plot the approximate Pareto fronts of AMPD-QP and ALAMO for some selected problems. It can be seen that both two methods provide good approximations but ours have better distributions for the Pareto front.
	
	\begin{table}[H]
		\centering
		\caption{Performances of AMPD-QP and ALAMO for {ZLT1}}
		\label{table3}
		\begin{tabular}{ccc cc cc}
			\toprule
			\multirow{2}{*}{${n}$} & \multirow{2}{*}{${m}$} & \multirow{2}{*}{${r}$}  & \multicolumn{2}{c}{\text{AMPD-QP}} & \multicolumn{2}{c}{\text{ALAMO}} \\
			\cmidrule(lr){4-5} \cmidrule(lr){6-7}
			& & & \text{Iter} & \text{Time} & \text{iter} & \text{time} \\
			\midrule
			100 & 3 & 50 & \textbf{1152} & \textbf{2.10}  & 2447 & 7.70  \\
			200 & 3 & 20 & \textbf{1058} & \textbf{2.06}  & 1874 & 6.85  \\
			200 & 3 & 50 & \text{1453} & \textbf{3.24}  & \textbf{1385} & 13.58 \\
			500 & 3 & 20 & \textbf{1733} & \textbf{3.93}  & 4004 & 28.37 \\
			500 & 3 & 50 & \textbf{2323} & \textbf{6.33}  & 2822 & 24.96 \\
			\bottomrule
		\end{tabular}
	\end{table}
	
	\begin{figure}[H]
		\centering
		\begin{subfigure}[t]{0.25\textwidth}
			\centering
			\includegraphics[width=\textwidth]{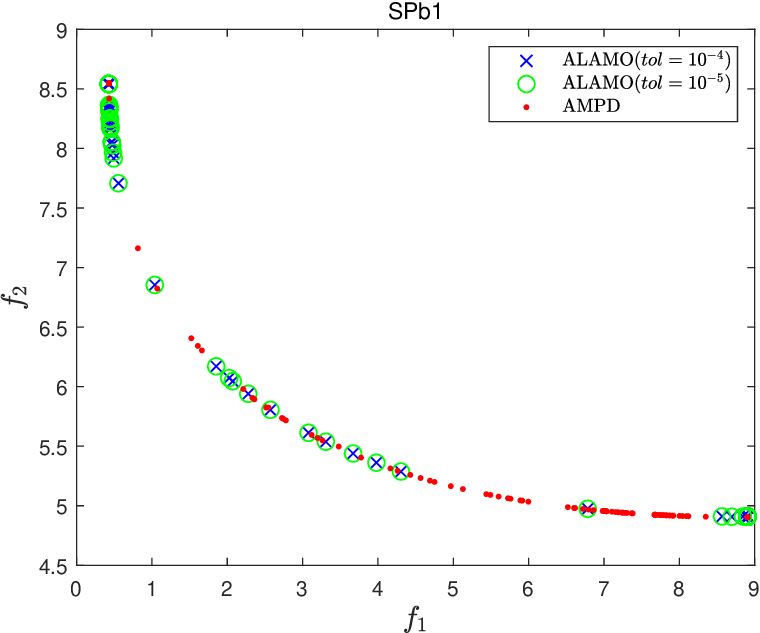}
		\end{subfigure}
		\hskip0.8cm
		\begin{subfigure}[t]{0.3\textwidth}
			\centering
			\includegraphics[width=\textwidth]{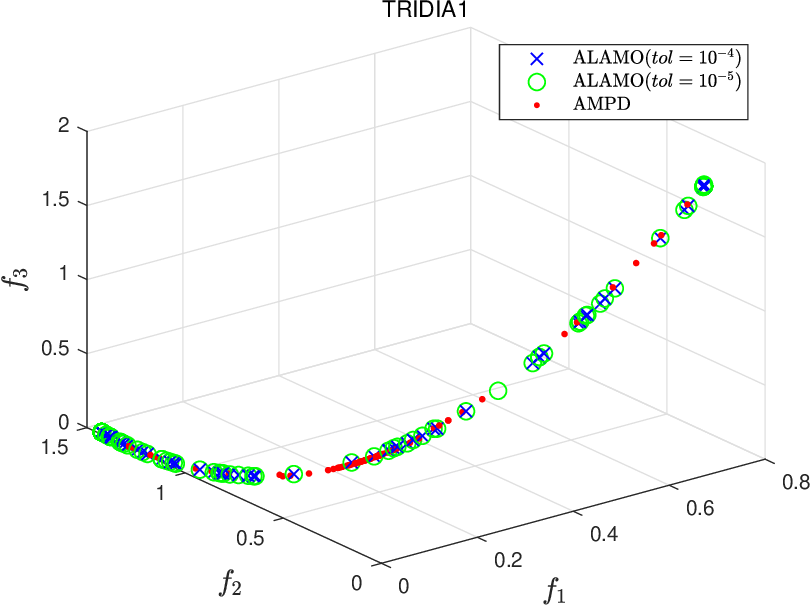}
		\end{subfigure}
		\vskip0.5cm
		\begin{subfigure}[t]{0.3\textwidth}
			\centering
			\includegraphics[width=\textwidth]{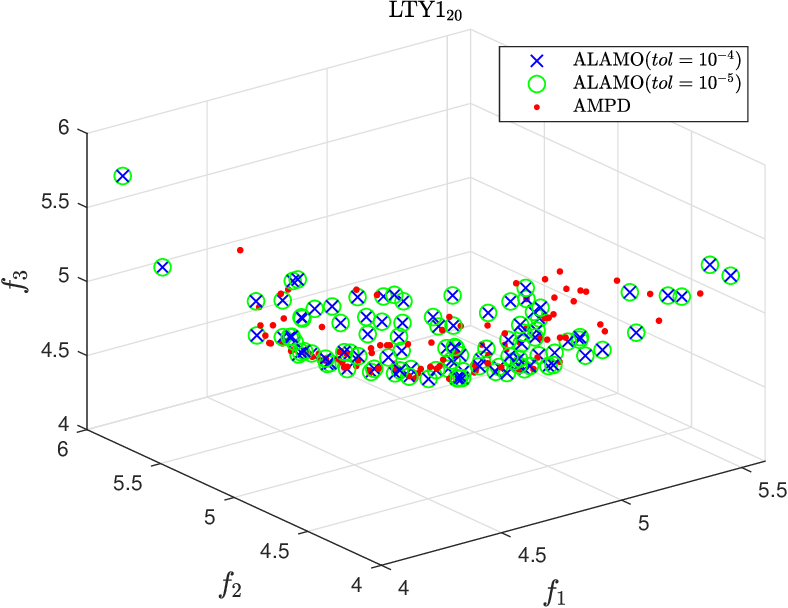}
		\end{subfigure}
		\hskip0.8cm
		\begin{subfigure}[t]{0.3\textwidth}
			\centering
			\includegraphics[width=\textwidth]{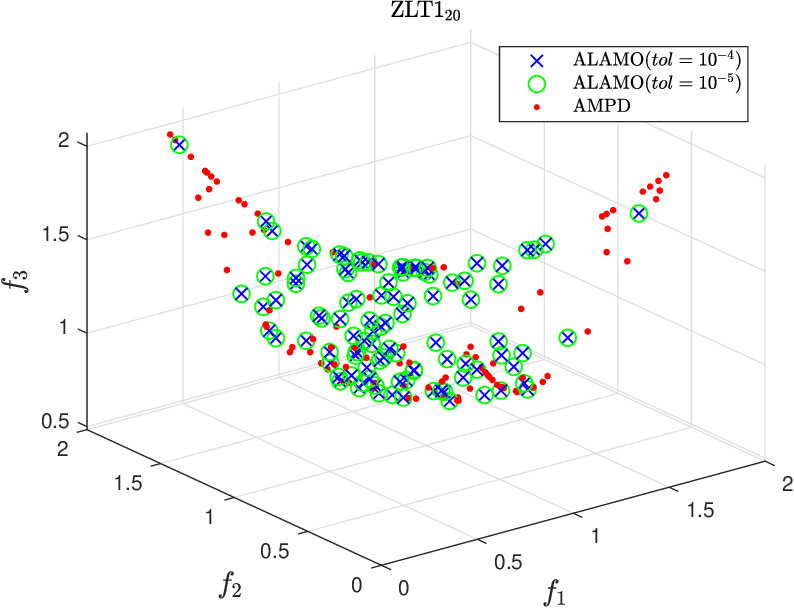}
		\end{subfigure}
		\caption{Approximate Pareto fronts of AMPD-QP and ALAMO for selected problems. }
		\label{fig:3}
	\end{figure}

	\section{Conclusion}
	\label{sec:conclu}
	In this work, we develop a novel continuous-time primal-dual framework for \cref{eq:lcmop}.  Based on a new merit function, we introduce the concept of an $\epsilon$-approximation solution to the weakly Pareto optimality. We then propose an accelerated multiobjective  primal-dual flow and establish the exponential decay via the Lyapunov analysis. In addition, we consider an implicit-explicit discretization scheme and prove that both the feasibility violation and the objective gap have the same rates $\mathcal{O}(1/k)$ and $\mathcal O(1/k^2)$ respectively for the convex case and the strongly convex case.
	
	It is worth noting that (cf.\cref{rem:x-v-xi}), the well-posedness (existence and uniqueness) of the solution to \cref{eq:ampd} requires rigorously investigations. It would also be of interest to prove the strong or weak convergence of the continuous trajectory together with its discrete counterpart. Additionally, the extension to the composite case with smooth objectives and nonsmooth objectives deserves further study. As mentioned in \cite[Section 8]{luo_accelerated_2025}, even if the multiobjective proximal gradient method \cite{tanabe2019} and the accelerated variant \cite{Tanabe2023a} have been proposed, it is still an open question to obtain the corresponding proximal gradient type methods from the continuous-time approach, as existing dynamical models \cite{attouch2015multiibjective,boct2026inertial,luo_accelerated_2025,Sonntag2024a,Sonntag2024} mainly focus on smooth objectives. We left these interesting topics as our future works.

\bibliographystyle{abbrv}

\end{document}